\numberwithin{equation}{section}
\newtheorem{theorem}{Theorem}[section]
\newtheorem{lemma}[theorem]{Lemma}
\newtheorem{proposition}[theorem]{Proposition}
\theoremstyle{definition}
\newtheorem{definition}[theorem]{Definition}
\newtheorem{remark}[theorem]{Remark}
\newtheorem{def/prop}[theorem]{Definition/Proposition}
\newcommand{\mf}{\mathfrak}
\newcommand{\cA}{\mathcal{A}}
\newcommand{\cM}{\mathcal{M}}
\newcommand{\cO}{\mathcal{O}}
\newcommand{\ZZ}{\mathbb{Z}}
\newcommand{\cD}{\mathcal{D}}
\newcommand{\HH}{\mathbb{H}_{t,q}}
\renewcommand{\H}{\operatorname{H}} 
\newcommand{\dS}{\Big/\!\!\Big/} 
\newcommand{\CC}{\mathbb{C}}
\newcommand{\tr}{\mathrm{tr}}
\newcommand{\rightloop}{%
           \mathrel{\raisebox{.1em}{%
           \reflectbox{\rotatebox[origin=c]{-90}{$\circlearrowright$}}}}}
\renewcommand{\d}{\partial}
\title{The Harish-Chandra isomorphism for quantum $GL_2$} 
\author{Martina Balagovi\'c and David Jordan}
\begin{document}
\begin{abstract}
We construct an explicit Harish-Chandra isomorphism, from the quantum Hamiltonian reduction of the algebra $\cD_q(GL_2)$ of quantum differential operators on $GL_2$, to the spherical double affine Hecke algebra associated to $GL_2$.  The isomorphism holds for all deformation parameters $q\in \CC^\times$ and $t\neq \pm i$, such that $q$ is not a non-trivial root of unity.  We also discuss its extension to this case.
\end{abstract}

\maketitle
\tableofcontents
\section{Introduction}

A fundamental construction in the geometric representation theory of a reductive algebraic group $G$ is Harish-Chandra's restriction homomorphism: given a conjugation-invariant differential operator on the Lie algebra, $\mathfrak{g}$, of $G$, we consider its restriction to the Cartan subalgebra $\mathfrak{h}\subset \mathfrak{g}$.  The restricted differential operator is invariant for the Weyl group $W$, but it may develop poles along hyperplanes stabilized by reflections in $W$.  Conjugating by the discriminant eliminates these poles, and produces a regular, $W$-equivariant differential operator on $\mathfrak{h}$.  This procedure leads to a homomorphism of algebras,
$$HC: D(\mathfrak{g})^G \to D(\mf{h})^W,$$
from the algebra $D(\mf{g})^G$ of ad-invariant differential operators on $\mf{g}$, to the algebra $D(\mf{h})^W$ of $W$-invariant differential operators on the Cartan subalgebra $\mf{h}$.  

Levasseur and Stafford's theorem \cite{LS} states that the restriction homomorphism descends to an isomorphism, called the Harish-Chandra isomorphism,
$$HC: D(\mathfrak{g})\!\!\!\underset{ker(\epsilon)}{\dS}\!\!\! G \xrightarrow{\cong} D(\mf{h})^W.$$
Here, $\epsilon:U\mathfrak{g}\to \CC$ denotes the ``co-unit" homomorphism, sending the generating subspace $\mathfrak{g}\subset U\mathfrak{g}$ to zero. More generally, for any ad-equivariant two-sided ideal $\mathcal{I}\subset U\mf g$, we denote by
$$D(\mathfrak{g})\underset{\mathcal{I}}{\dS} G:=\left(D(\mathfrak{g})\Big/D(\mathfrak{g})\cdot\hat{\mu}^\#(\mathcal{I})\right)^G,$$
the quantum Hamiltonian reduction along the quantum moment map, $\hat{\mu}^\#: U\mf{g}\to D(\mathfrak{g})$, corresponding to the derivative of the adjoint action.  The word ``quantum" here refers to the fact that $D(\mathfrak{g})$ and $U\mathfrak{g}$ quantize the standard symplectic structures on $T^*\mathfrak{g}$ and $\mathfrak{g}^*$, respectively, and that $\hat{\mu}^\#$ quantizes a classical moment map $\mu: T^*\mathfrak{g}\to \mathfrak{g}^*$. 

Specializing to the case $G=GL_N, \mf{g}=\mf{gl}_N, \mf{h}=\mathbb{C}^N, W=S_N$, the algebra $D(\mf{h})^W$ admits a canonical deformation to the \emph{spherical rational Cherednik algebra}, denoted $e\H_c(G)e$, and depending on a parameter $c\in\mathbb{C}$.  In \cite{EG}, Etingof and Ginzburg  extended Levasseur-Stafford's theorem, by defining a deformed Harish-Chandra isomorphism,  
$$HC_c:  D(\mf g)\underset{\mathcal{I}_c}{\dS} G \xrightarrow{\cong} e\H_c(G)e.$$
Here $\mathcal{I}_c\subset U\mf g$ is a certain ad-invariant, two-sided ideal quantizing the orbit $o_c$ of traceless matrices $X\in \mathfrak{sl}_N$ such that $X+c\cdot \operatorname{Id}$ has rank at most one.  The classical Hamiltonian reduction of $T^*\mathfrak{gl}_N$ along $o_c$ gives rise to the celebrated Calogero-Moser variety $\mathfrak{CM}_N$.

Each of the ingredients of the deformed Harish-Chandra homomorphism $HC_c$ admits a multiplicative deformation: 
\begin{itemize}
\item The universal enveloping algebra $U\mathfrak{g}$ $q$-deforms to a subalgebra $\cO_q(G)$ of the quantized universal enveloping algebra $U_q\mathfrak{g}$.  The algebra $\cO_q(G)$ is, moreover, a quantization of $G$ with its Semenov-Tian-Shansky Poisson bracket.  On the  other hand, the spherical rational Cherednik algebra $e\H_c(G)e$ $q$-deforms to the spherical double affine Hecke algebra, denoted $e\HH e$, which in turn quantizes the Ruijsenaars-Schneider integrable system $\mathfrak{RS}_N$.  We can encode these basic inputs and  outputs in the following ``diamonds of degenerations":
\begin{equation}\label{Uqdiag}\xymatrix@!C@R=30pt{ & \cO_q(G) \ar@{~>}[dl]_{\underset{\textrm{limit $q\to 1$}}{\textrm{\scriptsize quasi-classical }}} \ar@{~>}[dr]^{\underset{\textrm{limit $q\to 1$}}{\textrm{\scriptsize classical}}} & \\
                     U\mathfrak{g} \ar@{~>}[dr]_{\underset{\textrm{graded}}{\textrm{\scriptsize associated }}} && O(G) \ar@{~>}[dl]^{\underset{\textrm{degeneration}}{\textrm{\scriptsize rational }}}\\\
                     &  S(\mathfrak{g}) & } \qquad \xymatrix@!C@C=20pt@R=30pt{ & e\HH e \ar@{~>}[dl]_{\underset{\textrm{limit $q\to 1$}}{\textrm{\scriptsize quasi-classical }}} \ar@{~>}[dr]^{\underset{\textrm{limit $q\to 1$}}{\textrm{\scriptsize classical}}} & \\
                     e\H_ce \ar@{~>}[dr]_{\underset{\textrm{graded}}{\textrm{\scriptsize associated }}} && \mathfrak{RS}_N \ar@{~>}[dl]^{\underset{\textrm{degeneration}}{\textrm{\scriptsize rational }}}\\\
                     &  \mathfrak{CM}_N & }
                     \end{equation}
\end{itemize}

Each of the elements of Hamiltonian reduction also $q$-deform:  these basic inputs to Hamiltonian reduction are organized into the left hand side of Figure \ref{prism-diag}.  We have:
\begin{itemize}
\item   The algebra $\cD(\mathfrak{g})$ $q$-deforms to the algebra $\cD_q(G)$, of quantum differential operators on $G$.  The algebra $\cD_q(G)$ is, moreover, a quantization of the ``multiplicative cotangent bundle", $G\times G$.
\item The map $\hat{\mu}^\#$.  This is itself a $q$-deformation of a multiplicative moment map $\widetilde{\mu}:G\times G\to G$.
\item Each ideal $\mathcal{I}_c\subset U\mathfrak{g}$ $q$-deforms to an ad-invariant, two-sided ideal $\mathcal{I}_t\subset U_q\mathfrak{g}$, for $t\in\CC^\times$; each $\mathcal{I}_t$, in turn, quantizes an orbit $O_t$, consisting of special linear matrices $X\in SL_N$ such that $X+t\cdot\operatorname{Id}$ is rank one.
\item The quantum Hamiltonian reduction $\cD_q(G)\dS_{\!\!\!\mathcal{I}_t}U_q\mathfrak{g}$ is constructed analogously, both to the quantum Hamiltonian reduction of $\cD(\mathfrak{g})$, and to the multiplicative Hamiltonian reduction of $G\times G$.
\end{itemize}

It is thus a natural and important problem to construct the $q$-analog of the Harish-Chandra isomorphism,
\begin{equation*}HC_{q,t}: \cD_q(G)\dS_{\!\!\mathcal{I}_t} \!\!U_q\mathfrak{g} \xrightarrow{\cong} e\HH e,\end{equation*}
which would give rise to the degenerations on the right hand-side of Figure \ref{prism-diag}:

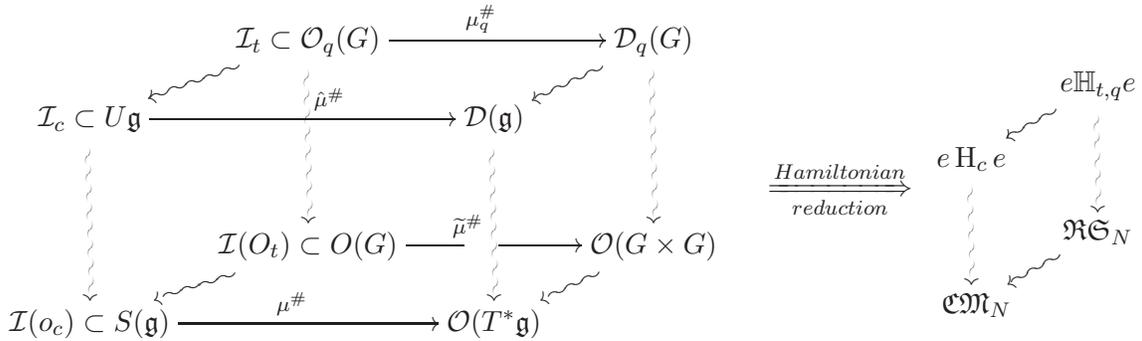
\begin{figure}[h]
\begin{equation*}
\label{muqdiag}
\begin{tabular}{m{2.5in} m{.5in} m{1.75in}}
\xymatrix@=12pt{ & \mathcal{I}_t\subset \cO_q(G) \ar@{~>}[dl]\ar@{~>}[ddd] \ar@{->}[rr]^{\mu^\#_q}&&\mathcal{D}_q(G) \ar@{~>}[dl] \ar@{~>}[ddd]&\\
                    \mathcal{I}_c\subset U\mathfrak{g} \ar@{~>}[ddd] \ar@{->}[rr]^{\phantom{===}\,\,\,\,\,\,\hat\mu^\#}&& \cD(\mathfrak{g}) \ar@{~>}[ddd]&&\\
                     &&&&\\
                     &\mathcal{I}(O_t) \subset O(G) \ar@{~>}[dl] \ar@{->}[rr]^{\!\!\!\!\!\!\!\widetilde{\mu}^\#}|{\phantom{==}} &&\cO(G\times G) \ar@{~>}[dl]\\
                      \mathcal{I}(o_c) \subset S(\mathfrak{g})\ar@{->}[rr]^{\mu^\#} &&\cO(T^*\mathfrak{g})  &} & $\!\!\!\!\!\!\!\xRightarrow[reduction]{Hamiltonian}$ &
\xymatrix@=12pt{&&\\ & e\HH e \ar@{~>}[dl] \ar@{~>}[dd] & \\
                     e\H_ce \ar@{~>}[dd] &\\
& \mathfrak{RS}_N \ar@{~>}[dl]&\\\
                      \mathfrak{CM}_N && }
\end{tabular}
\end{equation*}
\caption{The elements of classical, multiplicative, quantum, and quantum multiplicative Hamiltonian reduction, and their degenerations.}\label{prism-diag}
\end{figure}

Such quantum Harish-Chandra isomorphisms have been constructed for special parameters, first in \cite{VV}, when $q$ is a root of unity, via quantum differential operators on $GL_N\times \mathbb{P}^{N-1}$, and subsequently in \cite{J}, in the formal neighborhood of $q=1$, as a special case of a quantum multiplicative quiver variety.

Each of these approaches rely on being ``close to" the classical setting:  in \cite{VV} Azumaya algebras are employed to reduce to a geometric problem on the spectrum of the center, while \cite{J} relies upon formal deformation arguments.  Neither method can be applied to arbitrary -- or even generic -- values of the parameter $q\in\mathbb{C}$.

\subsection{Main result}
This paper is devoted to constructing a $q$-deformed Harish-Chandra isomorphism ``far from" the classical setting -- when $q$ is not a root of unity -- but only in the case $N=2$.  Our main result is Theorem \ref{main-result}, in which we construct an explicit isomorphism of algebras:
$$HC_{q,t}: \cD_q(GL_2) \underset{\mathcal{I}_t}{\dS} U_q\mathfrak{gl}_2 \xrightarrow{\cong} e\HH(GL_2) e,$$
and which holds for any $q\in\mathbb{C}^\times$ not a root of unity, and for all $t\neq\pm i$.

Our construction of $HC_{q,t}$ consists of giving explicit presentations and PBW bases of both algebras (see Theorems \ref{thm-sDAHApresentation} and \ref{HamiltonPresentation}), and directly defining and computing $HC_{q,t}$ against these PBW bases.  To this end, we exploit certain natural $\ZZ^2$-gradings on each algebra; while the graded pieces are infinite-dimensional, we express each algebra as an Ore localization of a certain auxiliary algebra with finite-dimensional graded pieces, and show that $HC_{q,t}$ is a graded isomorphism there, compatibly with the localization.  

\begin{remark} In light of \cite{VV}, \cite{J}, and the present paper, it is natural to conjecture the existence of a $q$-deformed Harish-Chandra isomorphism in complete generality.  In a forthcoming paper \cite{JW}, certain explicit formulas are given for elements $c_1,\ldots, c_N$ which $q$-deform the coefficients of the characteristic polynomial, thus generalizing $\tr_q$ and $\det_q$ of Lemma \ref{trdetarecenter}; these give natural generators for subalgebra of invariants in $\cO_q(G)$. The formulas in Theorem \ref{main-result} suggest that such a $q$-deformed Harish-Chandra homomorphism might be defined simply by sending the collection of generating invariants, $c_i$, in each $\cO_q(G)$-subalgebra of $\cD_q(G)$, to the elementary $q$-symmetric polynomials $P_i$ and $Q_i$, respectively.  

We note, however, that none of three approaches from \cite{VV}, \cite{J}, and the present paper suggest how to prove this gives a well-defined homomorphism, let alone that it is an isomorphism.  We expect the elementary methods in this paper could be extended to $N=3,4,\ldots$ but are unlikely to lead to proof for arbitrary $N$, owing to the increasing complexity in presentation of the spherical DAHA in high rank.
\end{remark}

\begin{remark}[Roots of unity]
When the quantum parameter $q$ is a root of unity, there is an interesting phenomenon, related to the quantum Frobenius Hamiltonian reduction of \cite{G}:  the algebra $\cD_q(G)$ develops a large center, and many more vectors become invariant for the quantum group action.  Meanwhile, the functor of taking invariants is no longer exact, so the simplification of Lemma \ref{ReduceToInvariants} cannot be made.  It is interesting to note that, nevertheless, the isomorphism $HC_{q,t}$ is perfectly well-defined when $q$ is a root of unity, only it identifies the spherical DAHA at a root of unity with an abstract algebra which cannot in turn be identified with the quantum Hamiltonian reduction.  This suggests that, when $q$ is a root of unity, the notion of Hamiltonian reduction employed here should be replaced by one involving divided powers quantum groups, so that the isomorphism $HC_{q,t}$ recovers its natural interpretation.
\end{remark}

\subsection{Relation to factorization homology} 
Let us briefly explain the connection of the present work to \cite{BJ, BZBJ1} and to \cite{BZBJ2}, where Theorem \ref{main-result} has already been announced.  Since this relation is only motivational in the present paper, we will be informal, refering to \cite{BZBJ1},\cite{BZBJ2} for complete details.  Let us fix the braided tensor category $\cA=\operatorname{Rep}_q G$ of integrable  modules for the quantum group, its factorization homology defines an invariant of surfaces valued in ``pointed" (a.k.a. $E_0$-) categories, i.e. categories endowed with a distinguished object:
\begin{align*}\textrm{Surfaces} &\longrightarrow \textrm{Pointed categories},\\
S\quad&\mapsto \quad \left(\int_S\cA,\,\, \cO_S \right).\end{align*}
The algebra $\cO_q(G)$ arises naturally in this context:  its representation theory, the category $\cO_q(G)$-mod$_\cA$ of $\cO_q(G)$-modules in $\cA$, computes the factorization homology $\int_{Ann}\cA$, of $\cA$ on the annulus, and we have $\cO_{Ann} \cong \cO_q(G)$ itself.

Given a $U_q\mathfrak{g}$-equivariant two-sided ideal of $\cO_q(G)$, such as $\mathcal{I}_t$, the category $\cM$ of modules for the algebra $\cO_q(G)/\mathcal{I}_t$ therefore defines a ``braided $\cA$-module category".  Braided $\cA$-module categories serve as the point defects in the associated topological field theory: the factorization homology of the pair $(U_q\mathfrak{g}, \mathcal{I}_t)$ defines an invariant of surfaces with marked points, again valued in pointed categories:
\begin{align*}
\textrm{Marked surfaces} &\longrightarrow\quad \textrm{Pointed categories},\\
(S,x)\,\, &\mapsto\,\, \left(\int_{(S,x)} \!\!\!\!\!\!(\cA,\cM),\,\, \cO_{(S,x)} \right).
\end{align*}

A theorem in \cite{BZBJ2} identifies the endomorphism algebra of the distinguished object for a torus $T^2$, marked at a single point with the braided module category determined by $\mathcal{I}_t$, with the quantum Hamiltonian reduction algebra $\cD_q(G)\dS_{\!\!\mathcal{I}_t}U_q\mathfrak{gl}_2$.

Hence the results of this paper identify this endomorphism algebra, in the case $G=GL_2$, with the spherical double affine Hecke algebra $e\HH e$.  This points to a new construction of the representation theory of the DAHA in topological terms. The factorization homology studied in \cite{BZBJ1},\cite{BZBJ2} is expected to extend to 3-dimensional TFT: its values on knot complements therefore give rise, functorially, to representations of $e\HH e$.

Hence Theorem \ref{main-result} should provide a construction via factorization homology of several expected topological invariants valued in the representation theory of the DAHA, such as the quantum A- \cite{GS, FGL} and DAHA-Jones \cite{C3} polynomials associated to knots in $S^3$, and the DAHA modules of \cite{GORS,ORS} associated to plane singularities.  Our results can also be compared to \cite{BS}, who study appearances of the spherical DAHA as modified skein algebras: in particular, at $t=1$, both constructions recover \cite{FG}, and both constructions are aimed at restoring the lost parameter $t$.  We refer to the reader to the papers \cite{BZBJ1,BZBJ2} for a more detailed discussion.

\subsection{Outline}
An outline of the paper is as follows. In Section \ref{review-DAHA}, we recall the definition of the double affine Hecke algebra $\HH$ of $GL_2$ and its spherical subalgebra $e\HH e$, and we give an explicit presentation and a PBW basis for the latter (Theorem \ref{thm-sDAHApresentation}). In Section \ref{review-cDq} we recall the definition of $\cD_q(GL_2)$ and its Hamiltonian reduction $\cD_q(GL_2) \underset{\mathcal{I}_t}{\dS} U_q\mathfrak{g}$, and we give an explicit presentation and a PBW basis for the latter (Theorem \ref{HamiltonPresentation}). Section \ref{mainstatement} contains the main result, Theorem \ref{main-result}, which constructs an isomorphism between the spherical DAHA and the Hamiltonian reduction of $\cD_q(GL_2)$.  This result follows directly from the presentations of these two algebras from Theorems \ref{thm-sDAHApresentation} and \ref{HamiltonPresentation}, which constitute the main technical contribution of the paper. The proofs of Theorems \ref{thm-sDAHApresentation} and \ref{HamiltonPresentation} are delayed until Section \ref{sec-DAHAPResentation}, and Section \ref{PBW-DqG}, respectively.

\subsection{Acknowledgments} The authors would like to thank Pavel Etingof for several helpful conversations over the years.  The first author was supported by the EPSRC grant EP/K025384/1. The research of the second author is supported by ERC Starting Grant 637618.




\section{Double affine Hecke algebra $\mathbb{H}_{t,q}$ of $GL_2$ and its spherical subalgebra}\label{review-DAHA}

\subsection{Parameters}
Let us fix as a base ring $R_{t,q}=\mathbb{C}[t^{\pm1},q^{\pm1}][(t^2+1)^{-1}]$, 
the commutative ring of rational functions in variables $t$ and $q$ whose denominators are products of powers of $t,q$ and $t^2+1$.

For any numbers $q',t' \in \mathbb{C}^\times$ with $(t')^2+1\ne 0$, the quotient of $R_{t,q}$ by the corresponding maximal ideal $\mathfrak{m}=\langle(q-q'),(t-t')\rangle$ gives the evaluation map $R_{t,q}\twoheadrightarrow R_{t,q}/\mathfrak{m}\cong \mathbb{C}$.  For any algebra $\mathbf{A}$ over $R_{t,q}$, the evaluation map gives its specialization, an algebra $\mathbf{A}/\mathfrak{m}\mathbf{A}$ over over $\mathbb{C}$.

For the rest of the paper we will work with algebras depending on parameters $q$ and $t$; we will treat the corresponding algebras over $R_{t,q}$ and their specializations simultaneously. Most arguments and results apply in both settings; we will point out the distinction only when necessary.  In particular, the condition ``$q$ is not a non-trivial root of unity" means either that we work over $R_{q,t}$ (so the condition is vacuously true), or over $\CC$, but with the stated restriction.

\subsection{The double affine Hecke algebra $\mathbb{H}_{t,q}$}

\begin{definition}\label{defDAHA}
The double affine Hecke algebra (DAHA) of type $GL_2$ is the associative algebra  $\mathbb{H}_{t,q}$ 
with generators 
$$T^{\pm 1}, X_{1}^{\pm 1}, X_{2}^{\pm 1}, Y_{1}^{\pm 1}, Y_{2}^{\pm 1}$$
and relations
\begin{align}
\begin{aligned}
TX_1T=X_2, \qquad T^{-1}Y_1T^{-1}=Y_2, \qquad X_1X_2 = X_2X_1,\qquad Y_1Y_2=Y_2Y_1,\\
Y_1X_1X_2=q^2X_1X_2Y_1, \qquad X_1^{-1}Y_2=Y_2X_1^{-1}T^{-2}, \qquad (T+t^{-1})(T-t)=0.
\end{aligned}\label{defDAHA-relations1}
\end{align}
\end{definition}

\begin{remark}
This definition first appeared in \cite{C1}. There are several different conventions in the literature for the presentation of DAHA and the choice of parameters. Definition \ref{defDAHA} follows the conventions of \cite{GN} (see also \cite{C1}, \cite{C2}, and \cite{O}) up to replacing the parameters $t$ by $t^2$ and $q$ by $q^2$. This rescaling is not significant, and it is dictated by the need to align the parameters for DAHA with the parameters for $\cD_q(GL_2)$ in the next section. 
\end{remark}

\begin{remark}Let $S_2$ be the symmetric group  of order $2$. The algebra $\mathbb{H}_{1,1}$ is isomorphic to 
the semidirect product
$\CC[S_2]\ltimes \mathbb{C}[X_1^{\pm1},X_2^{\pm1},Y_1^{\pm1},Y_2^{\pm1}]$, and the algebra $\mathbb{H}_{t,q}$ is a flat deformation of $\mathbb{H}_{1,1}$.
\end{remark}

The following proposition gives the reordering relations for any two generators $X_i,Y_j$ and the PBW basis. Similar relations can be written for any two generators $X_i^{\pm1},Y_j^{\pm1}$.

\begin{proposition}\label{prop-AllRelnsDAHA}
\begin{enumerate}
\item The following relations are also satisfied in $\mathbb{H}_{t,q}$:
\begin{align}
\begin{aligned}
T^2=(t-t^{-1})T+1, \quad &  \quad T^{-1}=T- (t-t^{-1}),
\end{aligned}\label{defDAHA-relations3}
\end{align}
\begin{align}
\begin{aligned}
X_1Y_1=q^{-2}T^{-2}Y_1X_1,\quad&\quad
X_1Y_2=Y_2X_1 +(t-t^{-1}) T^{-1}Y_1X_1,\\
X_2Y_1=Y_1X_2+(t-t^{-1})q^{-2}T^{-1}Y_1X_1  \quad&\quad 
X_2Y_2=q^{-2}Y_2X_2T^{-2}.
\end{aligned}\label{defDAHA-relations2}
\end{align}
\item The set
 $$\{T^\epsilon Y_1^{a_1} Y_2^{a_2}X_1^{b_1}X_2^{b_2} \ | \  \epsilon \in \{0,1\}, a_i,b_i \in \mathbb{Z}\}$$ forms a basis of  $\mathbb{H}_{t,q}$.
\end{enumerate}
\end{proposition}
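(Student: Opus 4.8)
The plan is to prove part (1) by direct manipulation of the defining relations \eqref{defDAHA-relations1}, and part (2) by a standard diamond-lemma / PBW-style argument using the reordering relations from part (1).

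For part (1), the relations \eqref{defDAHA-relations3} are immediate: expanding the quadratic Hecke relation $(T+t^{-1})(T-t)=0$ gives $T^2 = (t-t^{-1})T + 1$, and multiplying by $T^{-1}$ (which exists by hypothesis) gives $T^{-1} = T - (t-t^{-1})$. For the four relations in \eqref{defDAHA-relations2}, I would start from the generating relations $TX_1T = X_2$, $T^{-1}Y_1T^{-1} = Y_2$, $X_1^{-1}Y_2 = Y_2 X_1^{-1} T^{-2}$, and $Y_1 X_1 X_2 = q^2 X_1 X_2 Y_1$. First rewrite $X_1^{-1}Y_2 = Y_2 X_1^{-1}T^{-2}$ as $Y_2 X_1 = X_1 T^2 Y_2$ (conjugating/multiplying appropriately), or equivalently derive $X_1 Y_2 = \ldots$ by using $Y_2 = T^{-1}Y_1 T^{-1}$ and $X_2 = T X_1 T$ to reduce everything to relations among $X_1, Y_1, T$. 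The single "hard" commutation relation is between $X_1$ and $Y_1$: combining $Y_1 X_1 X_2 = q^2 X_1 X_2 Y_1$ with $X_2 = TX_1T$, and with the braid-type relation, should yield $X_1 Y_1 = q^{-2} T^{-2} Y_1 X_1$. Once this is in hand, the other three follow by conjugating by $T$ or $T^{-1}$ and using $X_2 = TX_1T$, $Y_2 = T^{-1}Y_1T^{-1}$, together with $T^{-1} = T - (t-t^{-1})$ to produce the $(t-t^{-1})T^{-1}$ correction terms. This is routine but requires care with the order of operations and with the inverses; I expect this bookkeeping to be the most error-prone part of part (1), though not conceptually deep.

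For part (2), the strategy is the usual one: the reordering relations in \eqref{defDAHA-relations3} and \eqref{defDAHA-relations2} (together with their $\pm 1$ analogues, which follow from the same relations after inverting) provide a rewriting system that moves any monomial in the generators into the normal form $T^\epsilon Y_1^{a_1}Y_2^{a_2}X_1^{b_1}X_2^{b_2}$: the relation $T^2 = (t-t^{-1})T + 1$ reduces powers of $T$ to $\epsilon \in \{0,1\}$; the relations \eqref{defDAHA-relations2} move all $Y$'s to the left of all $X$'s (at the cost of lower-order terms with an extra $T$, which then gets reduced again); the commutativity $X_1X_2 = X_2X_1$ and $Y_1Y_2 = Y_2Y_1$ sort the $X$'s and $Y$'s among themselves. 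So the spanning statement is straightforward. For linear independence, I would invoke the Bergman diamond lemma: one checks that all overlap ambiguities among these rewriting rules are resolvable (confluent). Alternatively — and more cheaply — one can exhibit a faithful module: $\mathbb{H}_{t,q}$ acts on the vector space with basis $\{Y_1^{a_1}Y_2^{a_2}X_1^{b_1}X_2^{b_2}\} \oplus \{T\cdot(\text{same})\}$ by left multiplication defined via the normal-form rules, check that the defining relations \eqref{defDAHA-relations1} are satisfied by these operators, and conclude that the normal-form monomials are linearly independent (since they act as linearly independent operators on the cyclic vector $1$). The main obstacle here is the verification of confluence (or, in the module approach, checking that the operators satisfy all of \eqref{defDAHA-relations1}): there are several overlapping triples — e.g. the overlap $T \cdot T \cdot X_1$, or $X_2 \cdot Y_1 \cdot$ versus rewriting $X_2$ first, or $Y_1 \cdot X_1 \cdot X_2$ which must agree with the explicit $q^2$-relation — and each must be confirmed to reduce to the same normal form. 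Given the small rank ($GL_2$, so only five generators and a handful of relations), this is a finite and manageable check, and I expect it to go through without surprises; the flatness remark ($\mathbb{H}_{t,q}$ deforming $\mathbb{H}_{1,1}$) can be used as a sanity check, since at $t=q=1$ the algebra is visibly $\CC[S_2] \ltimes \CC[X_1^{\pm1},X_2^{\pm1},Y_1^{\pm1},Y_2^{\pm1}]$ with exactly this basis.
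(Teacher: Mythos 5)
Your treatment of part (1) is essentially the paper's: expand the Hecke relation to get \eqref{defDAHA-relations3}, rewrite $X_1^{-1}Y_2=Y_2X_1^{-1}T^{-2}$, extract the commutation of $X_1$ with $Y_1$ by combining it with $Y_1X_1X_2=q^2X_1X_2Y_1$ and $X_2=TX_1T$, then conjugate by $T^{\pm1}$ and use $T^{-1}=T-(t-t^{-1})$ to produce the remaining three relations with their $(t-t^{-1})$ correction terms. The paper's order of derivation is slightly different (it establishes $X_1Y_2$ and $X_2^{-1}Y_1$ before $X_1Y_1$), but the computation is the same in spirit.

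For part (2), however, the paper does not prove the PBW basis at all; it simply cites Cherednik's \cite{C2}. Your proposal to prove it directly via Bergman's diamond lemma (or, equivalently, a faithful module on the span of normal-form monomials) is a genuinely different — and more self-contained — route. It is certainly correct in outline, and in fact it is stylistically consistent with what the paper actually does elsewhere: Lemma \ref{PBW-A} and Lemma \ref{PBWauxB} establish PBW bases for $\mathcal{A}^+$ and $\mathcal{B}^+$ by exactly this diamond-lemma technique, so the authors clearly trust the method, they just did not want to redo it for $\mathbb{H}_{t,q}$ when a citation was available. What your approach buys is self-containment; what it costs is the confluence check, which is larger than you might expect, since the rewriting system must also include rules for commuting $T^{\pm1}$ past $X_i^{\pm1}$ and $Y_i^{\pm1}$ (coming from $TX_1=X_2T^{-1}$, $TY_2=Y_1T^{-1}$, etc.) and rules for the inverse generators. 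You correctly flag this as the main bookkeeping burden; it is finite and manageable, and your $t=q=1$ flatness sanity check is a good safeguard.
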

\begin{proof}
Part (1) is proved by direct computations. Part (2) was proved in \cite{C2}. For illustration we prove the relations \eqref{defDAHA-relations2}. 

First, rewriting the relation $X_1^{-1}Y_2=Y_2X_1^{-1}T^{-2}$ from \eqref{defDAHA-relations1} as $Y_2T^2X_1=X_1Y_2$, we have
\begin{align}
X_1Y_2&\stackrel{\phantom{\eqref{defDAHA-relations3}}}{=}Y_2T^2X_1 \notag\\
&\stackrel{\eqref{defDAHA-relations3}}{=}Y_2X_1+(t-t^{-1})Y_2TX_1 \notag\\
&\stackrel{\eqref{defDAHA-relations1}}{=}Y_2X_1+(t-t^{-1})T^{-1}Y_1X_1. \label{defDAHA-relations2.1}
\end{align}
Conjugating the relation $X_1^{-1}Y_2=Y_2X_1^{-1}T^{-2}$  by $T^{-1}$ and rewriting, we have
\begin{align}
T^{-1}X_1^{-1}Y_2T&=T^{-1}Y_2X_1^{-1}T^{-1} \notag\\
X_2^{-1}Y_1&=T^{-2}Y_1 X_2^{-1}. \label{defDAHA-relations2.2}
\end{align}
Next, we have
\begin{align}
X_1Y_1&\stackrel{\phantom{\eqref{defDAHA-relations3}}}{=}X_2^{-1}X_1X_2Y_1 \notag\\
&\stackrel{\eqref{defDAHA-relations1}}{=}q^{-2}X_2^{-1}Y_1X_1X_2 \notag\\
&\stackrel{\eqref{defDAHA-relations2.2}}{=}q^{-2}T^{-2}Y_1 X_1\label{defDAHA-relations2.3}
\end{align}
Conjugating \eqref{defDAHA-relations2.3} by $T$, we have
\begin{align}
X_2Y_2&\stackrel{\eqref{defDAHA-relations1}}{=} TX_1Y_1T^{-1}\notag\\
&\stackrel{\eqref{defDAHA-relations2.3}}{=} q^{-2}T^{-1}Y_1 X_1 T^{-1}\notag\\
&\stackrel{\eqref{defDAHA-relations1}}{=} q^{-2}Y_2 X_2 T^{-2}.\label{defDAHA-relations2.4}
\end{align}
Finally, combining \eqref{defDAHA-relations2.1} and \eqref{defDAHA-relations2.3}, we get
\begin{align}
X_2Y_1&\stackrel{\eqref{defDAHA-relations1}}{=}TX_1T^2Y_2T \notag\\
&\stackrel{\eqref{defDAHA-relations3}}{=}TX_1Y_2T+(t-t^{-1})TX_1TY_2T \notag\\
&\stackrel{\eqref{defDAHA-relations2.1}}{=}TY_2X_1T+(t-t^{-1})Y_1X_1T+(t-t^{-1})TX_1TY_2T \notag\\
&\stackrel{\eqref{defDAHA-relations1}}{=}Y_1T^{-1}X_1T+(t-t^{-1})Y_1X_1T+(t-t^{-1})TX_1Y_1 \notag\\
&\stackrel{\eqref{defDAHA-relations3}}{=}Y_1TX_1T+(t-t^{-1})TX_1Y_1 \notag\\
&\stackrel{\eqref{defDAHA-relations2.3}}{=}Y_1X_2+(t-t^{-1})q^{-2}T^{-1}Y_1 X_1.
\label{defDAHA-relations2.5}
\end{align}

\end{proof}

The algebra $\mathbb{H}_{t,q}$ is $\mathbb{Z}^2$-graded by
$$\deg X_1= \deg X_2=(0,1),\quad \deg Y_1 = \deg Y_2 =(1,0),\quad \deg T=(0,0).$$ This grading is inner: it follows from \eqref{defDAHA-relations2} that any $h\in \mathbb{H}_{t,q}$ with $\deg{h}=(M,N)$ satisfies
\begin{align}
Y_1Y_2 \, h = q^{2N} h Y_1Y_2 \quad & \quad X_1X_2 \, h = q^{-2M} h X_1X_2. \label{X1X2-is-q-central}
\end{align}

\subsection{The spherical double affine Hecke algebra $e\mathbb{H}_{t,q}e$}

The subalgebra of the double affine Hecke algebra generated by $T$ and $T^{-1}$ is the (finite) Hecke algebra of the symmetric group $S_2$. For $t^2+1 \ne 0$, this algebra is isomorphic to the group algebra of $S_2$. Denoting the only nontrivial element of the symmetric group by $s$, the identification is given by 
$$T=\frac{t+t^{-1}}{2}s+\frac{t-t^{-1}}{2}.$$
The trivial idempotent of $S_2$ is
$$e=\frac{1+s}{2}=\frac{1+tT}{1+t^2} \in \mathbb{H}_{t,q}.$$
It satisfies 
\begin{align}
eT=Te=te, \qquad eT^{-1}=T^{-1}e=t^{-1}e. \label{eT=Te=te}  
\end{align}
This action of this idempotent on any representation is the projection to the trivial representation of $S_2$, or equivalently to the representation of the Hecke algebra in which $T$ acts by the eigenvalue $t$.

\begin{definition}
The spherical double affine Hecke algebra is the subalgebra $e\mathbb{H}_{t,q}e$ of $\mathbb{H}_{t,q}$.
\end{definition}

We will give an explicit presentation of the spherical DAHA by generators and relations, and the resulting PBW type basis for it. 

\begin{theorem}\label{thm-sDAHApresentation}
\begin{enumerate}
\item The spherical double affine Hecke algebra $e\mathbb{H}_{t,q}e$ is isomorphic to the algebra with generators $P_1,P_2^{\pm1},Q_1,Q_2^{\pm1},R$ and relations:

\vspace{-0.5cm}
\hspace{-1cm}
\begin{minipage}[t]{0.41\textwidth}
\begin{align}
P_2P_1&=P_1P_2 \label{P1P2} \\
Q_2Q_1&=Q_1Q_2  \label{Q1Q2} \\
P_2Q_2&=q^{-4}Q_2P_2 \label{P2Q2} \\
P_2Q_1&=q^{-2}Q_1P_2\label{P2Q1} \\
P_1Q_2&=q^{-2}Q_2P_1 \label{P1Q2} \\
P_1Q_1&= Q_1P_1+(q^{-2}-1)R \label{P1Q1} 
\end{align}
\end{minipage}
\begin{minipage}[t]{0.565\textwidth}
 \begin{align}
P_2R&= q^{-2}RP_2 \label{P2R}\\
RQ_2&= q^{-2}Q_2R \label{RQ2} \\
P_1R&=q^{-2}RP_1+(1-q^{-2})Q_1P_2  \label{P1R} \\
RQ_1&= q^{-2}Q_1R+(1-q^{-2})Q_2P_1 \label{RQ1} \\
R^2&= (1+t^2)(q^{-2}+t^{-2})Q_2P_2- \label{R2reln}  \\
& -q^{-2}Q_2P_1^2-q^{-2}Q_1^{2}P_2+q^{-2}Q_1RP_1. \notag
\end{align}
\end{minipage}

\vspace{0.6cm}
The isomorphism is given by 
\begin{align}
\begin{aligned}
\Phi(P_1)= e(X_1+X_2)e, \quad &  \Phi(P_2)=  e(X_1X_2)e, \quad &  \Phi(R)  = e(t^{-2}Y_1X_1+Y_2X_2)e,\\
\Phi(Q_1)=  e(Y_1+Y_2)e,\quad &   \Phi(Q_2)=  e(Y_1Y_2)e.&\\
\end{aligned}
\label{sDAHA-pres-iso}
\end{align}
\item After the identification given by the isomorphism $\Phi$, the set  $$\{Q_1^{a_1}Q_2^{a_2}R^\epsilon P_1^{b_1}P_2^{b_2} \ | \  \epsilon \in \{0,1\}, a_1,b_1 \in \mathbb{N}_0, a_2,b_2 \in \mathbb{Z} \}$$
forms a basis of the spherical double affine Hecke algebra $e\mathbb{H}_{t,q}e$. 
\end{enumerate}
\end{theorem}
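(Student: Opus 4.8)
The plan is to prove both statements at once by exhibiting the map $\Phi$ on generators, checking it is well-defined, and then using the PBW basis of $\mathbb H_{t,q}$ from Proposition \ref{prop-AllRelnsDAHA} to show $\Phi$ is bijective onto $e\mathbb H_{t,q}e$. First I would verify that the claimed images $\Phi(P_1),\dots,\Phi(R)$ actually lie in $e\mathbb H_{t,q}e$: each is of the form $e(\cdots)e$ by construction, so this is immediate, but one should note that $\Phi(P_1)=e(X_1+X_2)e$ and $\Phi(Q_1)=e(Y_1+Y_2)e$ are the images under $e(-)e$ of the $S_2$-symmetrizations of $X_1,Y_1$, and that $eX_2e = eX_1e$ fails — rather $e(X_1+X_2)e$ is the natural symmetric object. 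Then the bulk of the well-definedness check is to verify relations \eqref{P1P2}--\eqref{R2reln} hold for the images; this is a direct computation using the reordering relations \eqref{defDAHA-relations2}, the quadratic relation \eqref{defDAHA-relations3}, and the identities $eT=Te=te$, $eT^{-1}=T^{-1}e=t^{-1}e$ from \eqref{eT=Te=te}, together with $e^2=e$. For instance \eqref{P1Q1} unwinds to computing $e(X_1+X_2)e(Y_1+Y_2)e$ versus $e(Y_1+Y_2)e(X_1+X_2)e$ by moving $e$'s past $T$'s and applying the four relations in \eqref{defDAHA-relations2}; the extra term $(q^{-2}-1)R$ should emerge from the $T^{-1}$-twisted cross terms. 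The relation \eqref{R2reln} is the one genuinely messy computation: expand $e(t^{-2}Y_1X_1+Y_2X_2)e \cdot e(t^{-2}Y_1X_1+Y_2X_2)e$, reorder everything into the PBW form $T^\epsilon Y^{\mathbf a}X^{\mathbf b}$, apply $e(-)e$, and match against the right-hand side.

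Next, for surjectivity and the PBW claim in part (2), I would argue as follows. The algebra $\mathbb H_{t,q}$ is $\ZZ^2$-graded with $\deg X_i=(0,1)$, $\deg Y_i=(1,0)$, $\deg T=(0,0)$, and $e$ has degree $(0,0)$, so $e\mathbb H_{t,q}e$ inherits the grading; by the PBW basis of $\mathbb H_{t,q}$, the graded piece $e\mathbb H_{t,q}e$ in degree $(M,N)$ is spanned by $eT^\epsilon Y_1^{a_1}Y_2^{a_2}X_1^{b_1}X_2^{b_2}e$ with $a_1+a_2=M$, $b_1+b_2=N$. Using $eT=te$ we may take $\epsilon=0$, and using $S_2$-invariance of $e(-)e$ (conjugation by $T$ fixes $e(-)e$ up to the relations) one reduces to monomials with $a_1\le a_2$, $b_1\le b_2$, say, or more precisely to the span of symmetrized monomials $e\,\mathrm{Sym}(Y_1^{a_1}Y_2^{a_2}X_1^{b_1}X_2^{b_2})\,e$. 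I then need to show every such symmetrized monomial lies in the image of $\Phi$, i.e. in the subalgebra generated by $P_1,P_2^{\pm1},Q_1,Q_2^{\pm1},R$. This is a $q$-deformed version of the classical fact that the ring of $S_2$-invariant Laurent polynomials in $X_1,X_2$ is generated by $X_1+X_2$ and $(X_1X_2)^{\pm1}$: one proves by induction on the degree and on $a_2-a_1$ (resp. $b_2-b_1$) that each symmetric monomial is a polynomial in $P_1,P_2^{\pm1}$ (when only $X$'s appear) and that the mixed monomials are reachable using $R$ and relations \eqref{P1R}, \eqref{RQ1}, \eqref{R2reln} to trade an $R$ for the cross-term $Y_2X_1$-type elements.

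Finally, for injectivity and to pin down the PBW basis exactly, I would compare graded dimensions (over $R_{t,q}$, or Hilbert series of the associated graded). From the defining relations \eqref{P1P2}--\eqref{R2reln} one sets up a Bergman diamond lemma argument: order the generators as $Q_1,Q_2,R,P_1,P_2$, check that the rewriting rules coming from \eqref{P1P2}--\eqref{P1Q1}, \eqref{P2R}--\eqref{RQ1}, and the rule $R^2\rightsquigarrow(\cdots)$ from \eqref{R2reln} have no unresolvable overlap ambiguities, and conclude that the abstract algebra $A$ presented by those relations has basis $\{Q_1^{a_1}Q_2^{a_2}R^\epsilon P_1^{b_1}P_2^{b_2}\}$ with $\epsilon\in\{0,1\}$, $a_1,b_1\in\mathbb N_0$, $a_2,b_2\in\ZZ$. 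Then $\Phi$ factors as $A\twoheadrightarrow e\mathbb H_{t,q}e$ (well-defined by step one, surjective by step two), and it remains to check $\Phi$ sends the spanning set of $A$ to a linearly independent set in $e\mathbb H_{t,q}e$ — which follows by expanding each $\Phi(Q_1^{a_1}Q_2^{a_2}R^\epsilon P_1^{b_1}P_2^{b_2})$ in the PBW basis of $\mathbb H_{t,q}$ and identifying a triangular leading term, e.g. $e\,Y_1^{a_1}Y_2^{a_1+a_2}X_1^{b_1}X_2^{b_1+b_2}\,e$ (with a correction when $\epsilon=1$) with invertible coefficient.

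I expect the main obstacle to be the overlap-ambiguity check in the diamond lemma for the presentation of $A$: the relation \eqref{R2reln} expressing $R^2$ is quadratic and cubic in the other generators, so the overlaps $R^2\cdot Q_1$, $R^2\cdot P_1$, $P_1\cdot R^2$, $Q_1\cdot R^2$ (via \eqref{P1R},\eqref{RQ1}) and $R\cdot R^2$ versus $R^2\cdot R$ must all be resolved, and verifying these close up consistently is the delicate computational heart of the argument; this is presumably why the authors defer it to Section \ref{sec-DAHAPResentation}. A cleaner alternative, avoiding the full diamond lemma, is to directly prove the spanning set of part (2) is a basis of $e\mathbb H_{t,q}e$ using only the PBW basis of $\mathbb H_{t,q}$ (via the leading-term triangularity above), and separately check $\Phi$ is injective on that set — then well-definedness of $\Phi$ together with this bijectivity forces the abstract presentation in part (1) to be correct, since any further relation in $A$ would contradict the basis count.
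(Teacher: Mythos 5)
Your proposal reproduces three of the four main components of the paper's argument: the Diamond Lemma proof that the abstract algebra $\mathcal A$ has the stated PBW basis (Lemma~\ref{PBW-A}), the direct verification that $\Phi$ respects the relations (Lemma~\ref{Phi-is-hom}), and injectivity of $\Phi$ via the PBW theorem for $\mathbb H_{t,q}$ (Lemma~\ref{Phi-inj}). The genuine divergence is in the surjectivity step, and that is worth discussing.

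You propose a constructive surjectivity argument: induct on degree and express each $e\,Y_1^{a_1}Y_2^{a_2}X_1^{b_1}X_2^{b_2}\,e$ in the subalgebra generated by $\Phi(P_1),\dots,\Phi(R)$, imitating the classical fact that $S_2$-invariant Laurent polynomials in two variables are generated by the elementary symmetric ones. This is plausible but delicate in the noncommutative setting: the symmetrization $e(-)e$ is $T$-twisted (for example $eX_2e = t^2\,eX_1e$, since $eX_2e = e(TX_1T)e = (eT)X_1(Te) = t^2 eX_1e$), so the ``symmetric monomials'' do not behave like their classical counterparts, and the cross terms $eY_iX_je$ interact through the reordering relations in a way that makes the induction essentially as involved as the Diamond Lemma itself. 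The paper sidesteps this entirely: it passes to the positive subalgebras $\mathcal A^+$ and $e\mathbb H_{t,q}^+e$ (where the $\mathbb Z^2$-graded pieces are finite-dimensional; cf.\ Definition~\ref{A+defn} and Lemma~\ref{localiczationDAHA}), computes the Hilbert series of $\mathcal A^+$ from the Diamond-Lemma basis (Lemma~\ref{Hilb-A}), computes the Hilbert series of $e\mathbb H_{t,q}^+e$ by observing that the graded dimensions are a $\mathbb Z$-valued polynomial function of $t$ and hence constant, equal to the classical $S_2$-invariant count at $t=q=1$ (Lemma~\ref{Hilb-sDAHA}), and concludes surjectivity from injectivity plus equality of finite graded dimensions, then localizes back. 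This trades a hands-on generating argument for a dimension count and is both cleaner and more robust; the price is the (slightly subtle) $t,q$-independence observation.

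A smaller point: your injectivity sketch via ``triangular leading term with invertible coefficient'' is the right instinct but not quite the paper's argument, and stating a single leading monomial such as $e\,Y_1^{a_1}Y_2^{a_1+a_2}X_1^{b_1}X_2^{b_1+b_2}\,e$ begs the question of why such putative leading terms are themselves independent under $e(-)e$. Lemma~\ref{Phi-inj} instead first strips off the idempotent, using $e=(1+tT)/(1+t^2)$ and the observation that multiplying a $Y,X$-polynomial by $T$ on the left is already in PBW form (so $eh=0$ forces $h=0$); then it extracts the coefficient of the highest power of $Y_1$ and analyzes the symmetric and $(X_1-X_2)$-multiple parts of the resulting polynomial in $X_1,X_2$ to force all coefficients to vanish. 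Your ``cleaner alternative'' at the end --- prove the image set is directly a basis and read off the presentation from the bijection --- is logically sound but still requires both linear independence and spanning, so it does not in fact avoid the surjectivity work; it merely reorganizes the bookkeeping.
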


The presentation of the spherical DAHA by generators and relations given in Theorem \ref{thm-sDAHApresentation} is one of the main technical steps of the paper and is used in the proof of the main result, Theorem \ref{main-result}. Section \ref{sec-DAHAPResentation} is dedicated to the proof of its proof.

\section{The algebra $\cD_q(GL_2)$ of quantum differential operators and its Hamiltonian reduction}\label{review-cDq}

In this section, we recall the definitions quantum enveloping algebra $U_q\mathfrak{gl}_2$, the quantum coordinate algebra $\cO_q(GL_2)$, the algebra $\cD_q(GL_2)$ of polynomial quantum differential operators, and the Hamiltonian reduction $\cD_q(GL_2)\dS_{\!\!\! \mathcal{I}_t} U$ of $\cD_q(GL_2)$.  We will follow the conventions of \cite{KS} and \cite{J}. 

\subsection{The quantum enveloping algebra $U=U_q\mathfrak{gl}_2$}

\begin{definition} The quantum enveloping algebra $U=U_q\mathfrak{gl}_2$ is the Hopf algebra with
\begin{itemize}
\item generators $$E,F,K_1^{\pm 1}, K_2^{\pm 1};$$
\item relations
\begin{align*}
K_1EK_1^{-1}=qE, \quad & \quad K_2EK_2^{-1}=q^{-1}E,\\ 
K_1FK_1^{-1}=q^{-1}F, \quad & \quad K_2FK_2^{-1}=qF,\\
K_1K_2=K_2K_1 , \quad & \quad EF-FE=\frac{K_1K_2^{-1}-K_1^{-1}K_2}{q-q^{-1}};
\end{align*}
\item the Hopf structure:
\begin{align*}
\Delta(E)&=E\otimes K_1K_2^{-1} + 1\otimes E,  & \quad S(E)&=-EK_1^{-1}K_2 ,  & \quad  \varepsilon(E)=0,\\
\Delta(F)&= F\otimes 1+K_1^{-1}K_2\otimes F,  & \quad S(F)&=-K_1K_2^{-1}F ,  & \quad \varepsilon(F)=0,\\
\Delta(K_i)&= K_i\otimes K_i,  & \quad S(K_i)&=K_i^{-1} ,  & \quad \varepsilon(K_i)=1.\\
\end{align*}
\end{itemize}
\end{definition}

The vector representation $V$ of $U$ has ordered basis $\{e_{-1}, e_{1}\}$, with action: 
\begin{align*}
\rho_V(K_1)=\left[\begin{array}{cc}q^{-1} & 0 \\ 0 & 1 \end{array}\right], \  \rho_V(K_2)=\left[\begin{array}{cc}1 & 0 \\ 0 & q^{-1} \end{array}\right], \ \rho_V(E)=\left[\begin{array}{cc}0 & 0 \\ 1 & 0 \end{array}\right], \ \rho_V(F)=\left[\begin{array}{cc}0 & 1 \\ 0 & 0 \end{array}\right].
\end{align*}

The Hopf algebra $U$ is quasi-triangular, with the R-matrix
\begin{align*}
\mathcal{R}= q^{{H\otimes H}} \, \cdot \, \sum_{n\ge 0}q^{\frac{n(n-1)}{2}} (q-q^{-1})^n \frac{1}{[n]_q!} E^n\otimes F^n. 
\end{align*}
Here $q^{{H\otimes H}}$ is the operator which acts by a scalar on the tensor product of any two weight spaces. More precisely, for $V_1,V_2$ representations of $U$, and $v_1 \in V_1,v_2 \in V_2$ vectors such that $K_i\rhd v_j=q^{\alpha_{ij}}v_j$, the action of $q^{{H\otimes H}}$ on their tensor product is given by $q^{{H\otimes H}}\rhd (v_1\otimes v_2)=q^{\alpha_{11}\alpha_{12}+\alpha_{21}\alpha_{22}}(v_1\otimes v_2)$. 

\subsection{A presentation of $U$ via L-matrices}
We recall an alternative presentation of $U$ 
 which uses the R-matrix $\mathcal{R}$, its inverse $\mathcal{R}^{-1}$ and the vector representation $\rho_V$ (see \cite{KS}). Define $\underline{R}$, $L^+$, $L^-$ by 
\begin{align*}
\underline{R}&=(\rho_V \otimes \rho_V)(\mathcal{R})\in \mathrm{Mat}_{2\times 2} \otimes \mathrm{Mat}_{2\times 2}\\
L^+&=(\mathrm{id}\otimes \rho_V)(\mathcal{R}) \in \mathrm{Mat}_{2\times 2}(U)\\ 
L^-&=( \rho_V \otimes \mathrm{id})(\mathcal{R}^{-1}) \in \mathrm{Mat}_{2\times 2}(U),
\end{align*}
where $\mathrm{Mat}_{2\times 2}$ denotes the set of $2\times 2$ matrices with entries in the ground ring ($R_{t,q}$ or $\mathbb{C}$), and $\mathrm{Mat}_{2\times 2}(U)$ denotes the set of $2\times 2$ matrices with entries in the algebra $U$. Let $L^\pm_1=L^\pm \otimes \mathrm{id}$, $L^\pm_2=\mathrm{id}\otimes L^\pm$.

\begin{proposition}[\cite{KS}, Theorem 8.33]The Hopf algebra $U$ is isomorphic to the Hopf algebra with:
\begin{itemize}
\item generators given by the entires of the matrices
\begin{align*}
L^+=\left[\begin{array}{cc}l^{+1}_1 & l^{+1}_2 \\ 0 & l^{+2}_2 \end{array}\right], \quad  L^-=\left[\begin{array}{cc}l^{-1}_1 &0 \\ l^{-2}_1 & l^{-2}_2 \end{array}\right];
\end{align*}
\item relations given by the entries of the matrix equations
\begin{align*}
L_1^\pm L_2^\pm \underline{R}=\underline{R}  L_2^\pm  L_1^\pm, \quad \quad L_1^- L_2^+ \underline{R}=\underline{R}  L_2^+  L_1^-, \quad \textrm{ and } \quad  & l^{-i}_i=(l^{i}_i)^{-1};
\end{align*}
\item which may be written explicitly as:

\begin{align*}
\begin{aligned}
&l^{+1}_1l^{-1}_1 = l^{+2}_2l^{-2}_2 = 1 & \quad &l^{\pm1}_1 l^{\pm 2}_2 = l^{\pm 2}_2 l^{\pm 1}_1& &\\
&l^{+1}_1l^{+1}_2 = q^{-1}l^{+1}_2l^{+1}_1& \quad & l^{+2}_2l^{+1}_2 = ql^{+1}_2l^{+2}_2& &\\
&l^{+1}_1l^{-2}_1 = ql^{-2}_1l^{+1}_1& \quad & l^{+2}_2l^{-2}_1 = q^{-1}l^{-2}_1l^{+2}_2& &\\
&l^{-2}_1l^{+1}_2-l^{+1}_2l^{-2}_1 = (q-q^{-1})(l^{+2}_2l^{-1}_1-l^{+1}_1l^{-2}_2);& \quad & & &
\end{aligned}
\end{align*}

\item the Hopf structure given by:
\begin{align*}
S(L^\pm)=(L^\pm)^{-1}, \quad  \quad \Delta(l^{\pm i}_j)=\sum_k l^{\pm i}_k\otimes l^{\pm k}_j, \quad \quad \varepsilon(l^{\pm i}_j)=\delta_{ij}.
\end{align*}
\end{itemize}

The isomorphism is given by:
\begin{align*}
l^{\pm i}_i=K_i^{\mp 1}, \quad l^{+1}_2=(q-q^{-1})K_1^{-1}E, \quad l^{-2}_1=-(q-q^{-1})FK_1.
\end{align*}
\end{proposition}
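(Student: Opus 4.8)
The plan is to realize the $L$-matrix presentation as the image of the universal $R$-matrix under partial evaluation in the vector representation, and to exhibit mutually inverse Hopf algebra homomorphisms between $U=U_q\mathfrak{gl}_2$ and the abstract algebra $\tilde U$ defined by the $L$-matrix generators and the RLL relations. First I would compute $\underline{R}$, $L^+$ and $L^-$ explicitly. Since $V$ is two-dimensional, $\rho_V(E)^2=\rho_V(F)^2=0$, so only the $n=0,1$ terms of the series for $\mathcal{R}$ survive; evaluating $q^{H\otimes H}$ on the weight basis then yields $\underline{R}$ as an explicit $4\times4$ matrix, and $L^+=(\mathrm{id}\otimes\rho_V)(\mathcal{R})$, $L^-=(\rho_V\otimes\mathrm{id})(\mathcal{R}^{-1})$ as the triangular matrices with entries $l^{\pm i}_i=K_i^{\mp1}$, $l^{+1}_2=(q-q^{-1})K_1^{-1}E$, $l^{-2}_1=-(q-q^{-1})FK_1$. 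This already establishes the final displayed formula of the statement.

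Next I would define $\phi\colon\tilde U\to U$ on generators by these same formulas. It is a well-defined homomorphism because the entries of $L^\pm$ satisfy the matrix relations $L_1^\pm L_2^\pm\underline{R}=\underline{R}L_2^\pm L_1^\pm$ and $L_1^-L_2^+\underline{R}=\underline{R}L_2^+L_1^-$: these are obtained from the quantum Yang--Baxter equation $\mathcal{R}_{12}\mathcal{R}_{13}\mathcal{R}_{23}=\mathcal{R}_{23}\mathcal{R}_{13}\mathcal{R}_{12}$ together with the quasi-triangularity axioms $(\Delta\otimes\mathrm{id})(\mathcal{R})=\mathcal{R}_{13}\mathcal{R}_{23}$ and $(\mathrm{id}\otimes\Delta)(\mathcal{R})=\mathcal{R}_{13}\mathcal{R}_{12}$, by evaluating one or two of the tensor legs through $\rho_V$; the relations $l^{+i}_il^{-i}_i=1$ follow from $\mathcal{R}\mathcal{R}^{-1}=1$. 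The identities $S(L^\pm)=(L^\pm)^{-1}$, $\Delta(l^{\pm i}_j)=\sum_k l^{\pm i}_k\otimes l^{\pm k}_j$ and $\varepsilon(l^{\pm i}_j)=\delta_{ij}$ follow from $(S\otimes\mathrm{id})(\mathcal{R})=\mathcal{R}^{-1}$, the quasi-triangularity axioms, and $(\varepsilon\otimes\mathrm{id})(\mathcal{R})=1$, so $\phi$ is a Hopf algebra map.

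Conversely I would define $\psi\colon U\to\tilde U$ on generators by $K_i\mapsto l^{-i}_i$, $E\mapsto(q-q^{-1})^{-1}l^{-1}_1l^{+1}_2$, $F\mapsto -(q-q^{-1})^{-1}l^{-2}_1l^{+1}_1$. Expanding the three matrix RLL equations into the eleven explicit scalar relations listed in the statement, one checks directly that these images satisfy the defining relations of $U$: the $K_i$-conjugation relations come from the rows $l^{+1}_1l^{+1}_2=q^{-1}l^{+1}_2l^{+1}_1$, $l^{+2}_2l^{+1}_2=ql^{+1}_2l^{+2}_2$, $l^{+1}_1l^{-2}_1=ql^{-2}_1l^{+1}_1$, $l^{+2}_2l^{-2}_1=q^{-1}l^{-2}_1l^{+2}_2$, together with $l^{\pm1}_1l^{\pm2}_2=l^{\pm2}_2l^{\pm1}_1$, while the $EF-FE$ relation is exactly the rewriting of $l^{-2}_1l^{+1}_2-l^{+1}_2l^{-2}_1=(q-q^{-1})(l^{+2}_2l^{-1}_1-l^{+1}_1l^{-2}_2)$. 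Thus $\psi$ is a well-defined algebra map, and it is a coalgebra map by direct inspection of the coproducts. Finally, $\phi\circ\psi$ and $\psi\circ\phi$ act as the identity on the respective generating sets by the explicit formulas of the first step, so $\phi$ and $\psi$ are mutually inverse Hopf algebra isomorphisms.

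The main obstacle is bookkeeping rather than conceptual difficulty: computing $\underline{R}$, $L^\pm$ and $\mathcal{R}^{-1}$ in $\rho_V$ accurately, translating the matrix equations into the listed scalar relations with correct powers of $q$, and verifying that these scalar relations are \emph{equivalent} to — not merely implied by — the defining relations of $U$ under $\phi$ and $\psi$. Once both homomorphisms are constructed and shown to be mutually inverse on generators, bijectivity and the claim that the RLL presentation imposes no extra relations follow automatically.
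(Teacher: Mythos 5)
The paper gives no proof of this proposition: it is quoted verbatim from Klimyk--Schm\"udgen [KS], Theorem 8.33, and the standard proof there is exactly the one you reconstruct (compute $\underline R$ and $L^\pm$ from the truncated $R$-matrix series, derive the $RLL$ relations from the quasi-triangularity axioms and Yang--Baxter, build mutually inverse algebra maps $\phi,\psi$, and check the Hopf structure via $(\Delta\otimes\mathrm{id})\mathcal R=\mathcal R_{13}\mathcal R_{23}$, $(\varepsilon\otimes\mathrm{id})\mathcal R=1$, $(S\otimes\mathrm{id})\mathcal R=\mathcal R^{-1}$). Your proposal is correct and follows the same route; the only slip is a miscount (``eleven'' scalar relations where the display lists nine), which is immaterial.
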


\subsection{The adjoint action and the quantum coordinate algebra $\cO_q(GL_2)$}\label{sec-adjoint-action-Oq}

The Hopf algebra $U$ 
acts on itself by the \emph{adjoint action}, defined as 
\begin{align*}
x \rhd y=x_{(1)}yS(x_{(2)}) \quad \textrm{ for } \quad \Delta(x)=\sum x_{(1)}\otimes x_{(2)}.
\end{align*}
The locally finite subspace with respect to this action is the subalgebra generated by the entries of the matrix
$L^+S(L^-)$, and by $(l^{+1}_1l^{+2}_2)^{-1}.$  This subspace has another interpretation, as the quantum algebra of functions on the group $GL_2$ (see Proposition \ref{locally-finite}:

\begin{definition} \label{def-O_q}
Let $\cO^+_q(GL_2)$ be the algebra with 
\begin{itemize} 
\item generators $l^i_j$, organized in a matrix $L=\left[\begin{array}{cc}\ell^1_1 & \ell^1_2 \\ \ell^2_1 & \ell^2_2 \end{array}\right]$; 
\item relations given by the entries of the matrix equation
$$R_{21}L_1RL_2=L_2R_{21}L_1R,$$
\item which may be written explicitly as:
\begin{align}
\begin{aligned}\label{relns-aa}
&\ell^1_2\ell^1_1 = \ell^1_1\ell^1_2 + (1-q^{-2})\ell^1_2\ell^2_2 & \quad  &\ell^2_2\ell^1_1 = \ell^1_1\ell^2_2& &\\
&\ell^2_1\ell^1_1 = \ell^1_1\ell^2_1 - (1-q^{-2})\ell^2_2\ell^2_1& \quad  &\ell^2_2\ell^1_2 = q^2\ell^1_2\ell^2_2& &\\
&\ell^2_1\ell^1_2 = \ell^1_2\ell^2_1 + (1-q^{-2})(\ell^1_1\ell^2_2 - \ell^2_2 \ell^2_2)& \quad  &\ell^2_2\ell^2_1 = q^{-2}\ell^2_1\ell^2_2\\
\end{aligned}
\end{align}

\end{itemize}
\end{definition}

\begin{proposition} The element ${\det}_q(L):=\ell^1_1\ell^2_2-q^2\ell^1_2\ell^2_1$ is central in $\cO^+_q(GL_2)$.\end{proposition}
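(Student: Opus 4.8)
The plan is to verify directly that ${\det}_q(L)=\ell^1_1\ell^2_2-q^2\ell^1_2\ell^2_1$ commutes with each of the four generators $\ell^1_1,\ell^1_2,\ell^2_1,\ell^2_2$; since these generate $\cO^+_q(GL_2)$, this establishes centrality. For a fixed generator $g$ I would expand $[{\det}_q(L),g]=[\ell^1_1\ell^2_2,g]-q^2[\ell^1_2\ell^2_1,g]$, use the Leibniz identity $[ab,g]=a[b,g]+[a,g]b$, rewrite each elementary commutator by moving $g$ across $\ell^i_j$ using exactly one of the relations in \eqref{relns-aa}, and finally put every resulting monomial into a fixed normal order (say $\ell^1_1$ leftmost, then $\ell^1_2$, then $\ell^2_1$, then $\ell^2_2$). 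The claim is then the vanishing of an explicit element of the free span of ordered monomials, which I expect to follow by a short cancellation in each of the four cases.

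I would begin with the diagonal generators $g=\ell^2_2$ and $g=\ell^1_1$. For $\ell^2_2$ the relations $\ell^2_2\ell^1_1=\ell^1_1\ell^2_2$, $\ell^2_2\ell^1_2=q^2\ell^1_2\ell^2_2$, $\ell^2_2\ell^2_1=q^{-2}\ell^2_1\ell^2_2$ say that $\ell^2_2$ $q$-commutes with the off-diagonal entries with reciprocal scalars $q^{\pm2}$; hence it commutes with $\ell^1_1\ell^2_2$ and with the product $\ell^1_2\ell^2_1$ (the factors $q^2$ and $q^{-2}$ cancel), so $[{\det}_q(L),\ell^2_2]=0$ at once. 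The case $g=\ell^1_1$ is mildly more involved, using $\ell^1_2\ell^1_1=\ell^1_1\ell^1_2+(1-q^{-2})\ell^1_2\ell^2_2$ and $\ell^2_1\ell^1_1=\ell^1_1\ell^2_1-(1-q^{-2})\ell^2_2\ell^2_1$; here the two correction terms produced when reordering $\ell^1_2\ell^2_1\ell^1_1$ turn out to cancel against each other, and $\det_q(L)\,\ell^1_1=\ell^1_1\,\det_q(L)$ drops out.

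The main obstacle is the off-diagonal generators $g=\ell^1_2$ and $g=\ell^2_1$. Moving $\ell^1_2$ past $\ell^2_1$ via $\ell^2_1\ell^1_2=\ell^1_2\ell^2_1+(1-q^{-2})(\ell^1_1\ell^2_2-(\ell^2_2)^2)$ introduces the extra quadratic terms $\ell^1_1\ell^2_2$ and $(\ell^2_2)^2$, and moving $\ell^1_2$ past $\ell^1_1$ introduces a further $(1-q^{-2})\ell^1_2\ell^2_2$; one then has to check that these corrections, together with the contribution of $[\ell^1_1\ell^2_2,\ell^1_2]$, conspire to cancel after normal ordering (the coefficients collapse because $q^2-q^2(1-q^{-2})=1$ and $(1-q^{-2})^2-(1-q^{-2})=-(1-q^{-2})q^{-2}$). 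I would cross-check this bookkeeping using the equivalent form ${\det}_q(L)=q^2\bigl(\ell^2_2\ell^1_1-\ell^2_1\ell^1_2\bigr)-(q^2-1)(\ell^2_2)^2$, derived from the mixed relation, which makes commutation with $\ell^2_1$ more transparent; the case $g=\ell^2_1$ is then settled by an entirely analogous calculation.

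A slicker, essentially computation-free alternative would be to recognize \eqref{relns-aa} as the reflection equation $R_{21}L_1RL_2=L_2R_{21}L_1R$, define ${\det}_q(L)$ intrinsically by applying the rank-one $q$-antisymmetrizer $A\in\mathrm{End}(V\otimes V)$ to the ``quantum matrix'' $L_1\widehat R L_2$, and deduce centrality from $A$-invariance properties of the R-matrix, as in the general theory of quantum determinants of reflection equation algebras. For $GL_2$, however, the $2\times 2$ direct verification above is short enough that I would simply carry it out.
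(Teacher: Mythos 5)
The paper in fact states this proposition with no proof at all, so there is no argument of the authors' to compare against; one simply has to supply one, and yours is correct. Your direct strategy — check commutation with each generator, putting monomials in a fixed normal order via the six relations \eqref{relns-aa} and checking cancellation — is exactly what is required, and your bookkeeping checks out. In particular: $\ell^2_2$ commutes with $\ell^1_1\ell^2_2$ trivially and with $\ell^1_2\ell^2_1$ because the $q^{\pm 2}$ factors cancel; for $\ell^1_1$ the two correction terms $\pm(1-q^{-2})\ell^1_2\ell^2_2\ell^2_1$ from reordering $\ell^1_2\ell^1_1$ and $\ell^1_1\ell^2_1$ indeed cancel; and for $\ell^1_2$ one finds
\begin{align*}
[\ell^1_1\ell^2_2,\,\ell^1_2] &= (q^2-1)\,\ell^1_1\ell^1_2\ell^2_2 - (1-q^{-2})\,\ell^1_2(\ell^2_2)^2,\\
[\ell^1_2\ell^2_1,\,\ell^1_2] &= (1-q^{-2})\,\ell^1_1\ell^1_2\ell^2_2 - (1-q^{-2})q^{-2}\,\ell^1_2(\ell^2_2)^2,
\end{align*}
so $[\det_q(L),\ell^1_2]=[\ell^1_1\ell^2_2,\ell^1_2]-q^2[\ell^1_2\ell^2_1,\ell^1_2]=0$. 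The case $\ell^2_1$ is analogous, and your rewritten form $\det_q(L)=q^2(\ell^2_2\ell^1_1-\ell^2_1\ell^1_2)-(q^2-1)(\ell^2_2)^2$ is also a correct consequence of the mixed relation. The ``slicker alternative'' you sketch — deducing centrality from the reflection-equation form $R_{21}L_1RL_2=L_2R_{21}L_1R$ and the $q$-antisymmetrizer — is indeed the conceptual reason the claim holds and is almost certainly why the authors felt entitled to omit a proof; but for $2\times 2$ the hands-on verification you outline is the more self-contained route.
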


\begin{definition} Let 
\begin{align*}
\cO_q(GL_2)&=\cO^+_q(GL_2)[{\det}_q(L)^{-1}]
\end{align*}
denote the algebra obtained from $\cO^+_q(GL_2)$ localizing at the central element ${\det}_q(L)$. We call $\cO_q(GL_2)$ the quantum coordinate algebra of $GL_2$. \footnote{In fact there are two candidates for this name, and for the notation $\cO_q(G)$, the other being the restricted dual to $U_q$, also known as the Fadeev-Reshetikhin-Takhtajan (FRT) algebra.  We will not make use of the FRT algebra in this paper.}
\end{definition}

\begin{proposition}[\cite{KS}]\label{locally-finite}
There is a unique algebra homomorphism, $\phi:\cO_q(GL_2)\hookrightarrow U$, defined on generators by $L\mapsto L^+S(L^-)$, i.e.
$$\phi: \ell^i_j\mapsto \sum_k l^{+i}_k S(l^{-k}_j).$$
Moreover, this is an algebra embedding, whose image is contained in the locally finite part of $U$ with respect to the adjoint action.
\end{proposition}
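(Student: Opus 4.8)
Write $M:=L^+S(L^-)=L^+(L^-)^{-1}\in\mathrm{Mat}_{2\times 2}(U)$, using $S(L^\pm)=(L^\pm)^{-1}$. The statement has three parts: that $\phi$ is a well-defined homomorphism $\cO^+_q(GL_2)\to U$; that it extends to $\cO_q(GL_2)$ with image inside the adjoint-locally-finite part $F_l(U)$; and that it is injective. For the first part it suffices, by Definition \ref{def-O_q}, to verify the single matrix identity $R_{21}M_1RM_2=M_2R_{21}M_1R$, i.e.\ that the entries $M^i_j$ satisfy \eqref{relns-aa}. I would derive this from the three $RLL$-relations for $L^\pm$ by the standard ``reflection equation from $RLL$'' manipulation: invert $L_1^-L_2^-\underline R=\underline R L_2^-L_1^-$ to obtain the relations among the entries of $(L^-)^{-1}$, substitute $M_i=L_i^+(L_i^-)^{-1}$ into $R_{21}M_1RM_2$, and commute all $L^+$-factors past all $(L^-)^{-1}$-factors using $L_1^+L_2^+\underline R=\underline R L_2^+L_1^+$, $L_1^-L_2^+\underline R=\underline R L_2^+L_1^-$, and the Yang--Baxter equation for $\underline R$; in rank one this can instead simply be checked by expanding both sides in $E,F,K_i$. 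Uniqueness of $\phi$ is clear since the $\ell^i_j$ generate.

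For the second part, a short computation gives ${\det}_q(M)=M^1_1M^2_2-q^2M^1_2M^2_1=(K_1K_2)^{-2}$, a power of the central element $K_1K_2$, hence a unit in $U$; so $\phi$ extends uniquely to the Ore localization $\cO_q(GL_2)=\cO^+_q(GL_2)[{\det}_q(L)^{-1}]$ via $\phi({\det}_q(L)^{-1})=(K_1K_2)^2$. That the image lies in $F_l(U)$ is then immediate from the discussion preceding Definition \ref{def-O_q}, which identifies $F_l(U)$ with the subalgebra generated by the entries of $L^+S(L^-)$ together with $(l^{+1}_1l^{+2}_2)^{-1}=K_1K_2$: the image of $\phi$ is exactly the subalgebra generated by the $M^i_j$ and $(K_1K_2)^{\pm2}$. (Concretely, the coproduct and antipode of $L^\pm$ give $x\rhd M^i_j=\sum_{k,l}\rho_V(x_{(1)})^i_k\,M^k_l\,\rho_V(S(x_{(2)}))^l_j$, so $\operatorname{span}\{M^i_j\}\cong V\otimes V^*$ is a finite-dimensional $\operatorname{ad}$-submodule, and ${\det}_q(M)^{\pm1}$ is central.)

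The genuinely substantial part is injectivity. Since ${\det}_q(L)$ is central and maps to a unit, it suffices to prove $\phi$ injective on $\cO^+_q(GL_2)$. The relations \eqref{relns-aa} are homogeneous quadratic and, by a diamond-lemma check, confluent, so $\cO^+_q(GL_2)$ has the PBW basis $\{(\ell^1_1)^a(\ell^1_2)^b(\ell^2_1)^c(\ell^2_2)^d\}$, and the task is to show that the $(M^1_1)^a(M^1_2)^b(M^2_1)^c(M^2_2)^d$ are linearly independent in $U$. The adjoint torus weight $(b-c)\alpha$ and the total degree $a+b+c+d$ separate many of these monomials, but not all — e.g.\ $\ell^1_2\ell^2_1$ and $\ell^1_1\ell^2_2$ agree on both — so one cannot conclude by the crudest ``top $E,F$-degree'' leading-term estimate in $U$. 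I would close the gap in one of two ways: either exhibit a fine enough filtration of $U$ relative to which the full PBW-expansions of the $(M^1_1)^a(M^1_2)^b(M^2_1)^c(M^2_2)^d$ have pairwise distinct leading terms; or, more conceptually, observe that $\phi$ is equivariant for the natural adjoint-type $U$-actions on both sides, that $\cO_q(GL_2)$ and $F_l(U)$ both decompose as $\bigoplus_\lambda V_\lambda\otimes V_\lambda^*$ under these actions, and apply Schur's lemma block by block, using that these two decompositions have matching multiplicities — the rank-one case of the Joseph--Letzter description of $F_l(U_q\mathfrak{gl}_2)$. This last input is the real content of the proposition, and is why it is attributed to \cite{KS}; the remaining steps are direct computations.
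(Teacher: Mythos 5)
The paper does not prove this proposition: it cites \cite{KS} directly, and the remark that follows it explicitly defers the hard content (identification of the image with the locally finite part) to \cite{JL} and \cite{KS}, Section 6.2.7. So there is no in-paper proof for you to match, and the relevant comparison is to the cited sources.

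Your outline is sound and follows the standard route. Parts (i) and (ii) are fine: the reflection-equation relations for $M=L^+S(L^-)$ do follow from the $RLL$ relations (and can alternatively be checked by hand in rank one, which the paper implicitly does via Lemma \ref{action-EFK-aij}); $\phi(\det_q L)=(l^{+1}_1 l^{+2}_2)^2=(K_1K_2)^{-2}$ is the computation the paper records in its remark, and it does make $\phi$ extend over the localization; and $\operatorname{span}\{M^i_j\}\cong V\otimes V^*$ is indeed a finite-dimensional $\operatorname{ad}$-submodule, which handles local finiteness of the image. You are also right that the image of $\phi$ is the subalgebra generated by the $M^i_j$ and $(K_1K_2)^{\pm2}$, a proper subalgebra of $F_l(U)$, which matches the paper's remark.

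The one genuine gap is that your injectivity argument is not carried out: you correctly identify it as the substantive point and correctly flag that the crude $\mathbb{Z}$-grading by $E,F$-degree alone is too coarse (your example $\ell^1_2\ell^2_1$ vs.\ $\ell^1_1\ell^2_2$ is apt), but you then leave the proof at ``one of two ways.'' Of the two, the second (equivariance of $\phi$, Peter--Weyl decomposition of $\cO_q(GL_2)$ and of $F_l(U)$ as $\bigoplus_\lambda V_\lambda\otimes V_\lambda^*$, and Schur's lemma block by block) is exactly the Joseph--Letzter/Klimyk--Schm\"udgen input the paper's citation is packaging; the first (a refined filtration of $U$) also works but requires some care because the $K$-parts, not just the $E,F$-degrees, must be tracked. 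Either route would close the proof, but as written the proposal establishes ``homomorphism into $F_l(U)$'' while only sketching ``embedding.''
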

\begin{remark}
It was proved in \cite{JL} (see also \cite{KS}, 6.2.7) that the analogous map $\phi$ for $\mathfrak{sl}_2$ (more generally, for a simply connected semi-simple group) is an isomorphism onto the locally finite part.  A straightforward computation gives that $\phi(\det_q(L)) = (l^{+1}_1l^{+2}_2)^2$.  Hence, while the image of $\phi$ does not contain the central element,
$$K_1K_2=l^{+1}_1l^{+2}_2=\sqrt{\operatorname{det}_q(L)},$$ comparison with Rosso's isomorphism for $\mathfrak{sl}_2$ gives that the locally finite part is generated by the image of $\phi$ and this central element.  The entire quantum group is furthermore generated by adjoining $K_1$.
\end{remark}

Using this embedding, we can pull back the restriction of the adjoint action of $U$ on the locally finite part of $U$, and get an action of $U$ on $\cO_q(GL_2)$. Unpacking these definitions and identifications, we have the following:

\begin{lemma}\label{action-EFK-aij}
The embedding $\cO_q(GL_2)\hookrightarrow U$ of the quantum coordinate algebra into the quantum enveloping algebra and the resulting restriction of the adjoint action are given on generators by the following formulas. 
\begin{itemize}
\item The embedding: 
\begin{align*}
\ell^1_1&=K_1^{-2}+q^{-1}(q-q^{-1})^2K_1^{-1}K_2^{-1}EF, &  \ell^1_2&=q^{-1}(q-q^{-1})K_1^{-1}K_2^{-1}E,\\
\ell^2_1&=(q-q^{-1})K_2^{-2}F,  & \ell^2_2&=K_2^{-2}.
\end{align*}
\item The action: 
\begin{align*}
K_m\rhd \ell^i_j&=q^{\delta_{im}-\delta_{jm}}\ell^i_j&\\
E\rhd \ell^i_j&=\delta_{j=1}\ell^i_2-\delta_{i=2}q^{2\delta_{j=2}}\ell^1_j\\
F\rhd \ell^i_j&=\delta_{j=2}q^{2\delta_{i=2}-1}a^i_1-\delta_{i=1}q^{-1}\ell^2_j.
\end{align*}
\end{itemize}
\end{lemma}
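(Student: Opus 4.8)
The plan is to extract both sets of formulas directly from Proposition \ref{locally-finite} together with the $L$-matrix presentation of $U$. I would first handle the embedding. By Proposition \ref{locally-finite}, $\phi$ is the algebra homomorphism with $\phi(\ell^i_j)=\sum_k l^{+i}_kS(l^{-k}_j)$, i.e.\ $\phi(L)=L^+S(L^-)$ as a matrix identity. Substituting the identification of $U$ with its $L$-matrix presentation — $l^{\pm i}_i=K_i^{\mp1}$, $l^{+1}_2=(q-q^{-1})K_1^{-1}E$, $l^{-2}_1=-(q-q^{-1})FK_1$ — gives $L^+$ upper triangular and $L^-$ lower triangular with entries in $U$. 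Inverting the triangular matrix $L^-$ yields $S(L^-)=(L^-)^{-1}$ explicitly, and multiplying $L^+\cdot S(L^-)$ gives the four entries; each is then reduced to the stated normal form using only the Cartan relations, for instance $EK_2^{-1}=q^{-1}K_2^{-1}E$. Along the way one notes $\phi(\det_q(L))=(K_1K_2)^{-2}$, which is central and invertible in $U$, so $\phi$ extends over the localization defining $\cO_q(GL_2)$.

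For the adjoint action, recall that the $U$-action on $\cO_q(GL_2)$ is by construction the pullback along $\phi$ of the adjoint action on $U$, so it suffices to evaluate $x\rhd\phi(\ell^i_j)=\sum x_{(1)}\,\phi(\ell^i_j)\,S(x_{(2)})$ in $U$ and recognize the answer among the $\phi(\ell^i_j)$ — which incidentally also re-confirms that $\phi(\cO_q(GL_2))$ is ad-stable. For $x=K_m$ the coproduct is group-like, so $K_m\rhd y=K_myK_m^{-1}$, and the weight $q^{\delta_{im}-\delta_{jm}}$ is read off immediately from the expression of $\ell^i_j$ in $E,F,K_1,K_2$ obtained in the first step. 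For $x=E$, the coproduct $\Delta(E)=E\otimes K_1K_2^{-1}+1\otimes E$ and antipode $S(E)=-EK_1^{-1}K_2$ give $E\rhd y=[E,y]\,K_1^{-1}K_2$; since each $\ell^i_j$ has degree at most one in $E$ and in $F$, the commutator only invokes the Cartan relations and the single relation $EF-FE=(K_1K_2^{-1}-K_1^{-1}K_2)/(q-q^{-1})$, and re-collecting into normal form produces the stated formula for $E\rhd\ell^i_j$. The case $x=F$ is entirely parallel, with $\Delta(F)=F\otimes1+K_1^{-1}K_2\otimes F$ and $S(F)=-K_1K_2^{-1}F$ giving $F\rhd y=Fy-(K_1^{-1}K_2)\,y\,(K_1K_2^{-1})F$.

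The only genuinely mechanical step is the normal reordering in these last commutator computations, where one must get the $q$-powers such as $q^{2\delta_{j=2}}$ and the precise index shifts right; done blindly this is error-prone. I would organize and simultaneously cross-check it using the matrix-coefficient picture of the locally finite part, in which $\ell^i_j$ corresponds to the coordinate $X\mapsto X^i_j$ on $GL_2$ and the adjoint action is the $q$-deformation of conjugation $X\mapsto gXg^{-1}$: this forces the shape of the answers — $E$ and $F$ each shifting one row and one column index with $q$-scalar corrections, $K_m$ acting diagonally by the difference of the relevant weights — leaving only the scalars to be pinned down. A final consistency check is that the operators produced must act by derivations of $U$ preserving the defining relations \eqref{relns-aa} of $\cO_q(GL_2)$, which determines them uniquely once their values on generators are fixed. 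I anticipate no conceptual obstacle: this lemma is essentially bookkeeping that repackages Proposition \ref{locally-finite} and the $L$-matrix presentation in the concrete form used later.
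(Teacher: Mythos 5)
Your proposal is correct and is exactly the computation the paper intends: the lemma is stated after the sentence ``Unpacking these definitions and identifications, we have the following,'' with no further proof given, so the expected argument is precisely the direct substitution from Proposition \ref{locally-finite} and the $L$-matrix presentation, followed by normal ordering with the Cartan and Serre-type relations and by the Hopf-algebra formulas $E\rhd y=[E,y]K_1^{-1}K_2$ and $F\rhd y=Fy-(K_1^{-1}K_2)y(K_1K_2^{-1})F$. Spot-checking your scheme (e.g.\ $\ell^1_1=K_1^{-2}+q^{-1}(q-q^{-1})^2K_1^{-1}K_2^{-1}EF$ from inverting the lower-triangular $L^-$, or $E\rhd\ell^2_2=(1-q^2)K_1^{-1}K_2^{-1}E=-q^2\ell^1_2$) reproduces the stated formulas, so there is no gap.
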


\begin{lemma}\label{trdetarecenter}
The center of $\cO_q(GL_2)$ is equal to the space $\cO_q(GL_2)^U$ of $U$-invariants. It is the subalgebra generated by the q-trace ${\tr}_q(L)$, the q-determinant ${\det}_q(L)$, and its inverse, where the q-trace and the q-determinant are defined as:
\begin{align}
\begin{aligned}\label{deftrdet}
{\tr}_q(L)&=\ell^1_1+q^{-2}\ell^2_2 \\
{\det}_q(L)&=\ell^1_1\ell^2_2-q^2\ell^1_2\ell^2_1.
\end{aligned}
\end{align}
\end{lemma}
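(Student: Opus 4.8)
The plan is to break the statement into three parts: (i) the subalgebra $Z$ generated by $\tr_q(L)$, $\det_q(L)$, $\det_q(L)^{-1}$ lies inside the $U$-invariants $\cO_q(GL_2)^U$; (ii) the $U$-invariants coincide with the center $Z(\cO_q(GL_2))$; and (iii) conversely $\cO_q(GL_2)^U\subseteq Z$. Part (i) is a direct check using the explicit action formulas of Lemma \ref{action-EFK-aij}: since $K_m\rhd \ell^i_j = q^{\delta_{im}-\delta_{jm}}\ell^i_j$, the diagonal entries $\ell^1_1,\ell^2_2$ are $K$-weight zero, while $\ell^1_2$ has weight $(1,-1)$ and $\ell^2_1$ weight $(-1,1)$; so $\tr_q(L)=\ell^1_1+q^{-2}\ell^2_2$ is $K$-invariant, and $\det_q(L)=\ell^1_1\ell^2_2-q^2\ell^1_2\ell^2_1$ is $K$-invariant since each product has total weight zero. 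One then checks $E\rhd\tr_q(L)=0$ and $F\rhd\tr_q(L)=0$ directly from the $E,F$-action formulas and the fact that $\rhd$ acts on products via the coproduct; and $\det_q(L)$, being (a square root of the image of) a central group-like-type element, is likewise annihilated by $E$ and $F$. Since invariants form a subalgebra and $\det_q(L)$ is invertible, its inverse is invariant too, giving $Z\subseteq \cO_q(GL_2)^U$.

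For part (ii), I would invoke the general principle that for the adjoint action of a quantum group, the locally finite part has center equal to the ad-invariants; more concretely, via the embedding $\phi:\cO_q(GL_2)\hookrightarrow U$ of Proposition \ref{locally-finite}, an element $z$ is ad-invariant iff $x_{(1)}\,\phi(z)\,S(x_{(2)})=\varepsilon(x)\phi(z)$ for all $x\in U$, which by a standard manipulation (using that $\Delta(x)$ for $x$ ranging over $U$ together with the antipode generates enough) is equivalent to $\phi(z)$ being central in $U$; and central elements of $U$ commute in particular with the image of $\cO_q(GL_2)$, so land in its center. The reverse inclusion center $\subseteq$ invariants uses that $\cO_q(GL_2)$ is generated as an algebra by the $\ell^i_j$ together with $\det_q(L)^{-1}$, and the adjoint action of the generators $E,F,K_i$ of $U$ on $\cO_q(GL_2)$ can be rewritten — via the $L$-matrix presentation and the defining relations \eqref{relns-aa} — as commutators (up to $K$-conjugation) with specific elements of $\cO_q(GL_2)$ itself; hence anything central commutes with those elements and is therefore invariant. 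This is the standard "the adjoint action is inner on the locally finite part" argument.

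Part (iii) is the substantive computation. Here I would use the $\ZZ^2$-grading on $\cO_q(GL_2)$ by $K$-weight: $\deg\ell^1_1=\deg\ell^2_2=(0,0)$, $\deg\ell^1_2=(1,-1)$, $\deg\ell^2_1=(-1,1)$, and $\deg\det_q(L)=(0,0)$. A $U$-invariant is in particular $K$-invariant, hence has total weight zero; writing such an element in the PBW basis of $\cO^+_q(GL_2)$ (ordered monomials in $\ell^1_1,\ell^2_2,\ell^1_2,\ell^2_1$, localized at $\det_q(L)$), weight-zero forces the exponents of $\ell^1_2$ and $\ell^2_1$ to be equal. One then shows, by an inductive argument on the degree in $\ell^1_2,\ell^2_1$ (peeling off the top term and applying $E\rhd$ and $F\rhd$, which lower these degrees and must vanish), that $E$- and $F$-invariance forces the element to be a polynomial in $\tr_q(L)$ and $\det_q(L)^{\pm 1}$ — the relation $\ell^1_2\ell^2_1 = q^{-2}\det_q$ combination with $\ell^1_1,\ell^2_2$ lets one rewrite weight-zero monomials in terms of $\ell^1_1+q^{-2}\ell^2_2$, $\ell^1_1\ell^2_2$, and $\det_q$, and then $E,F$-invariance pins down the symmetric combination.

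The main obstacle is part (iii): controlling the $E$- and $F$-invariance conditions against the PBW basis. The non-commutativity means the highest-degree-in-$\ell^1_2$ term of $E\rhd z$ is not simply read off, and one must be careful that the relations \eqref{relns-aa} do not mix degrees in an uncontrolled way; I expect one needs a clean choice of ordered basis and a filtration argument (e.g. filter by the $\ell^1_2$-degree, pass to the associated graded where the action becomes the classical $\mathfrak{gl}_2$ coadjoint action on $\cO(GL_2)$, and there invoke the classical statement that the invariants are $\CC[\tr,\det^{\pm 1}]$), then lift back. Alternatively, one cites the known result (\cite{JL}, \cite{KS} 6.2.7) that $\phi$ identifies $\cO_q(GL_2)$-type algebras with the locally finite part, whose center is classically understood, and deduces (iii) by comparison with the $\mathfrak{sl}_2$ case plus the adjoined $K_1K_2=\sqrt{\det_q(L)}$; this is likely the shortest route and the one I would actually write.
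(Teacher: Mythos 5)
Your proposal is correct and, once you settle on route (a) or (b) for part (iii), takes essentially the same approach as the paper, which simply observes that flatness of the deformation $\cO_q(GL_2)$ over $\cO(G)$ forces the invariant subalgebra to be generated (like its classical limit) by one element in degree one, one in degree two, and the inverse of the determinant, and then identifies those generators by direct computation. You supply more scaffolding than the paper does — in particular a Hopf-algebraic argument that the center equals the $U$-invariants, a step the paper leaves implicit — but the load-bearing ``reduce to the classical case via flatness'' idea is the same, and your preferred shortest route via Joseph--Letzter/Klimyk--Schm\"udgen is a citation of the same circle of results.
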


\begin{proof} Because $\cO_q(GL_2)$ is a flat deformation of $\cO(G)$, it follows that the subalgebra of invariants is generated by a unique element in degree one, and two.  The formulas here follow by direct computation.
\end{proof}

\begin{remark} Up to changes in convention, these formulas first appeared in \cite{K}.  For $N>2$, the description of the center of $\cO_q(GL_N)$, and hence $U_q\mathfrak{gl}_N$, in terms of q-deformed minors is taken up in \cite{JW}.
\end{remark}

\subsection{The algebra $\cD_q(GL_2)$ of quantum differential operators}

Next, we recall the definition of the algebra of polynomial quantum differential operators on $GL_2$. All the conventions follow \cite{J}, and correspond to the quiver $\overset{v}{\bullet}\overset{e}{\rightloop}$ in their notation.  They first appeared in this formulation in \cite{VV}.

\begin{definition}\label{def-D_q}
We let $\cD^+_q(GL_2)$ denote the algebra with:
\begin{itemize} 
\item generators $a^i_j$ and $\partial^i_j$, $i,j=1,2$, organized in matrices $$A=\left[\begin{array}{cc}a^1_1 & a^1_2 \\ a^2_1 & a^2_2 \end{array}\right]\quad \textrm{and}\quad D=\left[\begin{array}{cc}\partial^1_1 & \partial^1_2 \\ \partial^2_1 & \partial^2_2 \end{array}\right];$$
\item relations given by the entries of the following matrix equations
\begin{align}
R_{21}A_1RA_2&=A_2R_{21}A_1R \label{A-cross-matrix} \\
R_{21}D_1RD_2&=D_2R_{21}D_1R\label{D-cross-matrix} \\
R_{21}D_1RA_2&=A_2R_{21}D_1R_{21}^{-1}, \label{AD-cross-matrix}
\end{align}

\item which may be written out explicitly as follows:

\begin{align}
\begin{aligned}\label{relns-aa}
&a^1_2a^1_1 = a^1_1a^1_2 + (1-q^{-2})a^1_2a^2_2 & \quad  &a^2_2a^1_1 = a^1_1a^2_2& &\\
&a^2_1a^1_1 = a^1_1a^2_1 - (1-q^{-2})a^2_2a^2_1& \quad  &a^2_2a^1_2 = q^2a^1_2a^2_2& &\\
&a^2_1a^1_2 = a^1_2a^2_1 + (1-q^{-2})(a^1_1a^2_2 - a^2_2 a^2_2)& \quad  &a^2_2a^2_1 = q^{-2}a^2_1a^2_2\\
\end{aligned}
\end{align}

\begin{align}
\begin{aligned}\label{relns-dd}
&\d^1_2\d^1_1 = \d^1_1\d^1_2 + (1-q^{-2})\d^1_2\d^2_2 & \quad  &\d^2_2\d^1_1 = \d^1_1\d^2_2& &\\
&\d^2_1\d^1_1 = \d^1_1\d^2_1 - (1-q^{-2})\d^2_2\d^2_1& \quad  &\d^2_2\d^1_2 = q^2\d^1_2\d^2_2& &\\
&\d^2_1\d^1_2 = \d^1_2\d^2_1 + (1-q^{-2})(\d^1_1\d^2_2 - \d^2_2 \d^2_2)& \quad  &\d^2_2\d^2_1 = q^{-2}\d^2_1\d^2_2\\
\end{aligned}
\end{align}

\begin{align}
\begin{aligned}\label{relns-da}
&\d^1_1\cdot a^1_1= - (1-q^{-2})\cdot \d^1_2\cdot a^2_1 + q^{-2}\cdot a^1_1\cdot \d^1_1 + (q^{-2}-q^{-4})\cdot a^1_2\cdot \d^2_1\\
&\d^1_1\cdot a^1_2= (q^{-2}-1)\cdot \d^1_2\cdot a^2_2 + q^{-2}\cdot a^1_2\cdot \d^1_1\\
&\d^1_1\cdot a^2_1= (1-q^2)\cdot \d^2_1\cdot a^1_1 - (q-q^{-1})^2\cdot \d^2_2\cdot a^2_1 + a^2_1\cdot \d^1_1 + (1-q^{-2})\cdot a^2_2\cdot \d^2_1\\
&\d^1_1\cdot a^2_2=  (1-q^2)\cdot \d^2_1\cdot a^1_2 - (q-q^{-1})^2\cdot \d^2_2\cdot a^2_2 + a^2_2\cdot \d^1_1\\
&\d^1_2\cdot a^1_1= a^1_1\cdot \d^1_2 + (1-q^{-2})\cdot a^1_2\cdot \d^2_2 + (q^{-2} - 1)\cdot a^1_2\cdot \d^1_1\\
&\d^1_2\cdot a^1_2=  q^{-2}\cdot a^1_2\cdot \d^1_2\\
&\d^1_2\cdot a^2_1=  (q^{-2}-1)\cdot \d^2_2\cdot a^1_1 + a^2_1\cdot \d^1_2 + (q^{-2}-1)\cdot a^2_2\cdot \d^1_1 + (1-q^{-2})\cdot a^2_2\cdot \d^2_2\\
&\d^1_2\cdot a^2_2=  (q^{-2}-1)\cdot \d^2_2\cdot a^1_2 + q^{-2}\cdot a^2_2\cdot \d^1_2\\
&\d^2_1\cdot a^1_1= q^{-2}\cdot a^1_1\cdot \d^2_1 - (1-q^{-2})\cdot \d^2_2\cdot a^2_1\\
&\d^2_1\cdot a^1_2=  (q^{-2}-1)\cdot \d^2_2\cdot a^2_2 + a^1_2\cdot \d^2_1\\
&\d^2_1\cdot a^2_1=  q^{-2}\cdot a^2_1\cdot \d^2_1\\
&\d^2_1\cdot a^2_2=  a^2_2\cdot \d^2_1\\
&\d^2_2\cdot a^1_1=  a^1_1\cdot \d^2_2 + (1-q^2)\cdot a^1_2\cdot \d^2_1\\
&\d^2_2\cdot a^1_2=  a^1_2\cdot \d^2_2\\
&\d^2_2\cdot a^2_1=  q^{-2}\cdot a^2_1\cdot \d^2_2 + (q^{-2}-1)\cdot a^2_2\cdot \d^2_1\\
&\d^2_2\cdot a^2_2=  q^{-2}\cdot a^2_2\cdot \d^2_2.
\end{aligned}
\end{align}
\end{itemize}
\end{definition}

\begin{proposition} \label{Dq-PBW-prop}
\begin{enumerate}
\item  The elements
$${\det}_q(A):=a^1_1a^2_2-q^2a^1_2a^2_1, \qquad  {\det}_q(D):=\d^1_1\d^2_2-q^2\d^1_2\d^2_1$$
satisfy:
\begin{align*}
{\det}_q(A)a^i_j=a^i_j{\det}_q(A) \qquad & {\det}_q(D)a^i_j=q^{-2}a^i_j{\det}_q(D)\\
 \partial^i_j {\det}_q(A)=q^{-2}{\det}_q(A)\partial^i_j\qquad &  \partial^i_j {\det}_q(D)={\det}_q(D)\partial^i_j.
\end{align*}
In particular, the multiplicative set $\{{\det}_q(A)^m{\det}_q(D)^n| m,n\in \mathbb{N} \}$ satisfies the Ore condition. 

\item The set of elements 
$$a^{i_1}_{j_1}a^{i_2}_{j_2}\ldots a^{i_m}_{j_m}\d^{k_1}_{l_1}\d^{k_2}_{l_2}\ldots a^{k_n}_{l_n}$$ 
with $n,m\in \mathbb{N}_0$, $i_p,j_p,k_p,l_p \in \{1,2\}$, subject to the conditions
$$\forall p \quad (i_p<i_{p+1}) \textrm{ or } (i_p=i_{p+1} \textrm{ and } j_p\le j_{p+1}) $$
$$\forall p \quad (k_p<k_{p+1}) \textrm{ or } (k_p=k_{p+1} \textrm{ and } l_p\le l_{p+1}) $$
forms a basis of  $\cD^+_q(GL_2)$.
\end{enumerate}
\end{proposition}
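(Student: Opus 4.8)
The plan is to handle the two parts separately, since part (1) is a finite collection of explicit checks while part (2) is a Diamond Lemma argument. For part (1), I would first establish the commutation of $\det_q(A)$ with the $a^i_j$ and of $\det_q(D)$ with the $\d^i_j$: both follow from the fact that $\det_q$ of an $RTT$-matrix is central in the corresponding quantum coordinate subalgebra (the subalgebra generated by the $a^i_j$, resp.\ $\d^i_j$, is isomorphic to $\cO^+_q(GL_2)$ via the relations \eqref{relns-aa}, resp.\ \eqref{relns-dd}, so this is a restatement of the Proposition preceding Definition \ref{def-O_q}). The cross-relations — that $\det_q(D)$ $q^{-2}$-commutes with each $a^i_j$ and that each $\d^i_j$ $q^{-2}$-commutes with $\det_q(A)$ — would be proved directly from the mixed relations \eqref{relns-da}: expand $\det_q(D)\,a^i_j = (\d^1_1\d^2_2 - q^2\d^1_2\d^2_1)a^i_j$, push each $a^i_j$ to the left through the two $\d$'s using \eqref{relns-da}, and collect terms; the nonlinear correction terms are designed to cancel, leaving exactly $q^{-2}a^i_j\det_q(D)$. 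The same computation transposed gives the $\d^i_j \det_q(A)$ relation. These $q$-commutation relations immediately imply that powers of $\det_q(A)$ and $\det_q(D)$ form an Ore set: for any monomial $w$ in the generators of total $A$-degree $m$ and $D$-degree $n$, one has $\det_q(A)^a\det_q(D)^b\, w = q^{-2(an + bm)} w\, \det_q(A)^a\det_q(D)^b$ up to lower-order corrections, but in fact since $\det_q(A),\det_q(D)$ are (up to scalar) normal elements, left and right Ore conditions are automatic.

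For part (2), I would apply the Bergman Diamond Lemma to the presentation in Definition \ref{def-D_q}. First fix a monomial order: order the sixteen generators as $a^1_1 < a^1_2 < a^2_1 < a^2_2 < \d^1_1 < \d^1_2 < \d^2_1 < \d^2_2$, and use the degree-lexicographic order on words. Each of the relations in \eqref{relns-aa}, \eqref{relns-dd}, \eqref{relns-da} is then a rewriting rule whose leading term is a product of two out-of-order generators (for \eqref{relns-aa} and \eqref{relns-dd} these are the standard $RTT$ reduction rules for $\cO^+_q(GL_2)$; for \eqref{relns-da} every left-hand side $\d^i_j a^k_l$ is already the leading monomial, since all $a$'s precede all $\d$'s in our order, wait — we must order so that the reduced words have all $a$'s first, so the rule rewrites $\d^i_j a^k_l \leadsto (\text{terms with } a\text{'s before }\d\text{'s})$, consistent with putting $a < \d$). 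The reduced words under this rewriting system are exactly the claimed PBW monomials $a^{i_1}_{j_1}\cdots a^{i_m}_{j_m}\d^{k_1}_{l_1}\cdots\d^{k_n}_{l_n}$ with the $a$-block and $\d$-block each nondecreasing in the chosen order. It then suffices to verify that all overlap ambiguities resolve.

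The main obstacle is exactly this ambiguity resolution. The ambiguities split into three groups: (i) overlaps $x\,y\,z$ with $x,y,z$ all $a$-generators (and likewise all $\d$-generators) — these are precisely the ambiguities of the $RTT$ presentation of $\cO^+_q(GL_2)$, whose confluence is classical (it is what makes the $\cO_q$ PBW theorem work, and can be cited or checked in a handful of cases since there are only finitely many); (ii) mixed overlaps $a^i_j\,a^k_l\,\d^p_r$, where we may first reduce $a^i_j a^k_l$ (if out of order) or first reduce $a^k_l\d^p_r$ — confluence here encodes the compatibility of the $A$-$A$ relations with the $A$-$D$ relations, i.e.\ that the braiding relation \eqref{AD-cross-matrix} is consistent with \eqref{A-cross-matrix}; (iii) mixed overlaps $a^i_j\,\d^p_r\,\d^s_u$, dual to (ii), encoding compatibility with \eqref{D-cross-matrix}. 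Groups (ii) and (iii) are where the real content lies, but they are not genuinely new computations: the consistency is guaranteed conceptually because $\cD^+_q(GL_2)$ is, by construction (following \cite{VV}, \cite{J}), the cross-product-type algebra built from the $R$-matrix relations \eqref{A-cross-matrix}–\eqref{AD-cross-matrix}, and flatness of such algebras is known. In practice I would either invoke the flatness result from \cite{VV} or \cite{J} directly to conclude the PBW basis statement, or — if a self-contained proof is wanted — check the finitely many overlap diagrams (there are on the order of a few dozen, many related by the $1\leftrightarrow 2$ symmetry of the $R$-matrix) by the same style of bracket computation used for part (1). Either way, once confluence is established, the Diamond Lemma yields that the reduced words form a basis, which is the assertion of part (2).
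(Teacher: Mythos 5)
Your overall plan matches the paper's (one-line) proof: the paper records that claim (1) is a straightforward computation and claim (2) is Theorem 5.3 of \cite{J}, and you correctly identify both the direct verification for (1) and the citation to \cite{J}/\cite{VV} as the route for (2). Your sketch for part (1) is sound: the first and last relations do follow from centrality of $\det_q(L)$ in $\cO^+_q(GL_2)$ (though that proposition comes \emph{after}, not before, Definition \ref{def-O_q}), the mixed $q$-commutations are a finite check against \eqref{relns-da}, and the Ore property follows because $\det_q(A),\det_q(D)$ are normal up to a power of $q$.

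The Diamond Lemma alternative you sketch for part (2), however, has a genuine flaw and a couple of slips. There are eight generators, not sixteen. More seriously, the relations \eqref{relns-aa} are the \emph{reflection equation} relations $R_{21}L_1RL_2=L_2R_{21}L_1R$, not FRT-type $RTT$ relations, and their quadratic corrections break the naive degree-lexicographic order you propose. Concretely, with $a^1_1<a^1_2<a^2_1<a^2_2$ and deglex, the relation
$$a^2_1a^1_2 = a^1_2a^2_1 + (1-q^{-2})(a^1_1a^2_2 - a^2_2 a^2_2)$$
has $a^2_2a^2_2 > a^2_1a^1_2$, so the rule does not reduce the leading monomial and the rewriting system is not terminating in your order. (Reordering so that $a^2_2$ is smallest breaks $a^2_2a^1_1\to a^1_1a^2_2$ instead.) A workable Diamond Lemma argument would need a more carefully chosen semigroup order, e.g.\ a weighted order penalizing off-diagonal entries $a^1_2,a^2_1$ before breaking ties lexicographically; this is precisely the kind of care that Theorem 5.3 of \cite{J} packages. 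Since you already defer to that citation as the primary route, this does not sink your plan, but as written the self-contained alternative would not go through.
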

\begin{proof}
Claim (1) is a straightforward computation; claim (2) is Theorem 5.3 in \cite{J}.

\end{proof}

\begin{definition}We let $\cD_q(GL_2)$ denote the algebra,
\begin{align*} 
\cD_q(GL_2)&:=\cD^+_q(GL_2)[{\det}_q(A)^{-1}{\det}_q(D)^{-1}],
\end{align*}
obtained by localizing $\cD^+_q(GL_2)$ at the element ${\det}_q(A){\det}_q(D)$. We call this the algebra of polynomial quantum differential operators on $GL_2$.
\end{definition}

\begin{remark}The two evident copies of $\cO_q(GL_2)$ inside $\cD_q(GL_2)$, embedded by $L\mapsto A$ and $L\mapsto D$, carry an action of $U$ as described in Lemma \ref{action-EFK-aij}. This extends to the action of $U$ on the algebra $\cD_q(GL_2)$, a consequence of the construction of $\cD_q(GL_2)$ via a $U$-linear map, see Section 3 in \cite{J}. 
\end{remark}

The algebra $\cD_q(GL_2)$ is $\mathbb{Z}^2$-graded by 
\begin{align}
\label{grading-Dq}
{\deg}(a^i_j)=(1,0), \quad  {\deg}(\partial^i_j)=(0,1). 
\end{align}
This grading is inner, in the sense that for any $h\in \cD_q(GL_2)$ with $\deg(h)=(M,N)$ we have
\begin{align}
\label{grading-Dq-inner}
{\det}_q(A) \cdot h= q^{2N}h \cdot {\det}_q(A), \qquad {\det}_q(D) \cdot  h= q^{-2M}h \cdot {\det}_q(D).
\end{align}

\subsection{The quantum Hamiltonian reduction $\cD_q(GL_2)\dS_{\!\!\! \mathcal{I}_t} U$} In addition to the two embeddings of $\cO_q(GL_2)$ into  $\cD_q(GL_2)$ given by $L\mapsto A$ and $L\mapsto D$, we will use a third, more involved embedding, given by the ``quantum moment map". We will make use of the following quantum cofactor matrices $\widetilde{D}$ and $\widetilde{A}$, and the quantum inverse matrices, $D^{-1}$ and $A^{-1}$:
\begin{align}
&\widetilde{A}=\left[\begin{array}{cc}a^2_2 & -q^2a^1_2 \\ -q^2a^2_1 & q^2a^2_2+(1-q^2)a^2_2 \end{array}\right]  && \widetilde{D}=\left[\begin{array}{cc}\partial^2_2 & -q^2\partial^1_2 \\ -q^2\partial^2_1 & q^2\partial^2_2+(1-q^2)\partial^2_2 \end{array}\right] \notag\\
\label{def-tilde}
&A^{-1}=({\det}_q(A))^{-1}\tilde{A}  &&  D^{-1}=({\det}_q(D))^{-1}\tilde{D}.
\end{align}
The notation $A^{-1}$ and $D^{-1}$ is justified by the fact that
$AA^{-1}=A^{-1}A=I$ and $DD^{-1}=D^{-1}D=I.$
\begin{def/prop}[\cite{J}, Proposition 7.20, Definition 7.24]\label{def-muq}
We have a homomorphism of algebras,
$$\mu_q:\cO_q(GL_2) \to \cD_q(GL_2)\quad \textrm{defined on generators by} \quad \mu_q(L)=DA^{-1}D^{-1}A.$$
The homomorphism $\mu_q$ is called the \emph{quantum moment map}. 
\end{def/prop}

For $t\in \CC^\times$, let $X_t\in GL_2$ denote the matrix,
$$X_t:=\left(\begin{array}{cc}t^{-2}&0\\0&t^{2}\end{array}\right),$$
and let $O_t$ denote its conjugacy class in $GL_2$. 
The multiplicative Calogero-Moser variety is the set
$$C_t := \Big\{ (A,B) \in GL_2\times GL_2, \,\, \textrm{ such that } AB^{-1}A^{-1}B\in O_t\Big\}.$$

The $GL_2$-varieties $O_t$ and $C_t$ admit canonical equivariant $q$-deformations, constructed as follows.

\begin{definition}\label{def-Zt}
For $t\in\CC^\times$, let $\mathcal{I}_t\subset \cO_q(G)$ denote the two-sided ideal in $\cO_q(G)$ generated by the central elements:
$$Z_t:=\tr_q(L-q^4\cdot X_t) = \tr_q(L)-q^4\cdot(t^{-2}+q^{-2}t^2),\qquad \textrm{ and } \textrm{det}_q(L)-q^8.$$ 
Consider also the left $\cD_q(G)$ ideal  $\cD_q(G)\cdot \mu_q(\mathcal{I}_t)$. 
\end{definition}

\begin{remark}
The two-sided ideal $\mathcal{I}_t$ and the left ideal $\cD_q(G)\cdot \mu_q(\mathcal{I}_t)$ are ad-equivariant $q$-deformations of the defining ideals of $O_t$ and $C_t$, respectively. 
\end{remark}

We note that $\mu_q({\det}_q(L)-q^8)=0$, so that 
$$\cD_q(G)\cdot \mu_q(\mathcal{I}_t)=\cD_q(G)\cdot \mu_q(Z_t)$$ 
is in fact a principal left ideal. Since the principal generator $$\mu_q(Z_t)=\mathrm{tr}_q(\mu_q(L))-q^4\cdot(t^{-2}+q^{-2}t^2)$$ 
is $U$-invariant by Lemma \ref{trdetarecenter}, it follows that $\cD_q(G)\cdot \mu_q(Z_t)$ is preserved by $U$, and the action of $U$ descends to the quotient,
$$M:=\cD_q(GL_2)\Big/\cD_q(GL_2)\cdot \mu_q(Z_t).$$
\begin{definition}
The quantum Hamiltonian reduction of $\cD_q(GL_2)$ by the moment map $\mu_q$ along the orbit $O_t$ is defined to be the subspace
$$\cD_q(GL_2)\underset{\mathcal{I}_t}{\dS}U:=\left(\cD_q(GL_2)\Big/\cD_q(GL_2)\cdot \mu_q(Z_t)\right)^U$$ 
of $U$-invariants in the quotient of $\cD_q(GL_2)$ by the left ideal generated by $\mu_q(\mathcal{I}_t)$.
\end{definition}

Any endomorphism $\rho$ of the cyclic $\cD_q(GL_2)$-module $M$ is uniquely determined by $\rho(1)$, which should be invariant for the $U$-action, and should have the property that $\mathcal{I}_t\cdot \rho(1) \subset \mathcal{I}_t$.  The quantum moment map condition, $\mu(x).y = (x_{(1)} \cdot y) \mu(x_{(2)})$ ensures that the former property implies the latter, as it implies that $\mathcal{I}_t$ commutes with $\rho(1)$. Hence we have an isomorphism,
$$\cD_q(GL_2)\underset{\mathcal{I}_t}{\dS}U \cong \operatorname{End}\left(\cD_q(GL_2)\Big/\cD_q(GL_2)\cdot\mu_q(Z_t)\right).$$
We therefore regard the quantum Hamiltonian reduction as an algebra, with this multiplication.   This algebra structure can be understood more directly, as follows:

\begin{lemma}\label{ReduceToInvariants}
We have a natural isomorphism of algebras:
$$\mathcal{D}_q(GL_2)\dS_{\!\!\!\mathcal{I}_t} U\cong \mathcal{D}_q(GL_2)^U\Big/ \mathcal{D}_q(GL_2)^U \cdot \mu_q(Z_t).$$
\end{lemma}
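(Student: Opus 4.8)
The plan is to deduce the lemma from exactness of the functor of $U$-invariants on the category in which $\cD_q(GL_2)$ lives, together with a computation of the invariants of the left ideal $\cD_q(GL_2)\cdot\mu_q(Z_t)$. First I would record that $\cD_q(GL_2)$ is a \emph{locally finite} $U$-module algebra: by Lemma~\ref{action-EFK-aij} (transported through the two embeddings $L\mapsto A$, $L\mapsto D$) the spans of the $a^i_j$ and of the $\partial^i_j$ are finite-dimensional $U$-submodules on which $K_1,K_2$ act semisimply with eigenvalues in $q^{\ZZ}$; since $U$ acts by algebra automorphisms, every element of $\cD^+_q(GL_2)$ lies in a finite-dimensional $U$-submodule, and inverting the $U$-invariant Ore set generated by $\det_q(A)\det_q(D)$ preserves this. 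Hence $\cD_q(GL_2)$ is an ind-object of $\Rep_q GL_2$, which is semisimple because $q$ is not a non-trivial root of unity; in particular $(-)^U=\Hom_U(\mathbf 1,-)$ is exact on submodules and subquotients of $\cD_q(GL_2)$.

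Next, $\mu_q(Z_t)$ is $U$-invariant (as noted before the lemma, using Lemma~\ref{trdetarecenter}), so the left ideal $J:=\cD_q(GL_2)\cdot\mu_q(Z_t)$ is a $U$-submodule and the short exact sequence of $U$-modules $0\to J\to\cD_q(GL_2)\to M\to 0$ stays exact after applying $(-)^U$, giving $M^U\cong\cD_q(GL_2)^U/J^U$. To identify $J^U$, observe that right multiplication $r\colon d\mapsto d\,\mu_q(Z_t)$ is a $U$-module map (invariance of $\mu_q(Z_t)$ collapses the coproduct via the counit: $u\rhd(d\,\mu_q(Z_t))=(u\rhd d)\mu_q(Z_t)$), with image exactly $J$; applying the exact functor $(-)^U$ to $0\to\ker r\to\cD_q(GL_2)\xrightarrow{r}J\to 0$ shows $r$ carries $\cD_q(GL_2)^U$ onto $J^U$, i.e. $J^U=\cD_q(GL_2)^U\cdot\mu_q(Z_t)$. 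Thus $\cD_q(GL_2)\underset{\mathcal I_t}{\dS}U=M^U\cong\cD_q(GL_2)^U\big/\cD_q(GL_2)^U\cdot\mu_q(Z_t)$ as vector spaces.

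Finally I would promote this to an isomorphism of algebras. The quantum moment map identity $\mu_q(x).y=(x_{(1)}\cdot y)\,\mu_q(x_{(2)})$ shows, exactly as in the paragraph preceding the lemma, that $\mu_q(Z_t)$ commutes with every $U$-invariant element of $\cD_q(GL_2)$; hence $\cD_q(GL_2)^U\cdot\mu_q(Z_t)$ is a two-sided ideal of $\cD_q(GL_2)^U$ and the right-hand side carries an algebra structure. Tracing the identification $M^U\cong\operatorname{End}(M)$ from the discussion before the lemma through the surjection $d\mapsto d\cdot\bar 1\in M$ then shows the displayed vector-space isomorphism respects products (this is the point at which one must be mildly careful with left/right conventions, but the moment map identity is exactly what makes it work).

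The main obstacle is the very first step: justifying that the $U$-action on $\cD_q(GL_2)$ is genuinely integrable and locally finite, so that we sit inside the semisimple category where $(-)^U$ is exact. This is the only place the hypothesis that $q$ is not a non-trivial root of unity is used, and it is precisely what fails at roots of unity (cf.\ the Remark on roots of unity, which observes that exactness of invariants, and hence this lemma, breaks down there). Everything after that step is formal homological bookkeeping.
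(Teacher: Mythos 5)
Your proof is correct and follows essentially the same route as the paper: exactness of the invariants functor (valid since $q$ is not a nontrivial root of unity, so the locally finite $U$-modules involved are semisimple) applied to the short exact sequence defining $M$, together with the $U$-invariance of $\mu_q(Z_t)$ to identify $J^U$ with $\cD_q(GL_2)^U\cdot\mu_q(Z_t)$. The only difference is that you spell out the local-finiteness check, the second application of exactness for $J^U$, and the compatibility of the vector-space isomorphism with the algebra structure — all details the paper elides but which match its intent.
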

\begin{proof}
Locally finite representations of $U$ are semisimple, so taking invariants is exact and we have
$$\mathcal{D}_q(GL_2)\dS_{\!\!\!\mathcal{I}_t} U=\left(\cD_q(GL_2)\Big/\cD_q(GL_2)\cdot \mu_q(Z_t)\right)^U \cong \cD_q(GL_2)^U\Big/\left(\cD_q(GL_2)\cdot \mu_q(Z_t)\right)^U.$$

As the generator $\mu_q(Z_t)$ of the ideal is $U$-invariant, we also have 
$$\left(\cD_q(GL_2)\cdot \mu_q(Z_t)\right)^U=\cD_q(GL_2)^U \cdot \mu_q(Z_t).$$
\end{proof}

The usefulness of this lemma is that it reduces the problem of finding the presentation of $\mathcal{D}_q(GL_2)\dS_{\!\!\!\mathcal{I}_t} U$ to the problem of finding the presentation of $\mathcal{D}_q(GL_2)^U$. This is employed in Section \ref{PBW-DqG}.

The ideal $\cD_q(GL_2)\cdot\mu_q(Z_t)$ is only a one-sided ideal in $\cD_q(GL_2)$.  However, its sub-space of invariants, $\left(\cD_q(GL_2)\cdot\mu_q(Z_t)\right)^U$, is a two-sided ideal in $\cD_q(GL_2)^U$. This equips $\mathcal{D}_q(GL_2)\dS_{\!\!\!\mathcal{I}_t} U$ with an algebra structure, which coincides with the above description of it as the algebra of endomorphisms of the module $M$. The following theorem gives a presentation of this algebra by generators and relations, and the corresponding PBW basis for it. 



\begin{theorem} \label{HamiltonPresentation}
Assume that $q,t\neq 0$, and that $q$ is not a nontrivial root of unity. 

\begin{enumerate}
\item The quantum Hamiltonian reduction $\cD_q(GL_2)\dS_{\!\!\! \mathcal{I}_t} U$ is isomorphic to the algebra with generators $c_1,c_2^{\pm1},d_1,d_2^{\pm1},r$ and relations:\\
\begin{minipage}[t]{0.4\textwidth}
\begin{align}
c_2c_1&=c_1c_2 \\
d_2d_1&=d_1d_2  \\
d_2c_2&=q^{-4}c_2d_2 \\
d_2c_1&=q^{-2}c_1d_2\\
d_1c_2&=q^{-2}c_2d_1 
\end{align}
\end{minipage}
\begin{minipage}[t]{0.6\textwidth}
 \begin{align}
d_2r&= q^{-2}rd_2 \\
rc_2&= q^{-2}c_2r \\
d_1r&=q^{-2}rd_1+(1-q^{-2})q^{-2}c_1d_2  \\
rc_1&= q^{-2}c_1r+(1-q^{-2})q^{-2}c_2d_1\\
d_1c_1&=c_1d_1+(q^{-2}-1)r 
\end{align}
\end{minipage}

\begin{equation} r^2=q^{-4}(1+t^2)(q^{-2}+t^{-2})c_2d_2-
q^{-4}c_2d_1^2- q^{-4}c_1^2d_2+q^{-2}c_1rd_1.\end{equation}

The isomorphism is given by 
\begin{align}
\begin{aligned}
\Psi(c_1)= \tr_q(A), \quad &  \Psi(c_2)=  {\det}_q(A), \quad & \Psi(r) = q^2  \tr_q(DA), \\
\Psi(d_1)= \tr_q(D), \quad &  \Psi(d_2)= {\det}_q(D). 
\end{aligned}
\label{Dq-pres-iso}
\end{align}

\item After the identification given by $\Psi$, the set  $$\{c_1^{a_1}c_2^{a_2}r^\epsilon d_1^{b_1}d_2^{b_2} \ | \  \epsilon \in \{0,1\}, a_1,b_1 \in \mathbb{N}_0, a_2,b_2 \in \mathbb{Z} \}$$
forms a basis of the algebra $\cD_q(GL_2)\dS_{\!\!\! \mathcal{I}_t} U$.
\end{enumerate}

\end{theorem}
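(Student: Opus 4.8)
The plan is to use Lemma~\ref{ReduceToInvariants} to replace $\cD_q(GL_2)\dS_{\mathcal{I}_t}U$ by the more concrete algebra $\cD_q(GL_2)^U / \cD_q(GL_2)^U\cdot\mu_q(Z_t)$, and then to argue in three stages: that $\Psi$ is a well-defined homomorphism, that it is surjective, and that the proposed PBW set maps to a linearly independent set (so that $\Psi$ is also injective). For well-definedness, invariance of $\tr_q(A),\det_q(A),\tr_q(D),\det_q(D)$ is Lemma~\ref{trdetarecenter} applied to the two copies of $\cO_q(GL_2)$ inside $\cD_q(GL_2)$, and invariance of $q^2\tr_q(DA)$ is a short check with the action formulas of Lemma~\ref{action-EFK-aij} (the $q$-trace of a product of two matrices each transforming in the adjoint way is again invariant). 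The eleven defining relations are then verified by direct computation from the commutation relations \eqref{relns-aa}--\eqref{relns-da}. All but the last are identities internal to $\cD_q(GL_2)$; the decisive one,
$$r^2=q^{-4}(1+t^2)(q^{-2}+t^{-2})c_2d_2-q^{-4}c_2d_1^2-q^{-4}c_1^2d_2+q^{-2}c_1rd_1,$$
I would obtain by expanding $q^4\tr_q(DA)^2$ as a quantum trace identity -- a $q$-deformation of the classical $2\times 2$ relation among $\tr(DA)^2$, $\det(A)\tr(D)^2$, $\det(D)\tr(A)^2$, $\tr(A)\tr(D)\tr(DA)$, and $\tr(DA^{-1}D^{-1}A)$ -- whose only term not already a word in the $c_i,d_i,r$ is $\tr_q(\mu_q(L))$; substituting the scalar $q^4(t^{-2}+q^{-2}t^2)$ afforded by $\mu_q(Z_t)\equiv 0$ produces the coefficient $(1+t^2)(q^{-2}+t^{-2})$.

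For surjectivity I would show that, after inverting $\det_q(A)$ and $\det_q(D)$, the algebra $\cD_q(GL_2)^U$ is generated by $q$-traces of short words in $A$ and $D$. Since the PBW basis of Proposition~\ref{Dq-PBW-prop} exhibits $\cD^+_q(GL_2)$ as a flat $\ZZ^2$-graded deformation of the coordinate ring of $\mathrm{Mat}_{2\times 2}\times\mathrm{Mat}_{2\times 2}$ with its simultaneous conjugation action, a graded Nakayama argument reduces this to the first fundamental theorem of invariant theory for two $2\times 2$ matrices under conjugation, together with the Cayley--Hamilton reductions $\tr(A^2)=\tr(A)^2-2\det(A)$ and $\tr(D^2)=\tr(D)^2-2\det(D)$. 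This leaves the generators $\tr_q(A),\det_q(A)^{\pm1},\tr_q(D),\det_q(D)^{\pm1},\tr_q(DA),\tr_q(AD)$, and I would eliminate $\tr_q(AD)$ using the same quantum trace identity modulo $\mu_q(Z_t)$, leaving exactly $c_1,c_2^{\pm1},d_1,d_2^{\pm1},r$.

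For the PBW statement, the relations already give a terminating rewriting procedure -- push every $d_i$ to the right, every $c_i$ to the left, and reduce each occurrence of $r^2$ by the relation above -- so $\{c_1^{a_1}c_2^{a_2}r^\epsilon d_1^{b_1}d_2^{b_2}\}$ spans, and only linear independence remains. Here I would use the $\ZZ^2$-grading: because $\det_q(A)$ and $\det_q(D)$ are $q$-central, the algebra $\cD_q(GL_2)\dS_{\mathcal{I}_t}U$ is a flat Ore localization at these elements of the auxiliary subalgebra $A^+$ generated by $c_1,c_2,d_1,d_2,r$ without inverses, so it suffices to prove that in $A^+$ the monomials with $a_i,b_i\in\mathbb{N}_0$ form a basis of each bidegree $(M,N)$. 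Now $A^+$ is the image of $\cD^+_q(GL_2)^U$ in the quotient by $\mu_q(Z_t)$, and by the PBW basis of $\cD^+_q(GL_2)$ its bidegree-$(M,N)$ piece is finite-dimensional; I would compute this dimension bidegree by bidegree -- equivalently, determine how right multiplication by $\mu_q(Z_t)$ cuts down $\cD^+_q(GL_2)^U$ -- and match it against the number of monomials of bidegree $(M,N)$. By flatness this dimension is independent of $q$ (not a nontrivial root of unity) and of $t$, hence equals the dimension of the corresponding graded piece of the classical Hamiltonian reduction, i.e. of the coordinate ring of the multiplicative Calogero--Moser variety $C_t$; its Hilbert series, computed from classical invariant theory, agrees with the monomial count. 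Localizing back at $\det_q(A),\det_q(D)$ turns $a_2,b_2\in\mathbb{N}_0$ into $a_2,b_2\in\ZZ$, and together with surjectivity this yields both the isomorphism and the asserted PBW basis.

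I expect the main obstacle to be precisely this last dimension count: establishing flatness of $\cD^+_q(GL_2)^U$ and of its quotient by $\mu_q(Z_t)$ over the parameters with the correct classical limit, and carrying out the classical Hilbert-series computation for $C_t$, so that the two counts agree in every bidegree. The quantum trace identity needed for the $r^2$-relation and for eliminating $\tr_q(AD)$ is the main computational hurdle in the earlier steps, but is ultimately a finite calculation.
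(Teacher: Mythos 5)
Your high-level architecture matches the paper's: reduce via Lemma~\ref{ReduceToInvariants}, verify the eleven relations by direct computation, and establish the PBW basis by rewriting plus a Hilbert-series/linear-independence argument, localizing at $\det_q(A),\det_q(D)$ at the end. Your treatment of the $r^2$ relation (as a quantum trace identity in which $\tr_q(\mu_q(L))$ gets specialized to $q^4(t^{-2}+q^{-2}t^2)$) is exactly what the paper's relation \eqref{r2reln} encodes once one sets $w=(t^{-2}+q^{-2}t^2)c_2d_2$. Two points, however, are genuine gaps rather than computations you have merely deferred.

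First, the ``graded Nakayama argument'' for surjectivity does not apply as you state it. Nakayama-type reasoning transports generators across a flat deformation over a \emph{local} base; here the theorem is asserted for a fixed numerical $q\in\CC^\times$, not merely in the formal neighborhood of $q=1$, and there is no local ring in sight. What replaces it in the paper is the Hilbert-series equality of Lemma~\ref{lemma-Hilb-DqU}: one decomposes $\cO_q^+(GL_2)\ot\cO_q^+(GL_2)$ into irreducible $U$-modules and observes that the multiplicities of invariants are the same for generic $q$ as for $q=1$ (this is exactly where the hypothesis that $q$ is not a nontrivial root of unity is used). Surjectivity then follows for free once injectivity (Lemma~\ref{Dq-lin-ind}, a leading-term argument against the PBW basis of Proposition~\ref{Dq-PBW-prop}) and equality of Hilbert series are in hand; the FFT for two $2\times 2$ matrices is implicitly encoded in the $q=1$ character computation, not invoked separately.

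Second, and more seriously, your plan to compute the Hilbert series of $\cD_q^+(GL_2)^U\big/\cD_q^+(GL_2)^U\cdot\mu_q(Z_t)$ requires knowing that right multiplication by $\mu_q(Z_t)$ is injective on the invariants (flatness of the quotient), and then matching against the coordinate ring of the multiplicative Calogero--Moser variety $C_t$. Neither of these is straightforward, and the paper deliberately sidesteps both. The key structural move you are missing is Proposition~\ref{PresentationInvariants}: before quotienting, the paper finds a presentation of $\cD_q(GL_2)^U$ itself, with an \emph{extra} generator $w=\overline{\Psi}^{-1}(\tr_q(D\tilde A\tilde D A))$ that is $q$-central in the given grading. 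The Diamond Lemma then gives a PBW basis of $\mathcal{B}^+$ in which $w$ appears as a free polynomial variable, so $w-(t^{-2}+q^{-2}t^2)c_2d_2$ is manifestly a non-zero-divisor and the quotient's PBW basis is obtained by simply eliminating $w$. With your route you would have to prove the non-zero-divisor property of $\mu_q(Z_t)$ directly (you correctly flag this as your main obstacle), and you would still need to justify the Hilbert series of $\CC[C_t]$, which is harder than the paper's representation-theoretic count of invariants in the \emph{unquotiented} algebra. Introducing $w$ is precisely what converts the quotient step from an analytic flatness problem into a purely algebraic triangular substitution.
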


Section \ref{PBW-DqG} is dedicated to the proof of this Theorem.

\section{The quantum Harish-Chandra isomorphism}\label{mainstatement}

\begin{theorem}\label{main-result}

Assume that $q,t\neq 0$, $t^2+1\ne 0$, and $q$ is not a nontrivial root of $1$. There exists a unique graded isomorphism of algebras,
$$HC_{t,q}:\cD_q(GL_2)\dS_{\!\!\! \mathcal{I}_t} U\xrightarrow{\cong} e\mathbb{H}_{t,q}e$$
such that:
\begin{align*}
HC_{t,q}(d_1)=P_1,  \quad \quad \qquad & HC_{t,q}(d_2)=q^2P_2, &  HC_{t,q}(r)=R, \\
HC_{t,q}(c_1)=Q_1,  \quad \quad \qquad &  HC_{t,q}(c_2)=q^2Q_2.\\
\end{align*}
\end{theorem}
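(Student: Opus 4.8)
The plan is to deduce Theorem \ref{main-result} directly from the two presentation theorems, Theorem \ref{thm-sDAHApresentation} and Theorem \ref{HamiltonPresentation}, which are the real content of the paper. Both theorems exhibit their respective algebras --- $\cD_q(GL_2)\dS_{\!\!\mathcal{I}_t}U$ and $e\HH e$ --- as abstract algebras presented by five generators (call them $c_1,c_2^{\pm1},d_1,d_2^{\pm1},r$ on one side and $P_1,P_2^{\pm1},Q_1,Q_2^{\pm1},R$ on the other) with eleven relations each, together with the \emph{same} PBW-type basis in each case. So the first and essentially only step is to compare the two lists of relations under the substitution $d_1\mapsto P_1$, $d_2\mapsto q^2P_2$, $r\mapsto R$, $c_1\mapsto Q_1$, $c_2\mapsto q^2Q_2$, and to check that each of the eleven relations in Theorem \ref{HamiltonPresentation}(1) maps to a consequence of (indeed, to a scalar rearrangement of) the eleven relations \eqref{P1P2}--\eqref{R2reln}.

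Concretely, I would go through them in parallel. The purely $X$- or $Y$-type relations $c_2c_1=c_1c_2$, $d_2d_1=d_1d_2$ map to \eqref{P1P2}, \eqref{Q1Q2} with no scalar to track. The mixed monomial relations $d_2c_2=q^{-4}c_2d_2$, $d_2c_1=q^{-2}c_1d_2$, $d_1c_2=q^{-2}c_2d_1$ map to \eqref{P2Q2}, \eqref{P2Q1}, \eqref{P1Q2}: here the $q^2$-scalings on $c_2,d_2$ cancel on both sides, so the powers of $q$ are forced to match, which they do. The relations involving $r$ or $R$ --- namely $d_2r=q^{-2}rd_2$, $rc_2=q^{-2}c_2r$, $d_1c_1=c_1d_1+(q^{-2}-1)r$ --- go to \eqref{P2R}, \eqref{RQ2}, \eqref{P1Q1} directly; one should note the scalar match in \eqref{P1Q1}. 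The two genuinely asymmetric relations $d_1r=q^{-2}rd_1+(1-q^{-2})q^{-2}c_1d_2$ and $rc_1=q^{-2}c_1r+(1-q^{-2})q^{-2}c_2d_1$ should be checked against \eqref{P1R} and \eqref{RQ1}: substituting $d_2=q^2P_2$, $c_2=q^2Q_2$ into the inhomogeneous terms, $(1-q^{-2})q^{-2}c_1d_2\mapsto (1-q^{-2})q^{-2}Q_1(q^2P_2)=(1-q^{-2})Q_1P_2$, which is exactly the inhomogeneous term of \eqref{P1R}, and similarly for \eqref{RQ1}. Finally the quadratic relation for $r^2$ must be matched with \eqref{R2reln}: each monomial $c_2d_2$, $c_2d_1^2$, $c_1^2d_2$, $c_1rd_1$ carries a power of $q^2$ coming from the rescaling of $c_2,d_2$, and one checks that the prefactors $q^{-4}(1+t^2)(q^{-2}+t^{-2})$, $q^{-4}$, $q^{-4}$, $q^{-2}$ in Theorem \ref{HamiltonPresentation}(1) become exactly $(1+t^2)(q^{-2}+t^{-2})$, $q^{-2}$, $q^{-2}$, $q^{-2}$ of \eqref{R2reln} after absorbing those powers. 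Since the substitution visibly sends generators to generators and relations to relations, it induces an algebra homomorphism $HC_{t,q}$; and since the PBW bases correspond termwise (monomials $c_1^{a_1}c_2^{a_2}r^\epsilon d_1^{b_1}d_2^{b_2}$ to $Q_1^{a_1}(q^2Q_2)^{a_2}R^\epsilon P_1^{b_1}(q^2P_2)^{b_2}$, i.e. to a nonzero scalar times $Q_1^{a_1}Q_2^{a_2}R^\epsilon P_1^{b_1}P_2^{b_2}$), the map carries a basis to a basis and is therefore an isomorphism. Uniqueness and gradedness are immediate: the listed generators generate, so a homomorphism is determined by their images, and the images have the degrees $(0,1)$, $(1,0)$ etc.\ matching the $\ZZ^2$-gradings on both algebras (recalling $\deg c_i = (1,0)$, $\deg d_i=(0,1)$ from \eqref{grading-Dq} descend to the reduction, and $\deg Q_i=(1,0)$, $\deg P_i=(0,1)$, $\deg R=(1,1)$ on the spherical DAHA side).

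The main obstacle is not conceptual but bookkeeping: one must be scrupulous about the powers of $q$ introduced by the rescalings $d_2\mapsto q^2P_2$ and $c_2\mapsto q^2Q_2$, especially in \eqref{RQ1}, \eqref{P1R}, and above all in the quadratic relation \eqref{R2reln}, where a miscount would break the match. (Indeed the rescaling is precisely engineered so that these work out.) There is no deformation-theoretic or flatness argument needed here --- the hard analysis has already been done in proving the two presentation theorems --- so the proof of Theorem \ref{main-result} is genuinely just the observation that the two presentations coincide under an explicit change of variables.
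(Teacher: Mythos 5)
Your proposal is correct and takes the same route as the paper: the paper's proof of Theorem~\ref{main-result} is a one-line appeal to comparing the presentations of Theorems~\ref{thm-sDAHApresentation} and~\ref{HamiltonPresentation}, and your write-up simply makes explicit the scalar bookkeeping (the $q^2$-rescalings of $c_2,d_2$) that the paper leaves to the reader. The verification you sketch of the eleven relation matches, and the observation that the PBW bases correspond term by term up to nonzero scalars, is exactly what is meant by ``follows directly by comparing the presentations.''
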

\begin{proof}
This follows directly by comparing the presentation of $ e\mathbb{H}_{t,q}e$ given in Theorem \ref{thm-sDAHApresentation} and the presentation of $\cD_q(GL_2)\dS_{\!\!\! \mathcal{I}_t} U$ given in Theorem \ref{HamiltonPresentation}. 
\end{proof}

\begin{remark}  We note that the isomorphism $HC_{t,q}$ extends, even for $q$ a root of unity, to give an isomorphism:
$$HC_{t,q}: \mathcal{B}\Big/\mathcal{B}\cdot(w-(t^{-2} + q^{-2}t^2)c_2d_2) \xrightarrow{\cong} e\HH e,$$
where $\mathcal{B}$ is an explicitly given algebra with generators and relations (see Prop \ref{PresentationInvariants}), which recovers the Hamiltonian reduction when $q$ is not a root of unity.  In the case the $q$ is a non-trivial root of unity, one should take more care with defining the Hamiltonian reduction, so that the isomorphism will still hold.  It should be interesting to compare this directly to \cite{VV}.
\end{remark}

\section{Proof of Theorem \ref{thm-sDAHApresentation}} \label{sec-DAHAPResentation}

This section contains the proof of Theorem \ref{thm-sDAHApresentation}. The aim is to give a presentation of the spherical DAHA by generators and relations (Theorem \ref{thm-sDAHApresentation} (1)), and at the same time to find a PBW type basis for it (Theorem \ref{thm-sDAHApresentation} (2)). 

Throughout this section, let $\mathcal{A}$ be the algebra 
with generators $P_1$, $P_2^{\pm1}$, $Q_1$, $Q_2^{\pm1}$, $R$ and relations \eqref{P1P2}-\eqref{R2reln}. 
It is $\mathbb{Z}^2$-graded by 
$$\deg(P_1)=(0,1), \,\,\, \deg(P_2)=(0,2), \,\,\, \deg(Q_1)=(1,0),\,\,\, \deg(Q_2)=(2,0), \,\,\, \deg(R)=(1,1).$$ We first describe a PBW basis of this algebra. 

\begin{lemma}\label{PBW-A}
The set $$\{Q_1^{a_1}Q_2^{a_2}R^\epsilon P_1^{b_1}P_2^{b_2} \ | \  \epsilon \in \{0,1\}, a_1,b_1 \in \mathbb{N}_0, a_2,b_2 \in \mathbb{Z} \}$$
forms a basis of the algebra $\mathcal{A}$.
\end{lemma}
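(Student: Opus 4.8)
The plan is to establish the PBW property for $\mathcal{A}$ by the standard diamond-lemma / Bergman strategy: show that the given relations, viewed as a rewriting system, let one reduce every word in the generators to a linear combination of monomials of the form $Q_1^{a_1}Q_2^{a_2}R^\epsilon P_1^{b_1}P_2^{b_2}$, and that the spanning set so obtained is linearly independent. First I would fix the ordering of generators $Q_1 \prec Q_2 \prec R \prec P_1 \prec P_2$ (with $Q_2^{\pm1}, P_2^{\pm1}$ already invertible) and read the relations \eqref{P1P2}--\eqref{R2reln} as rewriting rules that move every $P$ to the right of every $Q$ and every $R$, move each $R$ to the right of the $Q$'s, and reduce any occurrence of $R^2$ via \eqref{R2reln}. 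The relations \eqref{P2R}, \eqref{RQ2}, \eqref{P2Q1}, \eqref{P1Q2}, \eqref{P2Q2} are pure $q$-commutation rules (no lower-order terms), while \eqref{P1Q1}, \eqref{P1R}, \eqref{RQ1} produce one extra monomial each, strictly smaller in the natural partial order (fewer ``inversions'', or lower in the $R$-degree after using \eqref{R2reln}). So every rule is order-reducing for a suitable well-founded ordering — e.g. order monomials first by total degree in $P_1$ and $Q_1$ combined with $R$-degree counted twice, then by number of out-of-order adjacent pairs — which guarantees termination of the rewriting.

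The substance of the argument is confluence: I would enumerate the overlap ambiguities among the leading terms and check each one resolves. The critical overlaps are the triples $P_2 P_1 Q_1$, $P_1 Q_1 Q_2$, $P_1 R Q_1$ (this is the delicate one, since reducing $P_1R$ first versus $RQ_1$ first both eventually create and then must eliminate an $R^2$ via \eqref{R2reln}), $R Q_1 Q_2$, $P_2 R Q_1$, $P_2 P_1 R$, $P_1 R Q_2$, and the overlaps involving $R \cdot R$ against $R Q_1$ and $P_1 R$, i.e. $P_1 R^2$ and $R^2 Q_1$, where one can either apply \eqref{R2reln} directly or first push the outer factor past one copy of $R$. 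I expect the $R^2$-overlaps and $P_1RQ_1$ to be the main obstacle: resolving $P_1 R^2$ requires that commuting $P_1$ past $R$ twice (via \eqref{P1R}) and then applying \eqref{R2reln}, versus applying \eqref{R2reln} first and then moving $P_1$ rightward through the resulting quadratic expression in $P_1,P_2,Q_1,Q_2,R$, give the same normal form — this is a genuine multi-line computation, but it is forced, because $\mathcal{A}$ is by construction a flat deformation (all relations are homogeneous for the $\ZZ^2$-grading with the stated degrees, and they specialize at $q=1$ to a presentation one can check directly). In fact a cleaner route, which I would use to organize the bookkeeping, is to verify confluence degree-by-degree in the $\ZZ^2$-grading: in each bidegree $(M,N)$ the candidate monomials $Q_1^{a_1}Q_2^{a_2}R^\epsilon P_1^{b_1}P_2^{b_2}$ with $a_1+2a_2+\epsilon=M$, $b_1+2b_2+\epsilon=N$ are finite in number once one fixes, say, the $Q_2$- and $P_2$-exponents, so the relations reduce, within each graded piece, to finite-dimensional linear algebra.

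Finally, for linear independence of the spanning set I would produce a representation of $\mathcal{A}$ on which these monomials act by linearly independent operators — the natural candidate is the Laurent polynomial module $\CC[x_1^{\pm1},x_2^{\pm1}]$ (or its $q$-difference analogue) on which $P_1,P_2$ act by multiplication by elementary symmetric functions, $Q_1,Q_2$ by $q$-shift operators, and $R$ by the explicit operator dictated by $\Phi(R)=e(t^{-2}Y_1X_1+Y_2X_2)e$; alternatively, and more in the spirit of the paper, one invokes that $\Phi$ of \eqref{sDAHA-pres-iso} is a well-defined algebra map (checked by substituting \eqref{sDAHA-pres-iso} into \eqref{P1P2}--\eqref{R2reln} and using Proposition \ref{prop-AllRelnsDAHA}) landing in $e\HH e$, and that it sends the proposed monomial basis of $\mathcal{A}$ to the images under $e(-)e$ of distinct PBW monomials of $\HH$ from Proposition \ref{prop-AllRelnsDAHA}(2), which are linearly independent since $e\HH e \subset \HH$; combined with the surjectivity of $\Phi$ and the spanning statement just proved, this forces both that $\{Q_1^{a_1}Q_2^{a_2}R^\epsilon P_1^{b_1}P_2^{b_2}\}$ is a basis of $\mathcal{A}$ and that $\Phi$ is an isomorphism — so this lemma and Theorem \ref{thm-sDAHApresentation}(1) get proved together.
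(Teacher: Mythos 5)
Your high-level strategy (Bergman's Diamond Lemma, rewriting rules, checking overlap ambiguities) is the same as the paper's, and you correctly identify $P_1RQ_1$, $P_1R^2$, $R^2Q_1$, and $R^3$ as the delicate overlaps. However, there are two genuine problems. First, you try to shortcut the confluence verification by saying the computation ``is forced, because $\mathcal{A}$ is by construction a flat deformation.'' This is circular: $\mathcal{A}$ is defined purely by generators and relations, and flatness is exactly the non-trivial conclusion of the Diamond Lemma, not something you can invoke on the way in. A priori, the relations \eqref{P1P2}--\eqref{R2reln} might force unexpected collapse in $\mathcal{A}$; the purpose of verifying the overlap diamonds is precisely to rule that out. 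The paper actually carries out the computation (illustrated on $P_1RQ_1$) rather than appealing to flatness.

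Second, your ``Finally, for linear independence\ldots'' paragraph indicates a misunderstanding of what the Diamond Lemma delivers. Once all overlap ambiguities resolve, Bergman's theorem already asserts that the irreducible monomials form a basis --- spanning \emph{and} linear independence both come for free. No auxiliary faithful module and no appeal to $\Phi$ are needed, and indeed neither of your proposed alternatives is sound at this point in the argument: the operators $\Phi(Q_1^{a_1}Q_2^{a_2}R^\epsilon P_1^{b_1}P_2^{b_2})$ are $e$ times \emph{linear combinations} of PBW monomials of $\HH$, not images of distinct PBW monomials (establishing their independence is the entire content of the paper's Lemma \ref{Phi-inj}), and you cannot invoke surjectivity of $\Phi$ here because that is proved only later via the Hilbert series comparison. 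One smaller point you elide: the Diamond Lemma applies to the \emph{non-localized} algebra $\mathcal{A}^+$ generated by $P_1,P_2,Q_1,Q_2,R$ (where monomials are honest words), and the statement for $\mathcal{A}$ is then obtained by Ore localization at $P_2Q_2$; the paper handles this two-step structure explicitly, and it should appear in your write-up too.
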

\begin{proof}
We will use the Diamond Lemma from \cite{B}. In the language of that paper, the defining relations \eqref{P1P2}-\eqref{R2reln} of $\mathcal{A}$ are \emph{straightening} relations, prescribing how to replace (straighten) certain monomials by linear combinations of other, simpler (with respect to some ordering), monomials.

For a monomials $ABC$ such that $AB$ can be straightened and $BC$ can be straightened, we say that \emph{the straightening diamond} holds if the two resulting straightenings of $ABC$ can be further straightened to a common value:  in other words, the results of straightening $ABC$ should be independent on the choice to first straighten $AB$ or to first straighten $BC$. As proved in \cite{B}, the necessary and sufficient condition for such a straightening algorithm to be well defined, and for the resulting set of straightened monomials to be a basis, is that the straightening diamonds hold for all monomials $ABC$.

To apply this theorem to the algebra $\mathcal{A}$, we must establish the straightening diamonds for all of the following monomials:
$$P_2P_1R, \quad P_2P_1Q_2,\quad P_2P_1Q_1,\quad P_2RQ_2,\quad P_2RQ_1,$$ 
$$P_2Q_2Q_1,\quad  P_1RQ_2, \quad R^2Q_2,\quad P_1Q_2Q_1,\quad RQ_2Q_1,$$ 
$$P_2R^2, \quad   P_1RQ_1, \quad P_1R^2, \quad R^2Q_1, \quad R^3.$$

This is done by direct computation. Because $P_2$ and $Q_2$ each $q$-commute with all other generators, the only non-trivial checks involve the final four monomials. For illustration, we prove that the straightening diamond for $P_1RQ_1$ holds. Straightening $P_1R$ first, we get
\begin{align*}
(P_1R)Q_1&\stackrel{\eqref{P1R}}{=} q^{-2}RP_1Q_1+(1-q^{-2})Q_1P_2Q_1\\
&\hspace{-0.32cm}\stackrel{\eqref{P1Q1},\eqref{P2Q1}}{=} q^{-2}RQ_1P_1+q^{-2}(q^{-2}-1)R^2+(1-q^{-2})q^{-2}Q_1^2P_2\\
&\stackrel{\eqref{RQ1}}{=} q^{-4}Q_1RP_1+q^{-2}(1-q^{-2})Q_2P_1^2+q^{-2}(q^{-2}-1)R^2+q^{-2}(1-q^{-2})Q_1^2P_2.
\end{align*}
Straightening $RQ_1$ first instead, we get
\begin{align*}
P_1(RQ_1)&\stackrel{\eqref{RQ1}}{=} q^{-2}P_1Q_1R+(1-q^{-2})P_1Q_2P_1\\
&\hspace{-0.32cm}\stackrel{\eqref{P1Q1},\eqref{P1Q2}}{=} q^{-2}Q_1P_1R+q^{-2}(q^{-2}-1)R^2+q^{-2}(1-q^{-2})Q_2P_1^2\\
&\stackrel{\eqref{P1R}}{=}q^{-4}Q_1RP_1+q^{-2}(1-q^{-2})Q_1^2P_2+q^{-2}(q^{-2}-1)R^2+q^{-2}(1-q^{-2})Q_2P_1^2.
\end{align*}
By inspection, these two expressions agree. 

This proves the corresponding claim for the subalgebra $\mathcal{A}^+$ generated by $P_1$, $P_2$, $Q_1$, $Q_2$, $R$. The claim for $\mathcal{A}$ follows by localization at $P_2Q_2$.

\end{proof}

Next, we show that the defining relations of $\mathcal{A}$ are satisfied in the spherical DAHA.

\begin{lemma}\label{Phi-is-hom}
The map $\Phi:\mathcal{A}\to e\mathbb{H}_{t,q}e$ given by \eqref{sDAHA-pres-iso} is a graded algebra homomorphism.
\end{lemma}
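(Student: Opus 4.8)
The plan is to verify directly that each of the eleven defining relations \eqref{P1P2}--\eqref{R2reln} of $\mathcal{A}$ holds after applying $\Phi$, i.e.\ that the images $\Phi(P_1),\Phi(P_2^{\pm1}),\Phi(Q_1),\Phi(Q_2^{\pm1}),\Phi(R)$ in $e\mathbb{H}_{t,q}e$ satisfy them. The gradedness claim is then automatic: one checks that the bidegrees assigned to $P_1,P_2,Q_1,Q_2,R$ in $\mathcal{A}$ match the bidegrees of their images under the $\mathbb{Z}^2$-grading of $\mathbb{H}_{t,q}$ (with $\deg X_i=(0,1)$, $\deg Y_i=(1,0)$, $\deg T=(0,0)$), since $e$ is degree $(0,0)$; so $\Phi$ preserves the grading once it is a homomorphism.

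The key computational tool is the identity $exe = \tfrac{1}{2}(x + sxs)e$ for $x\in\mathbb{H}_{t,q}$, together with $eT=Te=te$ from \eqref{eT=Te=te} and the reordering relations \eqref{defDAHA-relations2} of Proposition \ref{prop-AllRelnsDAHA}. Concretely, I would first record the "half-symmetrized" form of each generator: $\Phi(P_1)=e(X_1+X_2)e$ and $\Phi(Q_1)=e(Y_1+Y_2)e$ are already $s$-invariant, while $X_1X_2$ and $Y_1Y_2$ are $s$-invariant, so $\Phi(P_2)=e X_1X_2 e = X_1X_2 e$ and similarly $\Phi(Q_2)=Y_1Y_2e$; the subtle one is $\Phi(R)=e(t^{-2}Y_1X_1+Y_2X_2)e$, whose $s$-conjugate involves $t^{-2}Y_2X_2+Y_1X_1$, so $\Phi(R)=\tfrac12 e\big((t^{-2}+1)(Y_1X_1+Y_2X_2)\big)e$ up to the precise bookkeeping of how $T^{\pm1}$ interacts with $e$. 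The relations \eqref{P1P2}--\eqref{P1Q1} involving only $P_1,P_2,Q_1,Q_2$ follow from the four reordering relations \eqref{defDAHA-relations2} and the commutativity relations in \eqref{defDAHA-relations1}, after sandwiching by $e$ and using that $eT^{-1}Y_1X_1 e$ can be rewritten via \eqref{eT=Te=te} as a scalar times $eY_1X_1 e$, which in turn relates to $\Phi(R)$; this is where the coefficient $(1-q^{-2})$ in \eqref{P1Q1} and the factors $q^{-2}$ in \eqref{P2Q1}, \eqref{P1Q2} get pinned down.

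The relations \eqref{P2R}--\eqref{RQ1} mixing $R$ with the other generators, and especially the quadratic relation \eqref{R2reln}, are the main obstacle. For \eqref{P2R} and \eqref{RQ2} one uses that $X_1X_2$ and $Y_1Y_2$ are $q$-central in the appropriate sense (cf.\ \eqref{X1X2-is-q-central}) together with the degree of $R$. For \eqref{P1R} and \eqref{RQ1} one must carefully commute $X_1+X_2$ past $t^{-2}Y_1X_1+Y_2X_2$ inside the algebra, collect the $T$-dependent cross-terms, and then apply $e(-)e$ and \eqref{eT=Te=te} to turn $T^{\pm1}$ into scalars; the inhomogeneous terms $(1-q^{-2})Q_1P_2$ and $(1-q^{-2})Q_2P_1$ should emerge from precisely these cross-terms. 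The hardest single check is \eqref{R2reln}: one computes $\big(e(t^{-2}Y_1X_1+Y_2X_2)e\big)^2$ by inserting $e=\tfrac{1+tT}{1+t^2}$ in the middle, expanding $(t^{-2}Y_1X_1+Y_2X_2)(1+tT)(t^{-2}Y_1X_1+Y_2X_2)$ using \eqref{defDAHA-relations1}--\eqref{defDAHA-relations2} and $TX_1T=X_2$, $T^{-1}Y_1T^{-1}=Y_2$, then sandwiches by $e$ again; the result must be reorganized into the stated combination of $e X_1X_2Y_1Y_2 e$, $e(X_1+X_2)^2 Y_1Y_2 e$-type and $e X_1X_2(Y_1+Y_2)^2 e$-type terms, and the mixed term $q^{-2}Q_1RP_1$. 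I expect the bulk of the work — and the only place where $t\neq\pm i$ (i.e.\ $1+t^2\neq 0$) is genuinely used, via invertibility of $1+t^2$ in the formula for $e$ — to be in matching the scalar $(1+t^2)(q^{-2}+t^{-2})$ in \eqref{R2reln}. Since Lemma \ref{PBW-A} already gives a PBW basis of $\mathcal{A}$, once all eleven relations are verified $\Phi$ is a well-defined algebra homomorphism, completing the proof.
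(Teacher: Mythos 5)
Your overall strategy coincides with the paper's: verify each of the eleven relations \eqref{P1P2}--\eqref{R2reln} directly in $\mathbb{H}_{t,q}$ using the relations from Definition~\ref{defDAHA} and Proposition~\ref{prop-AllRelnsDAHA}, and note that gradedness is automatic once the map is a homomorphism. However, the specific computational shortcut you lean on for $\Phi(R)$ is not valid. You invoke $exe=\tfrac12(x+sxs)e$ and then assert that the $s$-conjugate of $t^{-2}Y_1X_1+Y_2X_2$ ``involves $t^{-2}Y_2X_2+Y_1X_1$.'' In $\mathbb{H}_{t,q}$ the element $s=\tfrac{2T-(t-t^{-1})}{t+t^{-1}}$, and the reflection relations are $TX_1T=X_2$, $T^{-1}Y_1T^{-1}=Y_2$, \emph{not} $sX_1s=X_2$ or $sY_1s=Y_2$; conjugation by $s$ does not permute the $X_i$ or $Y_i$ (indeed $sX_1s$ is a sum of terms involving $T^{-1}$). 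There is also no algebra automorphism of $\mathbb{H}_{t,q}$ swapping $X_1\leftrightarrow X_2$, $Y_1\leftrightarrow Y_2$ and fixing $T$ at generic $t$, since $TX_1T=X_2$ would then force $T^2$ to commute with $X_1$, which is false. So the ``half-symmetrization'' step as stated would fail, and the hedge ``up to bookkeeping of how $T^{\pm1}$ interacts with $e$'' undersells the problem: the issue is not bookkeeping but that the identity you are using is not available.

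What the paper does instead (Equation \eqref{eP1e=eP1=P1e}) is to show directly that one may drop the right-hand $e$: for $P_1,P_2,Q_1,Q_2$ this follows because $X_1+X_2$, $X_1X_2$, $Y_1+Y_2$, $Y_1Y_2$ genuinely commute with $T$ (hence with $e$), while for $R$ one verifies separately that $e(t^{-2}Y_1X_1+Y_2X_2)T=t\,e(t^{-2}Y_1X_1+Y_2X_2)$, so that $\Phi(R)=e(t^{-2}Y_1X_1+Y_2X_2)$ with no right $e$. That reduction is what makes $\Phi(R)^2$ and the mixed relations tractable: one never needs $s$ at all, only $eT=te$ and the reordering relations \eqref{defDAHA-relations2}. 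Incidentally, your proposed closed form $\Phi(R)=\tfrac{t^{-2}+1}{2}\,e(Y_1X_1+Y_2X_2)e$ does happen to be correct -- it follows from the (non-obvious) identity $e(Y_1X_1-Y_2X_2)e=0$, which in turn is a consequence of $e(Y_1X_1-Y_2X_2)T=-t^{-1}e(Y_1X_1-Y_2X_2)$ -- but you would need to prove it by that route rather than by the index-swapping heuristic. With the corrected reduction in hand, the remainder of your plan (checking \eqref{P1P2}--\eqref{P1Q1} from \eqref{defDAHA-relations1}--\eqref{defDAHA-relations2}, then the mixed relations, then \eqref{R2reln} as the hardest case, with $t\neq\pm i$ entering only through invertibility of $1+t^2$ in $e$) is exactly what the paper carries out.
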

\begin{proof}
The images of $P_1,P_2,R,Q_1,Q_2$ under the proposed map $\Phi$ lie in the spherical subalgebra of $\mathbb{H}_{t,q}$, and the images of $P_2$ and $Q_2$ are invertible. It remains to show that $\Phi(P_1),\Phi(P_2),\Phi(Q_1),\Phi(Q_2)$ and $\Phi(R)$ satisfy the relations \eqref{P1P2}-\eqref{R2reln}. This is a straightforward, if lengthy computation, using relations from Definition \ref{defDAHA} and Proposition \ref{prop-AllRelnsDAHA}.

First, notice that in  $\Phi(P_1),\Phi(P_2),\Phi(Q_1),\Phi(Q_2), \Phi(R)$, it is enough to write $e$ on the left:
\begin{align}\label{eP1e=eP1=P1e}
\begin{aligned}
&\Phi(P_1)=e(X_1+X_2)e=(X_1+X_2)e=e(X_1+X_2)  \\
&\Phi(Q_1)=e(Y_1+Y_2)e=(Y_1+Y_2)e=e(Y_1+Y_2) \\
&\Phi(P_2)=e(X_1X_2)e=(X_1X_2)e=e(X_1X_2)\\
&\Phi(Q_2)=e(Y_1Y_2)e=(Y_1Y_2)e=e(Y_1Y_2)\\
&\Phi(R)=e(t^{-2}Y_1X_1+Y_2X_2)e= e(t^{-2}Y_1X_1+Y_2X_2). 
\end{aligned}
\end{align}
This simplifies some computations by allowing us to calculate in the subalgebra generated by $X_1,X_2,Y_1,Y_2$. 

For example, using the relation $X_1X_2=X_2X_1$ from Definition \ref{defDAHA}, we have that 
\begin{align*}
\Phi(P_2P_1)&=e(X_1X_2)e\cdot e(X_1+X_2)e\\
&=e(X_1X_2)(X_1+X_2)e\\
&=e(X_1+X_2)(X_1X_2)e\\
&=\Phi(P_1P_2), 
\end{align*}
proving \eqref{P1P2}. In a similar way, the relation \eqref{Q1Q2} follows from $Y_1Y_2=Y_2Y_1$, and relations \eqref{P2Q2}, \eqref{P2Q1}, \eqref{P1Q2}, \eqref{P2R} and \eqref{RQ2} follow from \eqref{X1X2-is-q-central}. For illustration we include the slightly more involved proof of relation \eqref{P1Q1}; relations \eqref{P1R}, \eqref{RQ1} and \eqref{R2reln} are proved analogously. 
\begin{align*}
\Phi(P_1Q_1)&\stackrel{\phantom{\eqref{eT=Te=te}}}{=} e(X_1+X_2)e\cdot e (Y_1+Y_2)e\\
&\stackrel{\eqref{eP1e=eP1=P1e}}{=}e(X_1Y_1+X_1Y_2+X_2Y_1+X_2Y_2)e\\
&\stackrel{\eqref{defDAHA-relations2}}{=} e\left( q^{-2}T^{-2}Y_1X_1+Y_2X_1 +(t-t^{-1}) T^{-1}Y_1X_1+\right.\\
&\qquad \left. +Y_1X_2 +(t-t^{-1})q^{-2}T^{-1} Y_1X_1+q^{-2}Y_2X_2T^{-2}\right)e\\
&\stackrel{\eqref{defDAHA-relations3}}{=} e\left( q^{-2}T^{-2}Y_1X_1+Y_2X_1 +(t-t^{-1})T^{-1}Y_1X_1+\right.\\
&\qquad \left. +Y_1X_2 +(t-t^{-1})q^{-2}T^{-1} Y_1X_1+q^{-2}Y_2X_2-(t-t^{-1})q^{-2}T^{-1}Y_1X_1\right)e\\
&\stackrel{\eqref{eT=Te=te}}{=} e\left((1-t^{-2}+q^{-2}t^{-2})Y_1X_1+Y_1X_2+Y_2X_1+q^{-2}Y_2X_2\right)e\\
&\stackrel{\phantom{\eqref{eT=Te=te}}}{=} e\left((Y_1+Y_2)(X_1+X_2)+(q^{-2}-1)(t^{-2}Y_1X_1+Y_2X_2)\right)e\\
&\stackrel{\phantom{\eqref{eT=Te=te}}}{=}\Phi(Q_1P_1+(q^{-2}-1)R).
\end{align*}
\end{proof}

This proves that $\Phi(P_1), \Phi(P_2),\Phi(Q_1),\Phi(Q_2), \Phi(R)$ really satisfy the relations \eqref{P1P2}-\eqref{R2reln} (i.e.~that $\Phi$ is a homomorphism). It remains to show that they generate the whole spherical double affine Hecke algebra (i.e.~that $\Phi$ is surjective), and that the stated relations are exhaustive (i.e.~that $\Phi$ is injective). We tackle injectivity first. 

\begin{lemma}\label{Phi-inj}
The set 
$$\{\Phi(Q_1^{a_1}Q_2^{a_2}R^\epsilon P_1^{b_1}P_2^{b_2}) \ | \  \epsilon \in \{0,1\}, a_1,b_1 \in \mathbb{N}_0, a_2,b_2 \in \mathbb{Z} \}$$
is linearly independent in $e\mathbb{H}_{t,q}e$. The homomorphism $\Phi$ is injective.
\end{lemma}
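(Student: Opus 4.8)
The plan is to push the statement into a commutative Laurent polynomial ring, where it becomes a short linear‑algebra argument. Let $\mathcal{P}\subset\mathbb{H}_{t,q}$ be the span of the PBW monomials $Y_1^{c_1}Y_2^{c_2}X_1^{d_1}X_2^{d_2}$ (the ones with $\epsilon=0$), and let $\pi\colon\mathbb{H}_{t,q}\to\mathcal{P}$ be the linear projection that annihilates every PBW monomial containing a factor $T$. Since $eT=te$ we have $e\mathbb{H}_{t,q}=e\mathcal{P}$ and $\pi(ef)=(1+t^2)^{-1}f$ for $f\in\mathcal{P}$, so $\pi$ is injective on $e\mathbb{H}_{t,q}$, hence on $e\mathbb{H}_{t,q}e$. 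Thus it is enough to show that $\big\{\pi\Phi(Q_1^{a_1}Q_2^{a_2}R^{\epsilon}P_1^{b_1}P_2^{b_2})\big\}$ is linearly independent in $\mathcal{P}$; and identifying $\mathcal{P}$ with $\mathbb{C}[Y_1^{\pm1},Y_2^{\pm1},X_1^{\pm1},X_2^{\pm1}]$ by sending each PBW monomial to the corresponding commutative monomial, this is a statement about genuine Laurent polynomials in an integral domain.

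First I would compute these images. Since $Y_1+Y_2$, $Y_1Y_2$, $X_1+X_2$, $X_1X_2$ commute with $T$, hence with $e$, the collapsing of products already carried out in the proof of Lemma \ref{Phi-is-hom} gives $\Phi(Q_1^{a_1}Q_2^{a_2}R^{\epsilon}P_1^{b_1}P_2^{b_2})=ew$ if $\epsilon=0$ and $=ewe$ if $\epsilon=1$, where $w$ is the no‑$T$ element $(Y_1+Y_2)^{a_1}(Y_1Y_2)^{a_2}(t^{-2}Y_1X_1+Y_2X_2)^{\epsilon}(X_1+X_2)^{b_1}(X_1X_2)^{b_2}$. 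For $\epsilon=0$ one reads off $\pi\Phi(\cdots)=(1+t^2)^{-1}w$. For $\epsilon=1$ I would use the relations $Y_1T=(t-t^{-1})Y_1+TY_2$ and $X_1T=TX_2-(t-t^{-1})X_2$ to establish, for any $f\in\mathbb{C}[Y_1,Y_2]^{S_2}$ and $g\in\mathbb{C}[X_1,X_2]^{S_2}$, the identities $(fY_1X_1g)T=T(fY_2X_2g)$ and $(fY_2X_2g)T=T(fY_1X_1g)+(t-t^{-1})f(Y_2X_2-Y_1X_1)g$ in $\mathbb{H}_{t,q}$. Feeding these into $\pi(ewe)=(1+t^2)^{-2}\big(w+t\,\Theta(w)+t^2\sigma(w)\big)$ — where $\sigma$ is the simultaneous swap $Y_1\leftrightarrow Y_2$, $X_1\leftrightarrow X_2$ and $\Theta(w)$ is the no‑$T$ part of $wT$ — the cross terms cancel and one gets $w+t\Theta(w)+t^2\sigma(w)=(1+t^2)w$, so again $\pi\Phi(\cdots)=(1+t^2)^{-1}w$. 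Hence in every case
$$\pi\Phi\big(Q_1^{a_1}Q_2^{a_2}R^{\epsilon}P_1^{b_1}P_2^{b_2}\big)=\frac{1}{1+t^2}\,(Y_1+Y_2)^{a_1}(Y_1Y_2)^{a_2}(t^{-2}Y_1X_1+Y_2X_2)^{\epsilon}(X_1+X_2)^{b_1}(X_1X_2)^{b_2}.$$

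Then I would prove that these Laurent polynomials are linearly independent. Everything in sight is $\mathbb{Z}^2$‑graded and $\pi\Phi$ is graded, so it suffices to work in a fixed bidegree $(M,N)$. As $e_1(Y)=Y_1+Y_2$ and $e_2(Y)=Y_1Y_2$ are algebraically independent with $e_2(Y)$ invertible, $\{e_1(Y)^{a_1}e_2(Y)^{a_2}\}$ is a basis of each graded piece of $\mathbb{C}[Y_1^{\pm1},Y_2^{\pm1}]^{S_2}$, and likewise for $X$. Thus a linear dependence among the images in bidegree $(M,N)$ is precisely an identity $A+\rho B=0$ in $\mathbb{C}[Y_1^{\pm1},Y_2^{\pm1},X_1^{\pm1},X_2^{\pm1}]$, with $\rho=t^{-2}Y_1X_1+Y_2X_2$, where $A$ lies in the degree $(M,N)$ part of $\mathbb{C}[Y^{\pm1}]^{S_2}\otimes\mathbb{C}[X^{\pm1}]^{S_2}$ (the $\epsilon=0$ contribution) and $B$ lies in its degree $(M-1,N-1)$ part (the $\epsilon=1$ contribution); and it is enough to conclude $A=B=0$. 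Applying the partial swap $\sigma_Y$, which fixes both $A$ and $B$, to $A+\rho B=0$ gives $A+\sigma_Y(\rho)B=0$, hence $(\rho-\sigma_Y(\rho))B=0$; but $\rho-\sigma_Y(\rho)=(Y_1-Y_2)(t^{-2}X_1-X_2)$ is a nonzero element of the integral domain $\mathbb{C}[Y_1^{\pm1},Y_2^{\pm1},X_1^{\pm1},X_2^{\pm1}]$, so $B=0$, whence $A=0$, whence all coefficients vanish. This gives the linear independence in $e\mathbb{H}_{t,q}e$; and since by Lemma \ref{PBW-A} the monomials $Q_1^{a_1}Q_2^{a_2}R^{\epsilon}P_1^{b_1}P_2^{b_2}$ form a basis of $\mathcal{A}$, injectivity of $\Phi$ follows.

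The only mildly delicate step is the explicit evaluation of $\pi\Phi$ on the monomials containing $R$: one must track the no‑$T$ part of right multiplication by $T$ and notice the cancellation producing the clean formula. The injectivity of $\pi$ on $e\mathbb{H}_{t,q}e$, the grading reduction, and the final $\sigma_Y$‑argument are all routine.
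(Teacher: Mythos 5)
Your argument is correct, and it follows the same overall strategy as the paper — use the PBW basis of $\mathbb{H}_{t,q}$ to reduce linear independence in $e\mathbb{H}_{t,q}e$ to an identity of commutative (Laurent) polynomials, then kill the $R$-contribution by a symmetrize-and-subtract trick — but the middle steps are organized differently. The paper strips off the idempotent $e$ by noting $e = (1+tT)/(1+t^2)$ and that the $T$- and $1$-components are $R_{t,q}$-linearly independent by PBW; then, working in a fixed $\mathbb{Z}^2$-degree, it isolates the coefficient of the maximal $Y_1$-power in the $\mathbb{Z}^4$-graded PBW expansion, arriving at $f + X_1 g = 0$ with $f,g$ symmetric in $X_1,X_2$, and finishes by symmetrizing to get $(X_1-X_2)g = 0$. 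You instead construct a linear projection $\pi$ onto the span $\mathcal{P}$ of the $T$-free PBW monomials, verify that $\pi$ is injective on $e\mathbb{H}_{t,q}$, compute the clean formula $\pi\Phi(\cdot) = (1+t^2)^{-1}w$ (including the cross-term cancellation in $\pi(ewe)$, which is correct — I checked that $\pi(wT)=(t-t^{-1})(v-u)$ and $\pi(TwT)=u+t^{-2}v=\sigma(w)$ for $w=t^{-2}u+v$, so $w + t\Theta(w)+t^2\sigma(w)=(1+t^2)w$), and then argue directly in the integral domain $R_{t,q}[Y^{\pm},X^{\pm}]$ using the partial swap $\sigma_Y$ to produce the nonzerodivisor $\rho - \sigma_Y(\rho)=(Y_1-Y_2)(t^{-2}X_1-X_2)$. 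What your approach buys is a uniform identification of $\pi\Phi$ with explicit Laurent polynomials, which makes the final step cleaner (no need to extract a single extremal PBW coefficient); the price is the auxiliary cancellation computation for $\pi(ewe)$, which the paper avoids by handling $e$ first. Both routes are sound, and both rely on Proposition \ref{prop-AllRelnsDAHA}(2) for the PBW basis of $\mathbb{H}_{t,q}$.
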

\begin{proof}
We will use the PBW theorem for $\mathbb{H}_{t,q}$ (Proposition \ref{prop-AllRelnsDAHA} (2)) to prove that the above set is linearly independent in $e\mathbb{H}_{t,q}e\subseteq \mathbb{H}_{t,q}$. 

Assume that for some finite indexing set $I$, some collection of nonzero scalars $\alpha_i$, $i\in I$, and some $\epsilon_i \in \{0,1\}, a_{i,1},b_{i,1} \in \mathbb{N}_0, a_{i,2},b_{i,2} \in \mathbb{Z}$, $i\in I$, we have 
$$\sum_{i\in I} \alpha_i \Phi(Q_1^{a_{i,1}}Q_2^{a_{i,2}}R^{\epsilon_i} P_1^{b_{i,1}}P_2^{b_{i,2}})=0.$$
Using the definition of the map $\Phi$ and the observation \eqref{eP1e=eP1=P1e}, we can write this as
$$\sum_{i\in I} \alpha_i \, e \, (Y_1+Y_2)^{a_{i,1}}(Y_1Y_2)^{a_{i,2}}(t^{-2}Y_1X_1+Y_2X_2)^{\epsilon_i} (X_1+X_2)^{b_{i,1}}(X_1X_2)^{b_{i,2}}=0.$$
Using $e=\frac{1+tT}{1+t^2}$ and the PBW theorem for DAHA, it follows that also
\begin{align}
\sum_{i\in I} \alpha_i (Y_1+Y_2)^{a_{i,1}}(Y_1Y_2)^{a_{i,2}}(t^{-2}Y_1X_1+Y_2X_2)^{\epsilon_i} (X_1+X_2)^{b_{i,1}}(X_1X_2)^{b_{i,2}}=0.\label{lin-alg-1}
\end{align}
The subalgebra generated by $Y_1^{\pm1},Y_2^{\pm1}$ is commutative. After multiplying \eqref{lin-alg-1} on the left by some power of $Y_1Y_2$ if necessary, we can assume that $a_{i,2}\ge 0$ for all $i\in I$, and that there exists at least one $i\in I$ for which $a_{i,2}=0$. Similarly, by multiplying \eqref{lin-alg-1} on the right by some power of $X_1X_2$, we can assume that $b_{i,2}\ge 0$ for all $i\in I$.

As explained after Definition \ref{defDAHA}, the algebra $\mathbb{H}_{t,q}$ is $\mathbb{Z}^2$ graded. We may assume that all terms in the expression \eqref{lin-alg-1} are of the same bigraded degree $(M,N)$. This means that for all $i\in I$ we have 
\begin{align*}
a_{i,1}+2a_{i,2}+\epsilon_i&=M\\
b_{i,1}+2b_{i,2}+\epsilon_i&=N.
\end{align*}

The PBW theorem for DAHA (Proposition \ref{prop-AllRelnsDAHA} (2)) implies that, as a vector space,  $\mathbb{H}_{t,q}$ is also $\mathbb{Z}^4$ graded by $(\deg(Y_1),\deg(Y_2), \deg(X_1), \deg(X_2))$. The highest power of $Y_1$ in the term $\alpha_i (Y_1+Y_2)^{a_{i,1}}(Y_1Y_2)^{a_{i,2}}(t^{-2}Y_1X_1+Y_2X_2)^{\epsilon_i} (X_1+X_2)^{b_{i,1}}(X_1X_2)^{b_{i,2}}$ of the expression \eqref{lin-alg-1} is equal to 
$$a_{i,1}+a_{i,2}+\epsilon_i=M-a_{i,2}.$$
Using the assumption that there exists an $i\in I$ with $a_{i,2}=0$ and the PBW theorem for $\mathbb{H}_{t,q}$, we get that the coefficient of $Y_1^{M}$ in \eqref{lin-alg-1} is  
\begin{align}
\sum_{\substack{i\in I \\ a_{i,2}=0\\ \epsilon_i=0}} \alpha_i (X_1+X_2)^{b_{i,1}}(X_1X_2)^{b_{i,2}}+\sum_{\substack{i\in I \\ a_{i,2}=0\\ \epsilon_i=1}} \alpha_i t^{-2} X_1(X_1+X_2)^{b_{i,1}}(X_1X_2)^{b_{i,2}}=0,\label{lin-alg-3}
\end{align}
while at least one of the coefficients $\alpha_i$ in \eqref{lin-alg-3} is nonzero. Let 
\begin{align}
f=\sum_{\substack{i\in I \\ a_{i,2}=0\\ \epsilon_i=0}} \alpha_i (X_1+X_2)^{b_{i,1}}(X_1X_2)^{b_{i,2}}, \quad g=\sum_{\substack{i\in I \\ a_{i,2}=0\\ \epsilon_i=1}} \alpha_i t^{-2} (X_1+X_2)^{b_{i,1}}(X_1X_2)^{b_{i,2}}. \label{lin-alg-4}
\end{align}
Formula \eqref{lin-alg-3} states that
\begin{align} f+X_1g=0. \label{lin-alg-5}
\end{align}
Polynomials $f$ and $g$ are symmetric $X_1,X_2$, so symmetrizing \eqref{lin-alg-5} we get that 
$$f+\frac{X_1+X_2}{2}g=0.$$
Subtracting this from \eqref{lin-alg-5} we get that
$$(X_1-X_2)g=0.$$
This and the PBW basis imply that $g=0$, and so also $f=0$. 

At least one of the coefficients $\alpha_i$ in at least one of the expressions in \eqref{lin-alg-4} is nonzero. On the other hand, another use of the PBW theorem for DAHA implies that the set $\{ (X_1+X_2)^{b_{i,1}}(X_1X_2)^{b_{i,2}}\}$ is linearly independent. This is a contradiction.

\end{proof}

To show that the injective graded homomorphism $\Phi:\mathcal{A}\to \mathbb{H}_{t,q}$ is surjective, we would like to employ a dimension argument. However, these algebras are infinite dimensional, and bigraded with infinite dimensional bigraded pieces. To work around this problem, we introduce the following auxilary algebras. 

\begin{definition}\label{A+defn}
Let $\mathbb{H}_{t,q}^+$ denote the subalgebra of $\mathbb{H}_{t,q}$ generated by $T^{\pm1}, X_1,X_2,Y_1,Y_2$. Let $\mathcal{A}^+$ denote the subalgebra of $\mathcal{A}$ generated by $P_1,P_2,Q_1,Q_2$ and $R$.
\end{definition}

These subalgebras are also $\mathbb{Z}^2$ graded, and have finite dimensional bigraded pieces. The homomorphism $\Phi$ restricts to a graded homomorphism $\mathcal{A}^+\to e\mathbb{H}_{t,q}^+e$. By the next lemma, we can easily recover information about $\mathbb{H}_{t,q}$, $e\mathbb{H}_{t,q}e$ and $\mathcal{A}$ from the information about $\mathbb{H}_{t,q}^+$, $e\mathbb{H}_{t,q}^+e$ and $\mathcal{A}^+$.

\begin{lemma}\label{localiczationDAHA}
\begin{enumerate}
\item The multiplicative set $\{ (Y_1Y_2)^a(X_1X_2)^b\}$ in $\mathbb{H}_{t,q}$ satisfies the Ore condition, and the multiplicative set $\{ Q_2^aP_2^b\}$ in $\mathcal{A}$ satisfies the Ore condition.
\item The algebra $\mathbb{H}_{t,q}$ is the localization of $\mathbb{H}_{t,q}^+$ by $(Y_1Y_2)^{-1}(X_1X_2)^{-1}$, and the algebra $\mathcal{A}$ is the localization of $\mathcal{A}^+$ by $ Q_2^{-1}P_2^{-1}$.
\end{enumerate}
\end{lemma}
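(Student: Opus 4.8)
\textbf{Proof plan for Lemma \ref{localiczationDAHA}.}
The plan is to verify the Ore conditions by exhibiting explicit commutation relations that allow one to move the elements $(Y_1Y_2)^a(X_1X_2)^b$ (resp. $Q_2^aP_2^b$) past an arbitrary element, and then to identify the localizations with the original algebras by a universal-property argument using the PBW bases already established.

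For part (1), I would argue separately for $\mathbb{H}_{t,q}$ and for $\mathcal{A}$. In $\mathbb{H}_{t,q}$, recall from \eqref{X1X2-is-q-central} that $Y_1Y_2$ and $X_1X_2$ are \emph{$q$-central}: for homogeneous $h$ of degree $(M,N)$ one has $Y_1Y_2\,h = q^{2N}h\,Y_1Y_2$ and $X_1X_2\,h = q^{-2M}h\,X_1X_2$. Since these two elements commute with each other (both being products of the commuting pairs, and $X_i$, $Y_j$ satisfying the cross relations \eqref{defDAHA-relations2}), it follows that $\{(Y_1Y_2)^a(X_1X_2)^b : a,b\in\ZZ\}$ is a multiplicative set each of whose elements is normal: for any homogeneous $h$ of degree $(M,N)$, $(Y_1Y_2)^a(X_1X_2)^b\,h = q^{2aN-2bM}\,h\,(Y_1Y_2)^a(X_1X_2)^b$, and this extends to all $h$ by additivity. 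A set of normal elements automatically satisfies both the left and right Ore conditions, so the localization exists. The identical argument works in $\mathcal{A}$: relations \eqref{P2Q2}--\eqref{RQ2} together with \eqref{P1P2}, \eqref{Q1Q2} show that $P_2$ and $Q_2$ each $q$-commute with every generator and commute with each other, hence $Q_2^aP_2^b$ is normal and the Ore condition holds.

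For part (2), note first that $\mathbb{H}_{t,q}$ is generated by $T^{\pm1}, X_1,X_2,Y_1,Y_2$ together with $(X_1X_2)^{-1}$ and $(Y_1Y_2)^{-1}$: indeed $X_1^{-1} = X_2(X_1X_2)^{-1}$, $Y_1^{-1}=Y_2(Y_1Y_2)^{-1}$ by \eqref{defDAHA-relations1}, and $X_2^{-1},Y_2^{-1}$ similarly, so the larger generating set of Definition \ref{defDAHA} is recovered from $\mathbb{H}_{t,q}^+$ by adjoining $(X_1X_2)^{-1}(Y_1Y_2)^{-1}$. Hence there is a surjective algebra map from the Ore localization $(\mathbb{H}_{t,q}^+)[(Y_1Y_2)^{-1}(X_1X_2)^{-1}]$ onto $\mathbb{H}_{t,q}$, given by the canonical map on $\mathbb{H}_{t,q}^+$ and sending the adjoined inverses to the honest inverses. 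To see it is injective, compare PBW bases: by Proposition \ref{prop-AllRelnsDAHA}(2) the monomials $T^\epsilon Y_1^{a_1}Y_2^{a_2}X_1^{b_1}X_2^{b_2}$ with $a_i,b_i\in\ZZ$ form a basis of $\mathbb{H}_{t,q}$, while the subset with $a_i,b_i\ge 0$ spans $\mathbb{H}_{t,q}^+$, and every localized monomial can be written as $(Y_1Y_2)^{-m}(X_1X_2)^{-n}$ times such a nonnegative monomial; since the localization of $\mathbb{H}_{t,q}^+$ by the normal set $\{(Y_1Y_2)^a(X_1X_2)^b\}$ has the $(Y_1Y_2)^{-m}(X_1X_2)^{-n}\cdot(\text{PBW monomial})$ as a spanning set, and these map bijectively onto the PBW basis of $\mathbb{H}_{t,q}$, the map is an isomorphism. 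The same argument, using Lemma \ref{PBW-A} in place of Proposition \ref{prop-AllRelnsDAHA}(2), identifies $\mathcal{A}$ with the localization of $\mathcal{A}^+$ at $Q_2^{-1}P_2^{-1}$.

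The only mildly delicate point — the ``main obstacle'' — is making the PBW-basis comparison in part (2) rigorous: one must check that the Ore localization of $\mathbb{H}_{t,q}^+$ by $\{(Y_1Y_2)^a(X_1X_2)^b\}$ has a spanning set of the expected form (clearing denominators using normality to push all the negative powers to the left) with no collapse, i.e.\ that no relations are introduced beyond those already present, and this is precisely guaranteed by the fact that $\mathbb{H}_{t,q}^+$ has no $(Y_1Y_2)$- or $(X_1X_2)$-torsion, which follows from its PBW basis. Everything else is a routine bookkeeping of normality relations.
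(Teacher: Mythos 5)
The paper offers no proof of Lemma~\ref{localiczationDAHA}; the authors treat it as routine. Your proof supplies exactly the argument one would expect, and it is essentially correct: the Ore condition follows because the inner grading (relations~\eqref{X1X2-is-q-central} in $\mathbb{H}_{t,q}$, the corresponding $q$-commutation relations in $\mathcal{A}$) makes $Y_1Y_2$, $X_1X_2$ (resp.\ $Q_2$, $P_2$) normal elements, and the identification of the localization uses surjectivity from the generation relations plus injectivity from the PBW bases (Proposition~\ref{prop-AllRelnsDAHA}(2) and Lemma~\ref{PBW-A}).

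One small factual slip: you assert that $Y_1Y_2$ and $X_1X_2$ commute with each other. They do not; applying~\eqref{X1X2-is-q-central} with $h=X_1X_2$, which has bidegree $(0,2)$, gives $Y_1Y_2\cdot X_1X_2 = q^{4}\,X_1X_2\cdot Y_1Y_2$, so they merely $q$-commute. Consequently the set $\{(Y_1Y_2)^a(X_1X_2)^b\}$ is closed under multiplication only up to invertible scalars; this is harmless for Ore localization over $R_{t,q}$ or $\CC$ (one should either saturate the set by units, or simply note that each of $Y_1Y_2$ and $X_1X_2$ is individually normal, so the multiplicatively closed set they generate is an Ore set). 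The same caveat applies to $P_2$ and $Q_2$ via~\eqref{P2Q2}. With that correction, the argument goes through, including the torsion-freeness point you correctly isolate as the one place where the PBW basis does real work.
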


We will first prove that the restriction of the homomorphism $\Phi$  to $\mathcal{A}^+\to e\mathbb{H}_{t,q}^+e$ is an isomorphism, and then use this lemma to deduce the same about $\Phi:\mathcal{A}\to e\mathbb{H}_{t,q}e$. 

Let $\mathbf{A}$ be any algebra over $\mathbb{C}$ or over $R_{t,q}$ with a $\mathbb{Z}^2$ grading. If working over the ground ring $\mathbb{C}$, assume that every bigraded piece $\mathbf{A}[M,N]$ is a finite dimensional $\mathbb{C}$-vector space with dimension $\dim (\mathbf{A}[M,N])$. If working over the ground ring $R_{t,q}$, assume that every bigraded piece $\mathbf{A}[M,N]$ is a free $R_{t,q}$-modules of finite rank $\dim (\mathbf{A}[M,N])$. In either case, we record the dimensions of the bigraded pieces as a Hilbert series in two variables given by 
\begin{align*}
\mathrm{Hilb}_{\mathbf{A}}(u,v)=\sum_{(M,N)}\dim (\mathbf{A}[M,N])u^Mv^N.
\end{align*}
Let us compute the Hilbert series of the algebras $\mathcal{A}^+$ and $e\mathbb{H}_{t,q}^+e$.

\begin{lemma}\label{Hilb-A}
The set $$\{Q_1^{a_1}Q_2^{a_2}R^\epsilon P_1^{b_1}P_2^{b_2} \ | \  \epsilon \in \{0,1\}, a_i,b_i \in \mathbb{N}_0\}$$ is a basis of the algebra $\mathcal{A}^+$. The Hilbert series of this bigraded algebra is given by 
$$\mathrm{Hilb}_{\mathcal{A}^+}(u,v)=\frac{1}{1-u}\cdot \frac{1}{1-u^2}\cdot \frac{1}{1-v}\cdot \frac{1}{1-v^2}\cdot (1+uv).$$
\end{lemma}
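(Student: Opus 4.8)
The statement to prove is Lemma \ref{Hilb-A}: that the proposed monomials in $P_1, P_2, Q_1, Q_2, R$ with nonnegative exponents form a basis of $\mathcal{A}^+$, and that the Hilbert series factors as stated. The plan is to first deduce the basis claim for $\mathcal{A}^+$ directly from the already-established basis of $\mathcal{A}$ (Lemma \ref{PBW-A}), and then read off the Hilbert series by a routine generating-function bookkeeping against the $\ZZ^2$-grading.

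First I would handle the basis claim. By Lemma \ref{PBW-A} we already know that $\{Q_1^{a_1}Q_2^{a_2}R^\epsilon P_1^{b_1}P_2^{b_2} : \epsilon\in\{0,1\},\ a_1,b_1\in\mathbb{N}_0,\ a_2,b_2\in\ZZ\}$ is a basis of $\mathcal{A}$. Since $\mathcal{A}$ is the localization of $\mathcal{A}^+$ at the Ore set $\{Q_2^aP_2^b\}$ (Lemma \ref{localiczationDAHA}, which applies once one notes $\mathcal{A}^+$ and $\mathcal{A}$ are defined by the same generators, the former without inverting $P_2,Q_2$), the natural map $\mathcal{A}^+\to\mathcal{A}$ is injective — indeed the Ore localization of a domain-like filtered algebra at a regular central-up-to-$q$-scalar set is injective, and one can see regularity of $P_2,Q_2$ directly from the PBW basis of $\mathcal{A}$, since multiplication by $Q_2^aP_2^b$ permutes basis elements up to scalars. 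Hence $\mathcal{A}^+$ embeds in $\mathcal{A}$, and under this embedding the subset of the $\mathcal{A}$-basis with $a_2,b_2\geq 0$ lies in $\mathcal{A}^+$ and is linearly independent there. It remains to see these span $\mathcal{A}^+$: the same Diamond Lemma computation as in the proof of Lemma \ref{PBW-A} — the straightening relations \eqref{P1P2}--\eqref{R2reln} never lower an exponent of $P_2$ or $Q_2$ below its starting value on a monomial in the positive generators — shows directly that every word in $P_1,P_2,Q_1,Q_2,R$ reduces to a linear combination of the claimed normal-form monomials with all exponents in $\mathbb{N}_0$. (Concretely: $P_2,Q_2$ $q$-commute past everything with no correction terms, and the remaining relations \eqref{P1Q1},\eqref{P1R},\eqref{RQ1},\eqref{R2reln} only introduce terms with $P_2$- or $Q_2$-exponents weakly larger than before.) So the set is a basis of $\mathcal{A}^+$.

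Next, for the Hilbert series, I would simply tally degrees. Recall $\deg(P_1)=(0,1)$, $\deg(P_2)=(0,2)$, $\deg(Q_1)=(1,0)$, $\deg(Q_2)=(2,0)$, $\deg(R)=(1,1)$. Writing a basis monomial $Q_1^{a_1}Q_2^{a_2}R^{\epsilon}P_1^{b_1}P_2^{b_2}$, its bidegree is $(a_1+2a_2+\epsilon,\ \epsilon+b_1+2b_2)$. Since the data $(a_1,a_2,\epsilon,b_1,b_2)$ range independently over $\mathbb{N}_0^2\times\{0,1\}\times\mathbb{N}_0^2$ and the monomials are linearly independent, the Hilbert series is the product of the contributions:
\begin{align*}
\mathrm{Hilb}_{\mathcal{A}^+}(u,v)
&=\Big(\sum_{a_1\geq0}u^{a_1}\Big)\Big(\sum_{a_2\geq0}u^{2a_2}\Big)(1+uv)\Big(\sum_{b_1\geq0}v^{b_1}\Big)\Big(\sum_{b_2\geq0}v^{2b_2}\Big)\\
&=\frac{1}{1-u}\cdot\frac{1}{1-u^2}\cdot(1+uv)\cdot\frac{1}{1-v}\cdot\frac{1}{1-v^2},
\end{align*}
where the factor $1+uv$ records the two choices $\epsilon\in\{0,1\}$ of degree $(0,0)$ and $(1,1)$ respectively. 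This is exactly the asserted formula.

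The only genuine content here is the basis claim, and within that the one thing to be careful about is the implication ``PBW basis of $\mathcal{A}$'' $\Rightarrow$ ``the positive-exponent subset is a basis of $\mathcal{A}^+$'': one must confirm both that $\mathcal{A}^+\to\mathcal{A}$ is injective (so independence transfers down) and that the straightening process stays within positive exponents (so spanning holds). Both follow by inspection of the relations \eqref{P1P2}--\eqref{R2reln} — no relation ever decreases a $P_2$- or $Q_2$-exponent — so I expect no real obstacle; the Hilbert-series computation is pure bookkeeping.
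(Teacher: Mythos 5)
Your proposal is correct and essentially matches the paper's: the paper establishes the $\mathcal{A}^+$ basis as an intermediate step in the Diamond-Lemma proof of Lemma \ref{PBW-A} (proving it for $\mathcal{A}^+$ first, then localizing to get $\mathcal{A}$) and simply cites that here, and the Hilbert-series bookkeeping is identical to yours. The one superfluous step in your writeup is the discussion of injectivity of $\mathcal{A}^+\to\mathcal{A}$ via Ore theory: by Definition \ref{A+defn}, $\mathcal{A}^+$ is the subalgebra of $\mathcal{A}$ generated by $P_1,P_2,Q_1,Q_2,R$, so the inclusion is injective by fiat and linear independence of the nonnegative-exponent monomials transfers immediately from Lemma \ref{PBW-A}.
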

\begin{proof}
The first assertion follows directly from Lemma \ref{PBW-A}. To calculate the Hilbert series, note that $\deg(Q_1^{a_1}Q_2^{a_2}R^\epsilon P_1^{b_1}P_2^{b_2})= (a_1+2a_2+\epsilon,b_1+2b_2+\epsilon)$, so 
\begin{align*}
\mathrm{Hilb}_{\mathcal{A}^+}(u,v)&=\sum_{\substack{a_1,a_2\ge 0\\b_1,b_2\ge 0 \\ \epsilon=0,1}}u^{a_1+2a_2+\epsilon}v^{b_1+2b_2+\epsilon}\\
&=\frac{1}{1-u}\cdot \frac{1}{1-u^2}\cdot \frac{1}{1-v}\cdot \frac{1}{1-v^2}\cdot (1+uv).
\end{align*}
\end{proof}

\begin{lemma}\label{Hilb-sDAHA}
The Hilbert series of the bigraded algebra $ e\mathbb{H}_{t,q}^+e$  is given by 
$$\mathrm{Hilb}_{ e\mathbb{H}_{t,q}^+e}(u,v)=\frac{1}{1-u}\cdot \frac{1}{1-u^2}\cdot \frac{1}{1-v}\cdot \frac{1}{1-v^2}\cdot (1+uv).$$ 
\end{lemma}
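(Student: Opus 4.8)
The plan is to compute $\mathrm{Hilb}_{e\mathbb{H}_{t,q}^+e}(u,v)$ directly from the PBW basis of $\mathbb{H}_{t,q}$ by carefully tracking what the idempotent $e$ does on each bigraded piece. Since $e\mathbb{H}_{t,q}^+e = e\cdot \mathbb{H}_{t,q}^+ \cdot e$ and $e$ is an idempotent, the dimension of the bigraded piece $e\mathbb{H}_{t,q}^+e[M,N]$ equals the rank of the operator ``left and right multiply by $e$'' acting on $\mathbb{H}_{t,q}^+[M,N]$. Using the PBW basis $\{T^\delta Y_1^{a_1}Y_2^{a_2}X_1^{b_1}X_2^{b_2}\}$ from Proposition \ref{prop-AllRelnsDAHA}(2), the bigraded piece $\mathbb{H}_{t,q}^+[M,N]$ (over the appropriate ground ring) has as basis those monomials with $a_1,a_2,b_1,b_2\ge 0$, $\delta\in\{0,1\}$, and $a_1+a_2=M$, $b_1+b_2=N$ when $\delta=0$, or $a_1+a_2=M-1$, $b_1+b_2=N-1$ when $\delta=1$. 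So $\dim \mathbb{H}_{t,q}^+[M,N] = (M+1)(N+1) + MN$ for $M,N\ge 1$, with the obvious modifications when $M$ or $N$ is $0$.

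Next I would compute $\dim e\mathbb{H}_{t,q}^+e[M,N]$. The key structural fact is the decomposition $\mathbb{H}_{t,q}^+ = e\mathbb{H}_{t,q}^+ \oplus (1-e)\mathbb{H}_{t,q}^+$ as a left module over the Hecke subalgebra, and similarly on the right; concretely $e\mathbb{H}_{t,q}^+e$ is the image of the projector $h\mapsto ehe$. Because the grading is inner and $T$ has degree $(0,0)$, the Hecke subalgebra $\mathbb{C}[T]/((T+t^{-1})(T-t))$ is a degree-$(0,0)$ subalgebra acting on each bigraded piece on both sides, so each $\mathbb{H}_{t,q}^+[M,N]$ is a bimodule over it, decomposing into the four isotypic blocks $e(-)e$, $e(-)(1-e)$, $(1-e)(-)e$, $(1-e)(-)(1-e)$; the first block is exactly what we want to count. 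The cleanest way to count it is to observe that $\mathbb{H}_{t,q}^+ \cong \mathbb{C}[T]/(\ldots) \otimes \mathbb{C}[X_1^{\pm},X_2^{\pm},Y_1^{\pm},Y_2^{\pm}]$ as a left module over the Hecke algebra is not quite a tensor decomposition, but using the PBW basis written as $T^\delta\cdot(\text{polynomial in }Y,X)$, left multiplication by $T$ only changes the $T^\delta$ factor (up to lower-order terms from the quadratic relation), so $\mathbb{H}_{t,q}^+ \cong \mathbb{C}[T]/(\ldots)\otimes_{\mathbb C} P$ as a graded left $\mathbb{C}[T]$-module, where $P$ is the span of the PBW monomials with $\delta=0$, i.e. the ``polynomial part.'' Then $e\mathbb{H}_{t,q}^+ \cong e\otimes P$, and one must further intersect with the right $e$-action.

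The decisive computation is therefore to understand the right action of $e$ on $e\mathbb{H}_{t,q}^+$. Writing elements of $e\mathbb{H}_{t,q}^+$ as $e\cdot f(Y)\cdot g(X)$ for $f,g$ Laurent-polynomial-like expressions, and using the relations \eqref{defDAHA-relations1}, \eqref{defDAHA-relations2} to move a right factor of $e = (1+tT)/(1+t^2)$ past the $X$'s and $Y$'s, one finds (as in the proof of Lemma \ref{Phi-inj}) that $e f(Y) g(X) e$ only depends on the symmetrizations of $f$ and $g$ together with one extra ``$R$-type'' term $e(t^{-2}Y_1X_1 + Y_2X_2)$ — exactly the image of $\Phi$. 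Concretely, I expect to show that $e\mathbb{H}_{t,q}^+e$ has as a spanning set $\{e\cdot (Y_1+Y_2)^{a_1}(Y_1Y_2)^{a_2}(t^{-2}Y_1X_1+Y_2X_2)^\epsilon (X_1+X_2)^{b_1}(X_1X_2)^{b_2}\}$ with $a_i,b_i\ge 0$, $\epsilon\in\{0,1\}$, which is precisely the image of the basis from Lemma \ref{Hilb-A}; since Lemma \ref{Phi-inj} shows these are linearly independent, counting degrees $(a_1+2a_2+\epsilon, b_1+2b_2+\epsilon)$ gives the claimed Hilbert series. The main obstacle is proving that this set really does \emph{span} $e\mathbb{H}_{t,q}^+e$ — equivalently, a purely combinatorial identity matching $\dim e\mathbb{H}_{t,q}^+e[M,N]$ with the coefficient of $u^Mv^N$ in $\tfrac{1+uv}{(1-u)(1-u^2)(1-v)(1-v^2)}$. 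I would do this by the symmetrization argument: for a PBW monomial $T^\delta Y_1^{a_1}Y_2^{a_2}X_1^{b_1}X_2^{b_2}$ with $\delta=0$, $e(Y_1^{a_1}Y_2^{a_2}X_1^{b_1}X_2^{b_2})e$ is computed by symmetrizing in $X_1\leftrightarrow X_2$ on the right and $Y_1\leftrightarrow Y_2$ on the left (the off-diagonal terms feed into lower monomials plus the $R$-term), so modulo lower-order monomials the image is the corresponding monomial symmetric function in $(Y_1,Y_2)$ times one in $(X_1,X_2)$; a standard induction on bidegree then shows the symmetric-plus-$R$ monomials span. This reduces the lemma to the identity $\#\{(a_1,a_2,b_1,b_2,\epsilon): a_i,b_i\ge 0, \epsilon\in\{0,1\}, a_1+2a_2+\epsilon=M, b_1+2b_2+\epsilon=N\}$ equals the stated Hilbert coefficient, which is immediate from Lemma \ref{Hilb-A}. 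Thus once the spanning statement is in place, the lemma follows, and combined with Lemmas \ref{Hilb-A} and \ref{Phi-inj} it will yield surjectivity of $\Phi$ on the $+$-subalgebras.
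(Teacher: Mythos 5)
Your proposal does not actually prove the lemma; it sketches a different strategy whose key step you explicitly flag as the ``main obstacle'' and leave unfinished, and it misses the mechanism the paper uses to make the computation tractable.

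First, a concrete error: you state $\dim\mathbb{H}_{t,q}^+[M,N] = (M+1)(N+1)+MN$, on the assumption that when $\delta = 1$ the exponents must satisfy $a_1+a_2 = M-1$, $b_1+b_2=N-1$. But the grading is defined with $\deg T = (0,0)$, not $(1,1)$; the correct count is $2(M+1)(N+1)$ regardless of $\delta$. This mistake is not fatal to your plan (you don't actually use the formula), but it matters conceptually: because $T$ sits in degree $(0,0)$, the finite Hecke algebra acts on each fixed bigraded piece, which is exactly what makes the paper's argument run.

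The deeper issue is that your proposal inverts the logical architecture. The role of this lemma in the paper is to give an \emph{independent} count of $\dim e\mathbb{H}_{t,q}^+[M,N]e$, so that comparing with Lemma \ref{Hilb-A} forces the already-injective map $\Phi|_{\mathcal{A}^+}$ to be surjective. Your plan is to first show that the $\Phi$-images of the PBW monomials of $\mathcal{A}^+$ span $e\mathbb{H}_{t,q}^+e$, and then read off the Hilbert series. That would indeed imply the lemma --- but spanning is precisely the surjectivity of $\Phi|_{\mathcal{A}^+}$, the content of the theorem, and your argument for it (``the off-diagonal terms feed into lower monomials plus the $R$-term, so a standard induction on bidegree shows the symmetric-plus-$R$ monomials span'') is a sketch, not a proof. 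In the noncommutative setting the left-$e$ and right-$e$ symmetrizations do not simply factor; moving $e$ past the $X$'s and $Y$'s using \eqref{defDAHA-relations2} produces a cascade of $T$-terms whose contributions would have to be organized inductively and shown to land in the claimed span. You acknowledge this is the hard part and then assert it. As written, the proof has a genuine gap.

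The paper's route avoids this entirely. Using $\deg T = (0,0)$, one observes that $e\mathbb{H}_{t,q}^+e$ is the space of invariants for the right action of $S_2$ (via the Hecke algebra) on $e\mathbb{H}_{t,q}^+$, whose bigraded dimension is $(M+1)(N+1)$. The invariant dimension $\frac{1}{2}\left(\operatorname{Tr}(\operatorname{id}) + \operatorname{Tr}(s)\right)$ is an integer-valued rational function of $t$ (and is independent of $q$), hence is constant in $t$; so it suffices to compute at $t = q = 1$, where everything is the classical diagonal $S_2$-action on $\mathbb{C}[X_1,X_2,Y_1,Y_2]$ and an elementary character computation gives the stated series. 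This deformation-to-$(1,1)$ step is the essential idea your proposal is missing, and it is what makes the lemma an independent input rather than a restatement of surjectivity.
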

\begin{proof}
Consider $e\mathbb{H}_{t,q}^+$, as a vector space over $\mathbb{C}$ or as a free module over $R_{t,q}$. By Proposition \ref{prop-AllRelnsDAHA} (2), a basis of $e\mathbb{H}_{t,q}^+$ is the set $\{e Y_1^{a_1} Y_2^{a_2}X_1^{b_1}X_2^{b_2} \ | \  a_i,b_i \in \mathbb{N}_0\}$. The group $S_2$ acts on $e\mathbb{H}_{t,q}^+$ by right multiplication, and the space of invariants is precisely the spherical subalgebra $e\mathbb{H}_{t,q}^+e$. 

This $S_2$ action depends polynomially on $t^{\pm1}$, does not depend on $q$, and it preserves the finite dimensional bigraded pieces $eH^+_{t,q}[N,M]$. The dimension of the space of invariants in every bigraded piece can be calculated using group characters as 
$$\dim (e\mathbb{H}_{t,q}^+[N,M]e)=  \frac{1}{2}\cdot (\mathrm{Tr}(\mathrm{id_{e\mathbb{H}_{t,q}^+[N,M]}})+\mathrm{Tr}(\mathrm{s_{e\mathbb{H}_{t,q}^+[N,M]}})).$$
This expression takes values in $\mathbb{N}_0$, does not depend on $q$, and the only term which depends on $t$ is $\mathrm{Tr}(\mathrm{s_{(e\mathbb{H}_{t,q}^+[N,M]}})$, which is in $\mathbb{C}[t^{\pm1}][(t^2+1)^{-1}]$. Thus, the above formula gives a polynomial function from $\{t\in \mathbb{C}^\times | t^2\ne 1 \}$ to $\mathbb{Z}$. The only such functions are constants. This proves that $\dim (e\mathbb{H}_{t,q}^+[N,M]e)$ does not depend on $t$ and $q$. 

We will calculate the Hilbert series of $e\mathbb{H}_{t,q}^+e$ by calculating it at $t=1,q=1$. The graded action of $S_2$ on $eH^+_{1,1}$ can be identified with the usual permutation action of $S_2$ on the space $\mathbb{C}[X_1,X_2, Y_1,Y_2]$. Denoting the trivial character of the group $S_2$ by  $\chi_+$ and the sign character by $\chi_-$, we see that the two dimensional  permutation representation spanned by $X_1,X_2 $ has graded character $(\chi_++\chi_-)u$. Consequently, the bigraded characters of $\mathbb{C}[X_{1},X_{2}]$ and $\mathbb{C}[X_{1},X_{2},Y_1,Y_2]$ can be computed as
\begin{align*}
\chi_{\mathbb{C}[X_{1},X_{2}]}(u,v)&=\frac{1}{1-v\chi_+}\cdot \frac{1}{1-v\chi_-}\cdot \chi_+\\
\chi_{\mathbb{C}[X_{1},X_{2}, Y_{1}, Y_{2}]}(u,v)&=\frac{1}{1-u\chi_+}\cdot \frac{1}{1-u\chi_-}\cdot \frac{1}{1-v\chi_+}\cdot \frac{1}{1-v\chi_-}\cdot \chi_+\\
&= \frac{\chi_+}{1-u} \cdot \frac{\chi_++u\chi_-}{1-u^2} \cdot \frac{\chi_+}{1-v} \cdot \frac{\chi_++v\chi_-}{1-v^2}\\
&= \frac{(1+uv)\chi_++(u+v)\chi_-}{(1-u)(1-u^2)(1-v)(1-v^2)}.
\end{align*}
Reading off the $\chi_+$ coefficient, we get that the bigraded Hilbert series of the spherical DAHA equals
\begin{align*}
\mathrm{Hilb}_{ e\mathbb{H}_{t,q}^+e}(u,v)&= \mathrm{Hilb}_{ e\mathbb{H}_{1,1}^+e}(u,v)\\
&=\frac{1}{1-u}\cdot \frac{1}{1-u^2}\cdot \frac{1}{1-v}\cdot \frac{1}{1-v^2}\cdot (1+uv).
\end{align*}
\end{proof}

We are now ready to prove the main Theorem of section \ref{review-DAHA}, Theorem \ref{thm-sDAHApresentation}, which describes the spherical DAHA by generators and relations and gives a PBW basis for it. 

\begin{proof}[Proof of Theorem \ref{thm-sDAHApresentation}]
Let us prove that the map $\Phi:\mathcal{A}\to e\mathbb{H}_{t,q}e$, defined on generators by \eqref{sDAHA-pres-iso}, is an isomorphism.

 It is a $\mathbb{Z}^2$ graded algebra homomorphism by Lemma \ref{Phi-is-hom}, and injective by Lemma \ref{Phi-inj}. Its restriction $\Phi|_{\mathcal{A}^+}:\mathcal{A}^+\to e\mathbb{H}_{t,q}^+e$ to the algebra $\mathcal{A}^+$ from Definition \ref{A+defn} is thus also a graded injective homomorphism. By Lemma \ref{Hilb-A} and Lemma \ref{Hilb-sDAHA}, the algebras  $\mathcal{A}^+$ and $e\mathbb{H}_{t,q}^+e$ have the same Hilbert series, so $\Phi|_{\mathcal{A}^+}$ is an isomorphism. By Lemma \ref{localiczationDAHA}, the homomorphism $\Phi$ is a localization of the isomorphism $\Phi|_{\mathcal{A}^+}$, and thus it is also an isomorphism. 

In particular, the image under $\Phi$ of the basis given in Lemma \ref{PBW-A} is a basis of the spherical DAHA $e\mathbb{H}_{t,q}e$. 
\end{proof}

\section{Proof of Theorem \ref{HamiltonPresentation}}\label{PBW-DqG}

This section contains the proof of Theorem \ref{HamiltonPresentation}, which gives the presentation of the quantum Hamiltonian reduction $$\cD_q(GL_2)\dS_{\!\!\! \mathcal{I}_t} U=\left(\mathcal{D}_q(GL_2)\Big/\cD_q(GL_2)\cdot\mu_q(\mathcal{I}_t)\right)^U\cong \mathcal{D}_q(GL_2)^U\Big/\cD_q(GL_2)^U\cdot \mu_q(Z_t)$$ by generators and relations, and a PBW type basis for it. To this end, we first give a presentation of $\mathcal{D}_q(GL_2)^U$, in Proposition \ref{PresentationInvariants}.

For most of this section we allow $q,t$ formal, or $q,t\in \mathbb{C}^\times$ arbitrary. Lemma \ref{lemma-Hilb-DqU} requires the additional assumption that $q$ is not a nontrivial root of $1$. The spirit of the proof is very similar to the proof of Theorem \ref{thm-sDAHApresentation} given in Section \ref{sec-DAHAPResentation}.

\subsection{A presentation of $\mathcal{D}_q(GL_2)^U$}

The main result of this subsection, and the only part of it which is used in the rest of the paper, is the following proposition.
\begin{proposition}\label{PresentationInvariants}

Assume that $q$ is not a non-trivial root of unity.

\begin{enumerate}
\item Let $\mathcal{B}$ be the algebra with generators:
$$c_1,c_2^{\pm 1},d_1,d_2^{\pm1},r,w$$
and relations:\\
\begin{minipage}[t]{0.4\textwidth}
\begin{align}
c_2c_1&=c_1c_2 \label{c1c2} \\
d_2d_1&=d_1d_2  \label{d1d2} \\
d_2c_2&=q^{-4}c_2d_2 \label{d2c2} \\
d_2c_1&=q^{-2}c_1d_2\label{d2c1} \\
d_1c_2&=q^{-2}c_2d_1 \label{d1c2} \\
d_1c_1&=c_1d_1+(q^{-2}-1)r \label{d1c1} 
\end{align}
\end{minipage}
\begin{minipage}[t]{0.55\textwidth}
\begin{align}
d_2r&= q^{-2}rd_2 \label{d2r}\\
rc_2&= q^{-2}c_2r \label{rc2} \\
d_1r&=q^{-2}rd_1+(1-q^{-2})q^{-2}c_1d_2  \label{d1r} \\
rc_1&= q^{-2}c_1r+(1-q^{-2}q^{-2}c_2d_1 \label{rc1} \\
r^2&=q^{-4}w +(q^{-4}+q^{-6})c_2d_2 -\label{r2reln}\\
& \quad -q^{-4}c_2d_1^2- q^{-4}c_1^2d_2+q^{-2}c_1rd_1 \notag 
\end{align}
\end{minipage}\\
\begin{minipage}[t]{0.3\textwidth}
\begin{align}
wr&=rw \label{wr} \\
wc_1&=q^{-2}c_1w \label{wc1} \\
wc_2&=q^{-4}c_2w\label{wc2} \\
wd_1&=q^{2}d_1w \label{wd1} \\
wd_2&=q^{4}d_2w. \label{wd2} 
\end{align}
\end{minipage}
\medskip

There is an isomorphism of algebras $$\overline{\Psi}:\mathcal{B}\to \left(\mathcal{D}_q(GL_2)\right)^U$$ given on the generators by
\begin{align}
\begin{aligned}
\overline{\Psi}(c_1)&= \tr_q(A), \quad & \quad \overline{\Psi}(c_2)&=  {\det}_q(A), \\
\overline{\Psi}(d_1)&= \tr_q(D), \quad & \quad \overline{\Psi}(d_2)&=  {\det}_q(D), \\
\overline{\Psi}(r) & = q^2 \tr_q(DA), \quad & \quad \overline{\Psi}(w)&=\mathrm{tr}_q(D\tilde{A}\tilde{D}A). 
\end{aligned}
\label{inv-pres-iso}
\end{align}

\item Let $\mathcal{B}^+$ be the subalgebra of $\mathcal{B}$ generated by $c_1,c_2,d_1,d_2,r,w$. The isomorphism $\overline{\Psi}$ restricts to the isomorphism of algebras $$\overline{\Psi}|_{\mathcal{B}^+}:\mathcal{B}^+\to \left(\mathcal{D}_q^+(GL_2)\right)^U.$$
\end{enumerate}
\end{proposition}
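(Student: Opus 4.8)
The plan is to follow the same strategy used in Section~\ref{sec-DAHAPResentation} for the spherical DAHA: establish that $\overline{\Psi}$ is a well-defined graded homomorphism, prove it is injective via a PBW-type argument using the known PBW basis of $\cD_q^+(GL_2)$ from Proposition~\ref{Dq-PBW-prop}, and finally prove surjectivity onto the invariants by matching Hilbert series of finitely-generated graded pieces. Since $\mathcal{B}$ and $\mathcal{D}_q(GL_2)^U$ are obtained from $\mathcal{B}^+$ and $\cD_q^+(GL_2)^U$ by Ore localization at $c_2 d_2$ respectively at $\det_q(A)\det_q(D)$ (compare the inner-grading relations \eqref{grading-Dq-inner}), it suffices to prove part (2), that $\overline{\Psi}|_{\mathcal{B}^+}:\mathcal{B}^+\to (\cD_q^+(GL_2))^U$ is an isomorphism, and then deduce part (1) by localizing.

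First I would check that $\overline{\Psi}$ respects the relations. The elements $\tr_q(A),\det_q(A),\tr_q(D),\det_q(D),q^2\tr_q(DA)$ and $\tr_q(D\tilde A\tilde D A)$ are all $U$-invariant (for the first four this is Lemma~\ref{trdetarecenter} applied to each copy of $\cO_q(GL_2)$; for the latter two one checks invariance directly, or notes they arise from $U$-linear pairings), so the image lands in $(\cD_q^+(GL_2))^U$. The relations \eqref{c1c2}--\eqref{wd2} must then be verified by direct, if lengthy, computation from the defining relations \eqref{relns-aa}--\eqref{relns-da} of $\cD_q^+(GL_2)$ and the formulas \eqref{def-tilde} for the cofactor matrices; the $q$-commutation relations \eqref{d2c2}--\eqref{d1c2}, \eqref{d2r}--\eqref{rc2} and \eqref{wc1}--\eqref{wd2} follow cleanly from the inner $\ZZ^2$-grading \eqref{grading-Dq-inner}, so the substantive checks are \eqref{d1c1}, \eqref{d1r}, \eqref{rc1}, \eqref{r2reln} and \eqref{wr}. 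This gives the graded homomorphism $\overline{\Psi}$, with $\mathcal{B}$ graded by $\deg c_i=(i{-}1,\dots)$; more precisely $\deg(c_1)=(1,0)$, $\deg(c_2)=(2,0)$, $\deg(d_1)=(0,1)$, $\deg(d_2)=(0,2)$, $\deg(r)=(1,1)$, $\deg(w)=(2,2)$, matching the grading on $\cD_q(GL_2)$ from \eqref{grading-Dq}. One also needs a PBW basis for $\mathcal{B}^+$ itself; I would obtain this by the Diamond Lemma exactly as in Lemma~\ref{PBW-A}, using the relations to straighten any monomial into the form $c_1^{a_1}c_2^{a_2}w^{\gamma}r^{\epsilon}d_1^{b_1}d_2^{b_2}$, the only nontrivial overlaps again being those involving $r^2$ and $w$.

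For injectivity, I would take a linear dependence among the images $\overline{\Psi}(c_1^{a_1}c_2^{a_2}w^{\gamma}r^{\epsilon}d_1^{b_1}d_2^{b_2})$, reduce to a fixed bidegree $(M,N)$, and expand each image in the PBW basis of $\cD_q^+(GL_2)$ from Proposition~\ref{Dq-PBW-prop}(2). Tracking, say, the highest power of $\partial^2_2$ (or of $a^1_1$) appearing in each term should isolate leading coefficients and, by an elementary symmetric-function argument like the one separating $f$ and $g$ in Lemma~\ref{Phi-inj}, force all coefficients to vanish. Finally, for surjectivity I would compute $\mathrm{Hilb}_{\mathcal{B}^+}(u,v)$ from the PBW basis, and compute $\mathrm{Hilb}_{(\cD_q^+(GL_2))^U}(u,v)$ by the same character trick as in Lemma~\ref{Hilb-sDAHA}: the dimensions of the graded pieces of $\cD_q^+(GL_2)^U$ are polynomial in $q$, hence constant on the locus where $q$ is not a root of unity, so they can be computed at $q=1$ where $\cD_1^+(GL_2)=\cO(\mathfrak{gl}_2\times\mathfrak{gl}_2)$ and invariants are classical $GL_2$-invariants of a pair of matrices; the first and second fundamental theorems for $2\times2$ matrix invariants then yield the generators $\tr A,\det A,\tr D,\det D,\tr(DA),\tr(D\tilde A\tilde D A)$ with the single relation expressing $\tr(D\tilde A\tilde D A)$ and $\tr(DA)^2$ in terms of the others, matching \eqref{r2reln} in the classical limit. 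Matching Hilbert series then forces the injective graded map $\overline{\Psi}|_{\mathcal{B}^+}$ to be an isomorphism, and localizing at $c_2 d_2$ gives part (1).

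The main obstacle I expect is the verification that $\overline{\Psi}$ is a homomorphism — in particular checking relations \eqref{r2reln} and \eqref{wr}, which involve the quartic expression $\tr_q(D\tilde A\tilde D A)$ and require careful bookkeeping with the noncommutative cofactor matrices $\tilde A,\tilde D$ and the $\partial$-$a$ crossing relations \eqref{relns-da}. A secondary subtlety is making the character/specialization argument for $\mathrm{Hilb}_{(\cD_q^+(GL_2))^U}$ rigorous: one must confirm that $\cD_q^+(GL_2)$ is flat over $\mathbb{C}[q^{\pm1}]$ with $\cD_q^+(GL_2)/(q-1)\cong \cO(\mathfrak{gl}_2)\otimes\cO(\mathfrak{gl}_2)$ as $U$-modules (so that invariants specialize correctly) and that exactness of taking $U$-invariants — valid here since locally finite $U$-modules are semisimple, as in Lemma~\ref{ReduceToInvariants} — lets one pass the Hilbert series through the specialization, which is precisely where the hypothesis that $q$ is not a nontrivial root of unity enters.
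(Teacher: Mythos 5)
Your overall strategy matches the paper's: verify $\overline{\Psi}$ is a graded homomorphism, get a PBW basis for $\mathcal{B}^+$ via the Diamond Lemma, prove injectivity via leading terms in the PBW basis of $\cD_q^+(GL_2)$, prove surjectivity by matching Hilbert series after specializing to $q=1$, and localize. Two points deserve attention.

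\emph{Injectivity.} As written, your leading-term argument has a gap. The PBW monomials of $\mathcal{B}^+$ contain arbitrary powers $w^\gamma$, and $w\mapsto\tr_q(D\tilde A\tilde D A)$ is a quartic whose leading-term structure in the PBW basis of $\cD_q^+(GL_2)$ is not easy to isolate directly. The paper first performs a change of basis: using the relation \eqref{r2reln}, one shows that $c_1^{a_1}c_2^{a_2}r^\epsilon d_1^{b_1}d_2^{b_2}w^c$ equals a power of $q$ times $c_1^{a_1}c_2^{a_2}r^{2c+\epsilon}d_1^{b_1}d_2^{b_2}$ plus lower-order terms (ordered by $r$-degree), so that the set $\{c_1^{a_1}c_2^{a_2}r^d d_1^{b_1}d_2^{b_2}\}$ with $d\in\mathbb{N}_0$ unrestricted is also a basis, and the change of basis is unitriangular up to powers of $q$. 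It is only after this replacement that tracking a single generator works cleanly, and the paper tracks $a^1_2$ (absent from $\tr_q(A)$, degree one in $\det_q(A)$ and in $\tr_q(DA)$), then $\partial^1_2$. Your suggestion of tracking $a^1_1$ or $\partial^2_2$ would not separate terms well, since $a^1_1$ appears in all of $\tr_q(A)$, $\det_q(A)$, $\tr_q(DA)$ and $w$, so the ``highest power of $a^1_1$'' does not single out a subset of PBW monomials of $\mathcal{B}^+$. A related minor point: you claim the $w$-relations \eqref{wc1}--\eqref{wd2} all follow from the inner grading \eqref{grading-Dq-inner}, but only \eqref{wc2} and \eqref{wd2} do (those are conjugation by $\det_q(A)$, $\det_q(D)$); relations \eqref{wc1} and \eqref{wd1} require direct computation.

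\emph{Hilbert series at $q=1$.} Here your route is a legitimate, genuinely different one. You invoke the classical first and second fundamental theorems for $2\times 2$ matrix invariants (Sibirskii/Procesi), which identify $\cO(\mathfrak{gl}_2\times\mathfrak{gl}_2)^{GL_2}$ as the polynomial ring on $\tr A$, $\det A$, $\tr D$, $\det D$, $\tr(DA)$, whence the Hilbert series $\frac{1}{(1-u)(1-v)(1-u^2)(1-v^2)(1-uv)}$ falls out immediately, with $\tr(D\tilde A\tilde D A)$ entering redundantly via the classical limit of \eqref{r2reln}. The paper instead computes the multiplicity of the trivial representation in $S(V_0\oplus V_2)\otimes S(V_0\oplus V_2)$ via Clebsch--Gordan and a direct generating-function manipulation, which is more self-contained but less conceptual. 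Both correctly require the hypothesis that $q$ is not a nontrivial root of unity (so that multiplicities of irreducibles in $\cO_q^+$ are independent of $q$), and both need the exactness of invariants (semisimplicity) as in Lemma~\ref{ReduceToInvariants}, which you correctly flag.
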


The algebras $\mathcal{B}$ and  $\mathcal{B}^+$ are $\mathbb{Z}^2$-graded by 
\begin{align}
\begin{aligned}\label{grading-B}
\deg(c_1)&=(1,0)  \qquad \deg(d_1)=(0,1) \qquad \deg(w)=(2,2)\\
\deg(c_2)&=(2,0)\qquad \deg(d_2)=(0,2) \qquad \deg(r)=(1,1).
\end{aligned}
\end{align}
The multiplicative set generated by $c_2d_2$ and $w$ is $q$-central, and therefore Ore in $\mathcal{B}^+$, and $\mathcal{B}$ is a noncommutative localization of $\mathcal{B}^+$ by this set.

Let us first show that the map $\overline{\Psi}$ is indeed a homomorphism. 

\begin{lemma}\label{lemma-psi'-is-hom}
The elements 
$$\tr_q(A), {\det}_q(A), \tr_q(D),  {\det}_q(D), q^2 \tr_q(DA), \mathrm{tr}_q(D\tilde{A}\tilde{D}A),$$
which appear on the right hand sides of formulas \eqref{inv-pres-iso}, are $U$-invariant and satisfy defining relations \eqref{c1c2}-\eqref{wd2} of the algebra $\mathcal{B}$. Therefore, \eqref{inv-pres-iso} defines a graded algebra homomorphism $\overline{\Psi}: \mathcal{B}\to \left(\mathcal{D}_q(GL_2)\right)^U.$
\end{lemma}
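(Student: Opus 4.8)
The plan is to verify directly that the six explicit elements of $\cD_q(GL_2)$ on the right-hand side of \eqref{inv-pres-iso} are $U$-invariant, and then check that they satisfy the relations \eqref{c1c2}--\eqref{wd2}; once this is done, the universal property of the algebra $\mathcal{B}$ presented by generators and relations immediately produces the homomorphism $\overline{\Psi}$, and the grading statement is built into the degree assignments \eqref{grading-B} since each listed element is homogeneous of the prescribed bidegree (recall $\deg(a^i_j)=(1,0)$, $\deg(\partial^i_j)=(0,1)$ from \eqref{grading-Dq}).

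\emph{Invariance.} First I would show $U$-invariance of $\tr_q(A)$, ${\det}_q(A)$, $\tr_q(D)$, ${\det}_q(D)$. For $A$ this is Lemma \ref{trdetarecenter}, applied to the copy of $\cO_q(GL_2)$ embedded via $L\mapsto A$; for $D$ it is the same lemma applied to the copy embedded via $L\mapsto D$. Both embeddings are $U$-equivariant by the remark following Definition \ref{def-D_q}. For the remaining two elements, $q^2\tr_q(DA)$ and $\tr_q(D\widetilde A\widetilde D A)$, I would use the fact that $\cD_q(GL_2)$ is built from a $U$-linear map (Section 3 of \cite{J}), so that the matrix entries of $A$ and $D$ transform in the adjoint-type representation; the $q$-trace of a product of such ``matrix-coefficient'' blocks, formed using the correct insertions of the $q$-deformed identity (exactly as $\tr_q$ and ${\det}_q$ are defined in \eqref{deftrdet}), is then invariant because the $q$-trace pairing is a morphism of $U$-modules. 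Concretely, $\mu_q(L)=DA^{-1}D^{-1}A$ (Definition/Proposition \ref{def-muq}) is a matrix of elements transforming like $L$, so $\tr_q(\mu_q(L))$ is invariant by Lemma \ref{trdetarecenter}; one then rewrites $\tr_q(\mu_q(L))$, clearing the inverse determinants, in terms of $\tr_q(DA)$ and $\tr_q(D\widetilde A\widetilde D A)$, exhibiting the latter as invariant combinations. (Indeed the element $w$ is essentially the numerator appearing after multiplying the moment-map trace through by ${\det}_q(A){\det}_q(D)$.)

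\emph{Relations.} The bulk of the work is then the relation checks, all of which are finite computations inside $\cD^+_q(GL_2)$ using the straightening relations \eqref{relns-aa}, \eqref{relns-dd}, \eqref{relns-da} and the centrality/$q$-centrality properties of ${\det}_q(A)$ and ${\det}_q(D)$ from Proposition \ref{Dq-PBW-prop}(1). The commutation relations \eqref{c1c2}--\eqref{d1c2}, \eqref{d2r}, \eqref{rc2}, and \eqref{wc1}--\eqref{wd2} are cheapest: they follow from the $q$-centrality of ${\det}_q(A)$, ${\det}_q(D)$ and the inner grading \eqref{grading-Dq-inner}, since each factor $c_2={\det}_q(A)$, $d_2={\det}_q(D)$, $w$ $q$-commutes with any homogeneous element by a scalar determined by its bidegree. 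The genuinely substantive identities are \eqref{d1c1}, \eqref{d1r}, \eqref{rc1} and especially the quadratic relation \eqref{r2reln}, which must be obtained by expanding $(\tr_q(DA))^2$ against the PBW basis of Proposition \ref{Dq-PBW-prop}(2) and recognizing the result as the stated combination of $w$, $c_2d_2$, $c_2d_1^2$, $c_1^2d_2$ and $c_1rd_1$.

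\emph{Main obstacle.} The hard part will be the quadratic relation \eqref{r2reln}: it requires carefully expanding $r^2 = q^4(\tr_q(DA))^2$ using the cross relations \eqref{relns-da} to move all $\partial$'s past all $a$'s into PBW-normal form, and then matching coefficients against $w=\tr_q(D\widetilde A\widetilde D A)$ — which itself must be put into normal form — together with the lower-order correction terms. This is a substantial but mechanical computation; the identity \eqref{r2reln} (after specializing $w \mapsto (t^{-2}+q^{-2}t^2)c_2d_2$) matches the DAHA relation \eqref{R2reln} transported through $HC_{t,q}$, which serves as a strong consistency check on the bookkeeping. All other verifications are routine.
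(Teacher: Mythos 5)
Your proposal follows essentially the same strategy as the paper's proof: invariance via Lemma~\ref{trdetarecenter} applied to the relevant $U$-module embeddings, the $q$-central commutation relations read off immediately from \eqref{grading-Dq-inner}, and the remaining relations (\eqref{d1c1}, \eqref{d1r}, \eqref{rc1}, \eqref{r2reln}) verified by direct normal-form computation using \eqref{relns-aa}, \eqref{relns-dd}, \eqref{relns-da}. The paper's proof is exactly this, with \eqref{d1c1} written out as the illustrative example.

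One small imprecision in the invariance step: your "concretely" argument via $\mu_q$ does not actually produce $\tr_q(DA)$. Clearing inverse determinants from $\tr_q(\mu_q(L)) = \tr_q(DA^{-1}D^{-1}A) = (\det_q A)^{-1}(\det_q D)^{-1}\tr_q(D\widetilde A\widetilde D A)$ gives invariance of $w = \tr_q(D\widetilde A\widetilde D A)$ only, not of $r = q^2\tr_q(DA)$. For $r$ the paper instead asserts directly that $L\mapsto DA$ is itself a $U$-module homomorphism (so $DA$ transforms like $L$ and Lemma~\ref{trdetarecenter} applies), which is the clean statement of the informal idea you gesture at in your first sentence about ``matrix-coefficient blocks.'' Worth stating this explicitly, since your ``concretely'' passage as written would leave the invariance of $r$ unsupported.
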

\begin{proof}

The maps $L\mapsto A$, $L\mapsto D$, $L\mapsto DA$ and $L\mapsto \mu_q(L)$ are all homomorphisms of $U$-modules, so the $U$-invariance of these elements follows from Lemma \ref{trdetarecenter}.

Seeing that these elements satisfy relations \eqref{c1c2}-\eqref{wd2} is also a direct computation, using the explicit form of defining relations of $\cD_q(GL_2)$ given by \eqref{relns-aa}, \eqref{relns-dd} and \eqref{relns-da}. For instance, \eqref{grading-Dq-inner} immediately implies \eqref{c1c2}, \eqref{d1d2},  \eqref{d2c2}, \eqref{d2c1}, \eqref{d1c2}, \eqref{d2r}, \eqref{rc2}, \eqref{wc2}, \eqref{wd2}. 

For illustration, let us also prove \eqref{d1c1}. We claim that 
$$\overline{\Psi}(d_1)\overline{\Psi}(c_1)-\overline{\Psi}(c_1)\overline{\Psi}(d_1)=(q^{-2}-1)\overline{\Psi}(r).$$
The left hand side is equal to 
\begin{align*}
&\overline{\Psi}(d_1)\overline{\Psi}(c_1)-\overline{\Psi}(c_1)\overline{\Psi}(d_1) =\\
&\quad  \stackrel{\phantom{\eqref{relns-da}}}{=} \tr_q(D) \tr_q(A) -\tr_q(A) \tr_q(D) \\
& \quad \stackrel{\phantom{\eqref{relns-da}}}{=} (\partial^1_1+q^{-2}\partial^2_2)\, (a^1_1+q^{-2}a^2_2)- (a^1_1+q^{-2}a^2_2)\, (\partial^1_1+q^{-2}\partial^2_2)\\
& \quad \stackrel{\eqref{relns-da}}{=} (q^{-2}-1)\left(a^1_1\partial^1_1+(q^{-2}-1)a^1_1\partial^2_2+q^2a^1_2\partial^2_1+a^2_1\partial^1_2+(q^{-2}-1)a^2_2\partial^1_1+ \right.\\
& \quad\quad \quad \quad +\left. (1-q^{-2}+q^{-4})a^2_2\partial^2_2\right).
\end{align*}
 On the other hand, 
\begin{align}
\overline{\Psi}(r) &\stackrel{\phantom{\eqref{relns-da}}}{=}  q^2\tr_q(DA) \notag\\
&\stackrel{\phantom{\eqref{relns-da}}}{=} q^2(\partial^1_1 a^1_1+\partial^1_2 a^2_1)+(\partial^2_1 a^1_2+\partial^2_2 a^2_2)  \notag \\
&\stackrel{\eqref{relns-da}}{=}a^1_1\partial^1_1+(q^{-2}-1)a^1_1\partial^2_2+q^2a^1_2\partial^2_1+q^2a^1_2\partial^2_1+a^2_1\partial^1_2+ \notag \\
 & \quad\quad   +(q^{-2}-1)a^2_2\partial^1_1+ (1-q^{-2}+q^{-4})a^2_2\partial^2_2.\label{Psi'-r-explicit}
\end{align}
Comparing these two expressions proves \eqref{d1c1}. Relations \eqref{d1r}, \eqref{rc1}, \eqref{r2reln}, \eqref{wr}, \eqref{wc1} and \eqref{wd1} are proved by similar direct computations. 
\end{proof}

We now describe a PBW basis of the algebra $\mathcal{B}$ and the Hilbert series of $\mathcal{B}^+$.


\begin{lemma}\label{PBWauxB}
\begin{enumerate}
\item The set $$\{c_1^{a_1}c_2^{a_2} r^\epsilon d_1^{b_1}d_2^{b_2} w^c \,|\, a_1,b_1,c\in \mathbb{N}_0, a_2,b_2 \in \mathbb{Z},\epsilon \in \{0,1\}  \}$$
is a basis of $\mathcal{B}$.
\item The set $$\{c_1^{a_1}c_2^{a_2} r^\epsilon d_1^{b_1}d_2^{b_2} w^c \,|\, a_1,b_1,a_2,b_2,c\in \mathbb{N}_0,\epsilon \in \{0,1\}  \}$$
is a basis of $\mathcal{B^+}$.
\item The Hilbert series of the algebra $\mathcal{B^+}$ 
is given by
 $$\mathrm{Hilb}_{ \mathcal{B}^+}(u,v)=\frac{1}{(1-u)(1-v)(1-u^2)(1-v^2)(1-uv)}.
$$
\end{enumerate}
\end{lemma}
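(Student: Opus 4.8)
The plan is to apply the Diamond Lemma \cite{B} to the algebra $\mathcal{B}^+$, exactly as was done for $\mathcal{A}^+$ in Lemma \ref{PBW-A}, and then deduce parts (1) and (3) formally. First I would fix the straightening order: monomials in $c_1,c_2,d_1,d_2,r,w$ should be rewritten into the normal form $c_1^{a_1}c_2^{a_2}r^\epsilon d_1^{b_1}d_2^{b_2}w^c$, so each defining relation \eqref{c1c2}--\eqref{wd2} is read as a rule replacing a ``bad'' length-two (or, for \eqref{r2reln}, a length-two $r^2$) leading monomial by a combination of smaller ones. One must check that the ambiguities resolve: the relevant overlap monomials are those of the form $XYZ$ where both $XY$ and $YZ$ are reducible. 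Since $c_2$, $d_2$, and $w$ each $q$-commute with everything, any overlap involving two of these among $\{c_2,d_2,w\}$ is trivial, and the only substantive checks are the triples already appearing in Lemma \ref{PBW-A}'s list with $R$ and $P_1,Q_1$ replaced by $r, c_1, d_1$ --- namely $d_1 r c_1$, $d_1 r^2$, $r^2 c_1$, and $r^3$ --- together with the handful of new triples produced by the presence of $w$ in the right-hand side of \eqref{r2reln}, e.g. $d_1 r^2$ reduced via $d_1 r$ first versus via $r^2=\eqref{r2reln}$ first, where one must use \eqref{wd1}, \eqref{wc1}, \eqref{d1c1}, \eqref{d1r} to bring both results to normal form. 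I expect all of these to close by direct computation, the $w$-relations being tailored precisely so that the extra $q^{-4}w$ term in \eqref{r2reln} propagates consistently; this collection of overlap checks is the main (though routine) obstacle.

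Granting the Diamond Lemma hypotheses, part (2) follows immediately: the normal-form monomials $c_1^{a_1}c_2^{a_2}r^\epsilon d_1^{b_1}d_2^{b_2}w^c$ with $a_1,a_2,b_1,b_2,c\in\mathbb{N}_0$, $\epsilon\in\{0,1\}$ span $\mathcal{B}^+$ and are linearly independent. For part (1), I would invoke the localization statement recorded just after \eqref{grading-B}: the multiplicative set generated by $c_2d_2$ and $w$ is $q$-central, hence satisfies the Ore condition, and $\mathcal{B}$ is the corresponding Ore localization of $\mathcal{B}^+$; inverting $c_2$, $d_2$ (and $w$, which is already expressible via $r, c_i, d_i$ through \eqref{r2reln} up to invertible factors, so its inverse is not needed as a separate generator --- but in any case the Ore localization is flat) turns the $\mathbb{N}_0$-ranges on $a_2,b_2$ into $\mathbb{Z}$-ranges while leaving $a_1,b_1,c\ge 0$, giving the asserted basis of $\mathcal{B}$.

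Finally, for part (3) I would simply read off the bigraded Hilbert series from the normal-form basis of part (2), using the degrees \eqref{grading-B}: a basis element $c_1^{a_1}c_2^{a_2}r^\epsilon d_1^{b_1}d_2^{b_2}w^c$ has bidegree $(a_1+2a_2+\epsilon+2c,\ b_1+2b_2+\epsilon+2c)$, so
\begin{align*}
\mathrm{Hilb}_{\mathcal{B}^+}(u,v)
&=\sum_{\substack{a_1,a_2,b_1,b_2,c\ge 0\\ \epsilon\in\{0,1\}}} u^{a_1+2a_2+\epsilon+2c}\,v^{b_1+2b_2+\epsilon+2c}\\
&=\frac{1}{1-u}\cdot\frac{1}{1-u^2}\cdot\frac{1}{1-v}\cdot\frac{1}{1-v^2}\cdot\frac{1}{1-u^2v^2}\cdot(1+uv).
\end{align*}
One checks $(1+uv)/(1-u^2v^2)=1/(1-uv)$, which rearranges the product into the stated closed form $\dfrac{1}{(1-u)(1-v)(1-u^2)(1-v^2)(1-uv)}$.
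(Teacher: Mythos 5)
Your proposal is correct and follows essentially the same route as the paper: prove part (2) via the Diamond Lemma (the paper checks the same overlaps, with the substantive ones being $d_1rc_1$, $d_1r^2$, $r^2c_1$, $r^3$, exactly as you identify), deduce part (1) by Ore localization at $c_2$ and $d_2$, and read off part (3) from the normal-form basis and the grading \eqref{grading-B}. The only slight oddity is your parenthetical aside about $w$ in the localization step — it is unnecessary since $w$ is not inverted in $\mathcal{B}$ (note $c\in\mathbb{N}_0$ in part (1)) — but this does not affect the correctness of the argument.
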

\begin{proof}
\begin{enumerate}
\item The first claim follows from the second by localization at $c_2$ and $d_2$.
\item The second claim is an application of the Diamond Lemma from \cite{B}, similar to the proof of Lemma \ref{PBW-A}. Relations \eqref{c1c2}-\eqref{r2reln} are straightening relations, describing how to reorder monomials. In order to apply the Diamond Lemma, it remains to establish the straightening diamonds for the following 26 monomials: 
$$wd_2d_1, \,\, wd_2r, \,\, wd_2c_2, \,\, wd_2c_1, \,\, wd_1r, \,\, wd_1c_2, \,\, wd_1c_1, \,\, wrc_2, \,\, wrc_1, \,\,  wc_2c_1,  $$
$$d_2d_1r, \,\, d_2d_1c_2, \,\, d_2d_1c_1, \,\, d_2rc_2, \,\, d_2rc_1, \,\, d_2c_2c_1, \,\, d_1rc_2, \,\, r^2c_2, \,\, d_1c_2c_1, \,\,  rc_2c_1,  $$
$$wr^2, \,\, d_2r^2, \,\, d_1r^2, \,\, d_1rc_1, \,\, r^2c_1, \,\, r^3.$$

This is a direct calculation, very similar to that in the proof of Lemma \ref{PBW-A}.  As there, all but the final four monomials follow immediately from the fact that $c_2,d_2$ and $w$ $q$-commute with all generators.  For illustration, we include a proof that the straightening diamond for $r^2c_1$ holds. Straightening $r^2$ first, we find:
\begin{align*}
(r^2)c_1&\stackrel{\eqref{r2reln}}{=} (q^{-2}c_1rd_1- q^{-4}c_1^2d_2-q^{-4}c_2d_1^2+q^{-4}w +(q^{-4}+q^{-6})c_2d_2)\, c_1\\
&\stackrel{\substack{\eqref{d1c1} \\ \eqref{wc1} \\ \eqref{d2c1}}}{=} q^{-2}c_1rc_1d_1+(q^{-4}-q^{-2})c_1r^2- q^{-6}c_1^3d_2-q^{-4}c_2d_1c_1d_1-\\
& \quad  \quad  \quad -(q^{-6}-q^{-4})c_2d_1r+q^{-6}c_1w +(q^{-6}+q^{-8})c_2c_1d_2\\
&\stackrel{\substack{\eqref{rc1}\\ \eqref{r2reln} \\  \eqref{d1c1}\\ \eqref{d1r}}}{=}q^{-4}c_1^2rd_1+(q^{-4}-q^{-6})c_1c_2d_1^2
+(q^{-6}-q^{-4})c_1^2rd_1-\\
&\quad  \quad  \quad -(q^{-8}-q^{-6})c_1^3d_2-(q^{-8}-q^{-6})c_1c_2d_1^2+(q^{-8}-q^{-6})c_1w+\\
&\quad  \quad  \quad+(q^{-4}-q^{-2})(q^{-4}+q^{-6})c_1c_2d_2- q^{-6}c_1^3d_2-q^{-4}c_1c_2d_1^2-\\
& \quad  \quad  \quad -(q^{-6}-q^{-4}) c_2rd_1-(q^{-8}-q^{-6}) c_2rd_1  - \\
& \quad  \quad  \quad -(q^{-6}-q^{-4})(q^{-2}-q^{-4})c_1c_2d_2+q^{-6}c_1w +(q^{-6}+q^{-8})c_1c_2d_2\\
&\stackrel{\phantom{\eqref{r2reln}}}{=}q^{-6}c_1^2rd_1-q^{-8}c_1c_2d_1^2-q^{-8}c_1^3d_2+q^{-8}c_1w+(q^{-4}-q^{-8}) c_2rd_1 \\
&\quad  \quad  \quad+(2q^{-10}-q^{-8}+q^{-6})c_1c_2d_2.
\end{align*}
Straightening $rc_1$ first, we find:
\begin{align*}
r(rc_1)&\stackrel{\eqref{rc1}}{=} q^{-2}rc_1r+(q^{-2}-q^{-4})rc_2d_1\\
&\stackrel{\substack{\eqref{rc1} \\ \eqref{rc2}}}{=} q^{-4}c_1r^2+(q^{-4}-q^{-6})c_2d_1r +(q^{-4}-q^{-6})c_2rd_1\\
&\stackrel{\substack{ \eqref{r2reln} \\ \eqref{d1r}}}{=} q^{-6}c_1^2rd_1- q^{-8}c_1^3d_2-q^{-8}c_1c_2d_1^2+q^{-8}c_1w+(q^{-8}+q^{-10})c_1c_2d_2+\\
& \quad  \quad  \quad +(q^{-6}-q^{-8})c_2rd_1+(q^{-4}-q^{-6})(q^{-2}-q^{-4})c_1c_2d_2+(q^{-4}-q^{-6})c_2rd_1\\
&\stackrel{\substack{\eqref{r2reln}\\ \eqref{d1r}}}{=} q^{-6}c_1^2rd_1- q^{-8}c_1^3d_2-q^{-8}c_1c_2d_1^2+q^{-8}c_1w+(q^{-4}-q^{-8})c_2rd_1+\\
& \quad  \quad  \quad + (2q^{-10}-q^{-8}+q^{-6})c_1c_2d_2.
\end{align*}
By inspection, these two expressions agree. Similar computations show that straightening diamonds hold for the remaining expression; hence, by the Diamond Lemma, the given set of straightened monomials is indeed a PBW basis for $\mathcal{B}^+$.

\item Using 2), we calculate the Hilbert series of $\mathcal{B}^+$ as 
\begin{align*}
\mathrm{Hilb}_{ \mathcal{B}^+}(u,v)&=\sum_{\substack{a_1,b_1,a_2,b_2,c\ge 0 \\ \epsilon \in \{0,1\}}} u^{a_1+2a_2+\epsilon+2c}\, v^{b_1+2b_2+\epsilon+2c}\\
&=\frac{1+uv}{(1-u)(1-u^2)(1-v)(1-v^2)(1-u^2v^2)}\\
&=\frac{1}{(1-u)(1-v)(1-u^2)(1-v^2)(1-uv)}.
\end{align*}
\end{enumerate}
\end{proof}

Next, we show that $\overline{\Psi}|_{\mathcal{B}^+}$ is injective. 
\begin{lemma}\label{Dq-lin-ind}
The image of the set 
\begin{align}
\{c_1^{a_1}c_2^{a_2} r^\epsilon d_1^{b_1}d_2^{b_2} w^c \,|\, a_1,b_1,c\in \mathbb{N}_0,\, ,a_2,b_2 \in \mathbb{Z},\, \epsilon \in \{0,1\}  \} \label{basis-Dq-inv-try1}
\end{align}
under the map $\overline{\Psi}$ is linearly independent in $\left(\mathcal{D}_{q}(GL_2)\right)^{U}$. The map $\overline{\Psi}:\mathcal{B}\to \left(\mathcal{D}_{q}(GL_2)\right)^{U}$ is injective. 
\end{lemma}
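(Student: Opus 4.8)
The plan is to mimic the linear independence argument for the DAHA (Lemma \ref{Phi-inj}), but working inside the PBW basis of $\cD_q(GL_2)$ from Proposition \ref{Dq-PBW-prop}(2) rather than that of $\mathbb{H}_{t,q}$. First I would reduce to the positive part: by Lemma \ref{PBWauxB}(1) the set \eqref{basis-Dq-inv-try1} is obtained from the corresponding set in $\mathcal{B}^+$ by localizing at the $q$-central elements $c_2d_2$ and $w$, whose images $\det_q(A)\det_q(D)$ and $\tr_q(D\tilde A\tilde D A)$ are regular (the first by construction of $\cD_q(GL_2)$, the second since $\overline{\Psi}$ is a homomorphism into a domain-like localization); hence it suffices to prove that $\overline{\Psi}$ applied to the basis of $\mathcal{B}^+$ in Lemma \ref{PBWauxB}(2) is linearly independent in $(\cD^+_q(GL_2))^U$.

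Suppose then $\sum_{i\in I}\alpha_i\, \overline{\Psi}(c_1^{a_{i,1}}c_2^{a_{i,2}}r^{\epsilon_i}d_1^{b_{i,1}}d_2^{b_{i,2}}w^{c_i}) = 0$ with all $\alpha_i\neq 0$. Using the $\mathbb{Z}^2$-grading \eqref{grading-Dq}, I may assume all monomials have the same bidegree $(M,N)$, so that $a_{i,1}+2a_{i,2}+\epsilon_i+2c_i = M$ and $b_{i,1}+2b_{i,2}+\epsilon_i+2c_i = N$ for every $i$. Now expand each $\overline{\Psi}$-image in terms of the PBW basis of $\cD^+_q(GL_2)$. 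The key is to find a "leading term" functional that separates the monomials. Here $\tr_q(A) = a^1_1 + q^{-2}a^2_2$, $\det_q(A) = a^1_1a^2_2 - q^2 a^1_2a^2_1$, symmetrically for $D$, and $q^2\tr_q(DA)$, $\tr_q(D\tilde A\tilde D A)$ are explicit quadratic/quartic expressions (cf. \eqref{Psi'-r-explicit}). I would track the total degree in the $a$-variables versus $\partial$-variables — automatically fixed by the bigrading — and then, within fixed bidegree, the highest power of $a^1_1$ (equivalently, projection onto the subalgebra where $a^2_1 = a^2_2 = \partial^2_1 = \partial^2_2 = 0$, which is a quotient-compatible specialization since these generators span a coideal-like piece). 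Under such a specialization $\tr_q(A)\mapsto a^1_1$, $\det_q(A)\mapsto 0$, $\tr_q(D)\mapsto \partial^1_1$, $\det_q(D)\mapsto 0$, $q^2\tr_q(DA)\mapsto q^2\partial^1_1 a^1_1$, and $w\mapsto$ (its leading $a^1_1\partial^1_1$-type term); so only monomials with $a_{i,2}=b_{i,2}=0$ survive, and among those one reads off a relation purely in $a^1_1, \partial^1_1, r, w$ which, since these (images of) elements generate a much smaller, easily-understood subalgebra, forces a contradiction exactly as the $f + X_1 g = 0$ step did in Lemma \ref{Phi-inj}. Iterating/peeling off the $c_2, d_2$ powers by instead looking at the lowest order in the complementary variables recovers the general case.

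The main obstacle I anticipate is making the "leading term" bookkeeping precise: unlike the DAHA case, where $Y_1, Y_2$ commute and the symmetrization trick is clean, here the $a^i_j, \partial^i_j$ satisfy the tangled relations \eqref{relns-aa}--\eqref{relns-da}, so I must verify that the specialization killing $a^2_1, a^2_2, \partial^2_1, \partial^2_2$ (or whatever associated-graded / filtration I use) is genuinely an algebra map on the relevant subquotient, and that under it the images of $c_1, c_2, d_1, d_2, r, w$ have the claimed leading terms with the claimed highest $a^1_1$- (resp. $\partial^1_1$-)powers equal to $a_{i,1}$ (resp. $b_{i,1}$). Once that is in place, the linear algebra is the same two-line symmetrization-and-subtraction argument as before: symmetry of the surviving polynomials in the appropriate variables, subtract, conclude a factor like $(a^1_1 - \text{something})g = 0$, invoke the PBW basis of $\cD_q(GL_2)$ to get $g=0$, hence $f=0$, contradicting $\alpha_i\neq 0$. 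Injectivity of $\overline{\Psi}$ on all of $\mathcal{B}$ then follows since a localization of an injective graded map at a regular central set is injective.
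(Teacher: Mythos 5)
Your outline correctly reduces to the positive part $\mathcal{B}^+ \to (\cD_q^+(GL_2))^U$ and correctly restricts to a fixed bidegree, but the proposed specialization, killing $a^2_1, a^2_2, \partial^2_1, \partial^2_2$, does not give the separation you need, and the argument breaks at exactly the step you flag as the anticipated difficulty.

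\textbf{The specialization also kills $w$.} Under the quotient by the (two-sided) ideal generated by $a^2_1, a^2_2, \partial^2_1, \partial^2_2$, the matrices $\widetilde{D}A$ and $D\widetilde{A}$ both have their first column zeroed (every entry of $\widetilde{A}, \widetilde D$ except the $(1,2)$ and $(2,2)$ entries is an $a^2_\bullet$ or $\partial^2_\bullet$, and $A, D$ have zero second row), so $\tr_q(D\widetilde{A}\widetilde{D}A)\mapsto 0$. Equivalently, one sees this from \eqref{r2reln}: with $c_2,d_2\mapsto 0$, $c_1\mapsto a^1_1$, $d_1\mapsto\partial^1_1$, $r\mapsto a^1_1\partial^1_1$, and $\partial^1_1 a^1_1 = q^{-2}a^1_1\partial^1_1$ in the quotient, the relation forces $w\mapsto 0$. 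So your asserted ``leading $a^1_1\partial^1_1$-type term'' of $w$ does not exist, and the specialization kills $c_2$, $d_2$, and $w$ simultaneously.

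\textbf{The surviving terms are not separated.} After the specialization, only $\overline{\Psi}(c_1^{a_1}r^\epsilon d_1^{b_1})$ with $a_2=b_2=c=0$ can survive, and each such image is, after PBW reordering in the quotient, a scalar multiple of the \emph{single} monomial $(a^1_1)^{a_1+\epsilon}(\partial^1_1)^{b_1+\epsilon}=(a^1_1)^M(\partial^1_1)^N$. So within a fixed bidegree $(M,N)$, the $\epsilon=0$ and $\epsilon=1$ terms (and different values of $c$) all collapse to the same PBW monomial, and no conclusion about individual coefficients is possible. The ``peel off $c_2,d_2$ powers'' clause cannot rescue this, since the specialization has already destroyed the information tracking $a_2, b_2, c$ and $\epsilon$.

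\textbf{What actually works.} The paper's proof does two things differently, both necessary. First, it replaces the original set by $\{c_1^{a_1}c_2^{a_2} r^d d_1^{b_1}d_2^{b_2}\}$ with $d$ ranging over $\mathbb{N}_0$ and no $w$ at all, via the upper-triangular change of basis coming from \eqref{r2reln} (so $w$ is eliminated before any leading-term analysis). Second, and crucially, instead of specializing, it tracks the \emph{highest power of the off-diagonal generator $a^1_2$} within the PBW expansion. Both $\det_q(A) = a^1_1 a^2_2 - q^2 a^1_2 a^2_1$ and $q^2\tr_q(DA)$ (see \eqref{Psi'-r-explicit}) contribute exactly one $a^1_2$ to their leading term, while $\tr_q(A)$ contributes none, so the top $a^1_2$-power of $\overline{\Psi}(c_1^{a_1}c_2^{a_2}r^d d_1^{b_1}d_2^{b_2})$ equals $a_2 + d$, giving genuine separation. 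Extracting the coefficient of $(a^1_2)^N$ for $N=\max(a_2+d)$ then isolates a smaller, $(a_2,d)$-indexed family in the $\partial$-variables, and one repeats on the $\partial$-side. The directional choice of $a^1_2$ rather than $a^1_1$ is the heart of the argument: it picks out a generator that appears with controlled multiplicity in $\det_q(A)$ and $\tr_q(DA)$ but not in $\tr_q(A)$, which your diagonal specialization cannot do.
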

\begin{proof}
Let us consider the set
\begin{align}
\{c_1^{a_1}c_2^{a_2} r^d d_1^{b_1}d_2^{b_2}  \,|\, a_1,b_1,d\in \mathbb{N}_0,a_2,b_2 \in \mathbb{Z}\} \label{replacement-basis}
\end{align}
with a partial order on it given by the degree $d$ of $r$. Looking at the defining relations \eqref{c1c2}-\eqref{r2reln} of the algebra $\mathcal{B}$, we see that 
$$c_1^{a_1}c_2^{a_2} r^\epsilon d_1^{b_1}d_2^{b_2} w^c=q^{-2b_1-4b_2+4c}c_1^{a_1}c_2^{a_2} r^{2c+\epsilon} d_1^{b_1}d_2^{b_2}+\textrm{ lower order terms}.$$
In other words, the set \eqref{replacement-basis} is also a basis of $\mathcal{B}$, and the change of basis matrix between \eqref{basis-Dq-inv-try1} and \eqref{replacement-basis}  (in some order) is upper triangular with powers of $q$ on the diagonal. So, it is enough to prove that the image under $\overline{\Psi}$ of the set \eqref{replacement-basis} is linearly independent in $\left(\mathcal{D}_{q}(GL_2)\right)^{U}$.

Suppose, for the sake of contradiction, that there is a nontrivial linear combination of the images under $\overline{\Psi}$ of some elements of \eqref{replacement-basis}.
\begin{align}
\sum_{\substack{a_1,b_1,d\in \mathbb{N}_0 \\ a_2,b_2 \in \mathbb{Z} }} \alpha_{a_1,a_2,d, b_1,b_2} \, \overline{\Psi}(c_1^{a_1}c_2^{a_2}  r^d d_1^{b_1}d_2^{b_2})=0.
 \label{Dq-lin-ind-eq1}
\end{align}
By this assumption, the set
$$E=\{ {\bf e}=(a_1,a_2,d,b_1,b_2) | a_1,b_1,d\in \mathbb{N}_0, a_2,b_2 \in \mathbb{Z}, \alpha_{a_1,a_2,d, b_1,b_2} \ne 0 \}$$
is not empty. 
After multiplying \eqref{Dq-lin-ind-eq1} on the left by some power of $\overline{\Psi}(c_2)$ and on the right by some power of $\overline{\Psi}(d_2)$, we may assume that 
$$E=E\cap \mathbb{N}_0^5,$$ 
meaning that all exponents $a_2,b_2$ are nonnegative integers.  
Let $p_1,\ldots, p_5:E\to \mathbb{N}_0$ be projections from $E$ to the coordinates of $E$. 
We have 
\begin{align}
\sum_{{\bf e}\in E} \alpha_{{\bf e}} \, \mathrm{tr}_q(A)^{a_{1}}\, \mathrm{det}_q(A)^{a_{2}} \, (q^2\mathrm{tr}_q(DA))^{d}\mathrm{tr}_q(D)^{b_{1}}\, \mathrm{det}_q(D)^{b_{2}}  =0
 \label{Dq-lin-ind-eq2}
\end{align}

Let
$$N=\max \{ (p_2+p_3)({\bf e}) | {\bf e}\in E\}.$$
The set $(p_2+p_3)^{-1}(N)\subseteq E$ is nonempty. 
Putting \eqref{Dq-lin-ind-eq2} in PBW order using \eqref{deftrdet}, \eqref{Psi'-r-explicit} and \eqref{relns-aa}, we read off that the leading nonzero term of $a^1_2$ 
 is equal to 
 \begin{align*}
\sum_{{\bf e}\in (p_2+p_3)^{-1}(N)} \alpha_{{\bf e}} \, (a^1_1+q^{-2}a^2_2)^{a_{1}}(-q^2)^N (a^1_2)^N(a^2_1)^{a_2}(\partial^2_1)^d \mathrm{tr}_q(D)^{b_{1}}\, \mathrm{det}_q(D)^{b_{2}}  =0.
\end{align*}
This can be rewritten as 
\begin{align}
\sum_{{\bf e}\in (p_2+p_3)^{-1}(N)}\sum_{i=0}^{a_1} \alpha_{{\bf e}} {a_1 \choose i} q^{-2i+2di}  \, (a^1_1)^{a_1-i}(a^1_2)^N (a^2_1)^{a_2}  (a^2_2)^{i}(\partial^2_1)^d \mathrm{tr}_q(D)^{b_{1}}\, \mathrm{det}_q(D)^{b_{2}}  =0
\label{Dq-lin-ind-eq2.5} \end{align}

Fix any $(a_1,a_2,d,b_1,b_2) \in (p_2+p_3)^{-1}(N)$. Then the set 
$$p_1^{-1}(a_1)\cap p_2^{-1}(a_2) \cap p_3^{-1}(d)$$
is not empty. 
Using the PBW theorem for $\cD_{q}(GL_2)$, (Proposition \ref{Dq-PBW-prop}) and equation \eqref{Dq-lin-ind-eq2.5}, we see that 
\begin{align}
\sum_{{\bf e}\in p_1^{-1}(a_1)\cap p_2^{-1}(a_2) \cap p_3^{-1}(d)} \alpha_{{\bf e}} \, \mathrm{tr}_q(D)^{p_4({\bf e})}\, \mathrm{det}_q(D)^{p_5({\bf e})}=0.
\label{Dq-lin-ind-eq3}
\end{align}
Let $$M=\max \{p_5({\bf e}) | {\bf e}\in p_1^{-1}(a_1)\cap p_2^{-1}(a_2) \cap p_3^{-1}(d) \}.$$
The set 
$$p_1^{-1}(a_1)\cap p_2^{-1}(a_2) \cap p_3^{-1}(d)\cap p_5^{-1}(M)$$
is nonempty.
The leading $\partial^1_2$ term in \eqref{Dq-lin-ind-eq3} is then  
\begin{align}
\sum_{{\bf e}\in p_1^{-1}(a_1)\cap p_2^{-1}(a_2) \cap p_3^{-1}(d)\cap p_5^{-1}(M)} \alpha_{{\bf e}} \, \mathrm{tr}_q(D)^{p_4({\bf e})}\, (\partial^1_2)^{M}(\partial^2_1)^{M}=0.
\end{align}
From this it follows that 
$$\alpha_{{\bf e}}=0 \quad \textrm{for all } \quad {\bf e}\in p_1^{-1}(a_1)\cap p_2^{-1}(a_2) \cap p_3^{-1}(d)\cap p_5^{-1}(M)\subseteq E,$$
which contradicts the definition of the set $E$. 

So, the assumption that there exists a nontrivial linear combination of the images under $\overline{\Psi}$ of some elements of \eqref{replacement-basis} is wrong. This proves the lemma.

\end{proof}

We have now shown that the map $\overline{\Psi}$ from the statement of Proposition \ref{PresentationInvariants} is a graded injective homomorphism of algebras. To see that it is an isomorphism, we employ a dimension argument. The algebras $\mathcal{B}$ and $\mathcal{D}_q(GL_2)$ have infinite dimensional bigraded pieces, so we first deal with their subalgebras $\mathcal{B}^+$ and $\mathcal{D}_q^+(GL_2)$. 

\begin{lemma}\label{lemma-Hilb-DqU}
Assume that $q$ is not a nontrivial root of unity.  Then the Hilbert series of the algebra $\mathcal{D}_q^+(GL_2)^U$ is 
$$\mathrm{Hilb}_{ \mathcal{D}_q^+(GL_2)^U}(u,v)=\frac{1}{(1-u)(1-v)(1-u^2)(1-v^2)(1-uv)}.$$
\end{lemma}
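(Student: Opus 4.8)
The plan is to show that the bigraded dimensions of $\cD_q^+(GL_2)^U$ do not depend on $q$ among the allowed values, and then to read them off at the classical limit $q=1$, where the problem becomes the classical invariant theory of a pair of $2\times 2$ matrices.

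First I would invoke the PBW theorem for $\cD_q^+(GL_2)$ (Proposition \ref{Dq-PBW-prop}(2)): each bigraded piece $\cD_q^+(GL_2)[M,N]$ is finite dimensional, with a basis of ordered monomials in the $a^i_j$ and $\partial^i_j$ that is independent of $q$. Because $\Delta(K_m)=K_m\ot K_m$, every such monomial is a joint $K_1,K_2$-eigenvector, and by Lemma \ref{action-EFK-aij} its eigenvalues are $q$ raised to the integers obtained by summing the weights $(\delta_{i1}-\delta_{j1},\,\delta_{i2}-\delta_{j2})$ of its letters $a^i_j$, $\partial^i_j$. Consequently the $U$-character of $\cD_q^+(GL_2)[M,N]$ --- the generating function of its $K$-weight multiplicities --- is the same for all $q$, and it coincides with the $GL_2$-character of $\cO(\mathrm{Mat}_2\oplus\mathrm{Mat}_2)[M,N]$ under the simultaneous conjugation action, i.e.\ the $q=1$ specialization of $\cD_q^+(GL_2)$ with its $U$-action (clear from Lemma \ref{action-EFK-aij}; cf.\ Figure \ref{prism-diag}): on both sides this character is the sum over the same ordered monomials in the $a^i_j$, $\partial^i_j$ of bidegree $(M,N)$, graded by the same weights (matching up to the Weyl group, which does not affect the conclusion).

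Next I would use the hypothesis that $q$ is not a non-trivial root of unity: then the finite-dimensional $U_q\mathfrak{gl}_2$-module $\cD_q^+(GL_2)[M,N]$, on which $K_1,K_2$ act with eigenvalues in $q^\ZZ$, is completely reducible, as is the finite-dimensional $\mathfrak{gl}_2$-module $\cO(\mathrm{Mat}_2\oplus\mathrm{Mat}_2)[M,N]$. Since on either side the irreducible characters are linearly independent and a simple module has the same character as its counterpart under the evident identification of weight lattices and dominant weights, the equality of characters above forces the multiplicities of the trivial module to agree. As $K_1K_2$ acts by $q^0$ on every generator, hence trivially, taking $U$-invariants is the same as extracting the trivial isotypic multiplicity, so
\[
\dim\big(\cD_q^+(GL_2)^U[M,N]\big)=\dim\big(\cO(\mathrm{Mat}_2\oplus\mathrm{Mat}_2)^{GL_2}[M,N]\big)\qquad\text{for all }M,N,
\]
with $GL_2$ acting by simultaneous conjugation. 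It then remains to compute the bigraded Hilbert series of this classical invariant ring: by the first fundamental theorem for matrix invariants together with the Cayley--Hamilton identity in rank $2$, $\cO(\mathrm{Mat}_2\oplus\mathrm{Mat}_2)^{GL_2}$ is generated by $\tr(A)$, ${\det}(A)$, $\tr(D)$, ${\det}(D)$ and $\tr(AD)$; since a generic pair has commutant the scalars, the categorical quotient has dimension $8-3=5$, so these five generators are algebraically independent and the ring is a polynomial ring on generators of bidegrees $(1,0)$, $(2,0)$, $(0,1)$, $(0,2)$, $(1,1)$, with Hilbert series $\frac{1}{(1-u)(1-v)(1-u^2)(1-v^2)(1-uv)}$, as claimed. (The same series also follows directly from the Molien--Weyl integration formula, using that $\mathrm{Ad}$ acts on $\mathrm{Mat}_2\cong\mathfrak{gl}_2$ with eigenvalues $1,1,z_1/z_2,z_2/z_1$.)

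The crux is the comparison of trivial-multiplicities across different $q$: it genuinely requires complete reducibility of the finite-dimensional modules involved, and this is precisely where the hypothesis on $q$ enters --- at a non-trivial root of unity these modules need not be semisimple and, as in the Roots of unity remark, additional invariant vectors appear, so the Hilbert series would change. The remaining ingredients --- the $q$-independence of the PBW basis and of the weights, and the classical invariant-theory computation --- are routine.
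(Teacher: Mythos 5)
Your proposal is correct, and the overall strategy is the same as the paper's: reduce the computation to $q=1$ by a $q$-independence argument and then compute the classical Hilbert series. The details of each step differ enough to be worth comparing. For the $q$-independence, the paper simply asserts that the multiplicities of irreducibles in $\cO_q^+(GL_2)$ are the same for generic $q$ as for $q=1$, whereas you fill this in properly: the PBW basis (Proposition~\ref{Dq-PBW-prop}) gives a $q$-independent weight-multiplicity generating function on each bigraded piece, complete reducibility at non-root-of-unity $q$ together with the matching of irreducible characters then forces the trivial-isotypic multiplicities to agree with the $q=1$ case, and you correctly point to the root-of-unity failure as exactly the failure of semisimplicity. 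That is a more careful treatment of the step where the hypothesis on $q$ is actually used. For the classical computation, the two approaches genuinely diverge: the paper identifies $\cO_1^+(GL_2)\cong S(V_0\oplus V_2)$ as a graded $\mathfrak{gl}_2$-module, expands $S(V_2)$ into irreducibles, and reads off the trivial multiplicity in the tensor square via a Grothendieck-ring bookkeeping, arriving at the Hilbert series by a power-series summation. You instead invoke the first fundamental theorem of invariants for a pair of $2\times2$ matrices under simultaneous conjugation (together with the Cayley--Hamilton reduction of $\tr(A^2)$, $\tr(D^2)$ to $\det$'s), count $\dim=8-3=5$ to conclude the five generators $\tr A,\det A,\tr D,\det D,\tr(AD)$ of bidegrees $(1,0),(2,0),(0,1),(0,2),(1,1)$ are algebraically independent, and read off the Hilbert series of a polynomial ring. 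Your route is shorter and more structural, identifying the invariant ring itself rather than just its dimensions, but it leans on a deeper external theorem; the paper's route is more elementary and self-contained, and perhaps more in the spirit of the explicit computations elsewhere in the paper. Both are valid.
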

\begin{proof}
As a bigraded $U$-module, $\mathcal{D}_q^+(GL_2)$ is isomorphic to $\cO^+_q(GL_2)\otimes \cO^+_q(GL_2)$. In order to find the invariants, we will decompose the representation $\cO^+_q(GL_2)$ of $U$ into irreducible direct summands. The multiplicities of irreducible modules in this decomposition is the same for a variable $q$, for all $q \in \mathbb{C}^\times$ which are not roots of unity, and for $q=1$. (For $q\ne 1$ such that $q^n=1$, there are genuinely more invariants.) In particular, 
$$\mathrm{Hilb}_{ \mathcal{D}_q^+(GL_2)^U}(u,v)=\mathrm{Hilb}_{ \mathcal{D}_{1}^+(GL_2)^{U(\mathfrak{gl}_2)}}(u,v).$$
We will calculate this Hilbert series at $q=1$. 

At $q=1$, the algebra $U=U(\mathfrak{gl}_2)$ is the universal enveloping algebra of $\mathfrak{gl}_2$. Let $V_n$ denote the irreducible representation of $U(\mathfrak{gl}_2)$ which factors through the quotient $U(\mathfrak{gl}_2)\to U(\mathfrak{sl}_2)$ and whose highest weight is $n$. We have $\dim V_n=n+1$. Then $$\mathrm{span}\{a^i_j \, | \,  i,j\in \{1,2\} \}\cong V_0\oplus V_2$$ as $U(\mathfrak{gl}_2)$ modules.
For any $U(\mathfrak{gl}_2)$ module $V$, let $S(V)$ denote the symmetric algebra on $V$ with the obvious grading. There is an isomorphism of graded $U(\mathfrak{gl}_2)$ modules \begin{align}\label{Groth-SV0+SV2}
\cO^+_{1}(GL_2)=\mathbb{C}[a^i_j \, | \,  i,j\in \{1,2\} ]\cong S(V_0\oplus V_2)\cong S(V_0)\otimes S(V_2).
\end{align}

Let us record the decomposition of graded pieces into irreducible direct summands as a one-variable Grothendieck group expression. In that language, 
\begin{align}\label{Groth-SV0}
[S(V_0)]=\sum_{k\ge 0} [V_0] u^k.
\end{align}
Using weights, we decompose the symmetric algebra on $V_2$ as a direct sum of irreducible representations, and similarly get that
\begin{align}\label{Groth-SV2}
[S(V_2)]=\sum_{n\ge 0} [\oplus_{i\ge 0}V_{2n-4i} ] u^n=\sum_{n\ge 0}\sum_{i\ge 0} [V_{2n-4i} ] u^n.
\end{align}
Combining \eqref{Groth-SV0+SV2}, \eqref{Groth-SV0} and \eqref{Groth-SV2}, we get 
\begin{align}
[\cO^+_{1}(GL_2)]&=\left( \sum_{k\ge 0} [V_0] u^k \right) \left( \sum_{n\ge 0}\sum_{i\ge 0} [V_{2n-4i} ] u^n\right) = \sum_{k\ge 0}\sum_{n\ge 0}\sum_{i\ge 0} [V_{2n-4i}] u^{n+k}. \label{Groth-O_1}
\end{align}

We are interested in the multiplicity of $[V_0]$ in $\cO^+_{1}(GL_2)\otimes \cO^+_{1}(GL_2)$. For that purpose, recall that 
$$V_m\otimes V_n\cong V_{m+n}\oplus V_{m+n-2}\oplus\cdots \oplus V_{|m-n|}.$$
The trivial representation $V_0$ appears as a summand in the decomposition of $V_m\otimes V_n$ if and only if $m=n$, and in that case it appears with multiplicity $1$. So, 
\begin{align*}
[\cO^+_{1}(GL_2)\otimes \cO^+_{1}(GL_2)]&=\sum_{k\ge 0}\sum_{n\ge 0}\sum_{i\ge 0} [V_{2n-4i}] u^{n+k}\cdot \sum_{l\ge 0}\sum_{m\ge 0}\sum_{j\ge 0} [V_{2m-4j}] v^{m+l}
\end{align*}
and 
\begin{align*}
[(\cO^+_{1}(GL_2)\otimes \cO^+_{1}(GL_2))^{U(\mathfrak{gl}_2)}]&=\sum_{\substack{k\ge 0 \\ n\ge 0 \\ 0\le i \le n/2}}\sum_{\substack{l\ge 0 \\ m\ge 0 \\ 0\le j \le m/2}} [(V_{2n-4i} \otimes V_{2m-4j})^{U(\mathfrak{gl}_2)}]  u^{n+k}v^{m+l}\\
&=[V_0] \sum_{\substack{k\ge 0 \\ n\ge 0 \\ 0\le i \le n/2}}\sum_{\substack{l\ge 0 \\ m\ge 0 \\ 0\le j \le m/2}}\delta_{2n-4i=2m-4j}  u^{n+k}v^{m+l}.
\end{align*}
The Hilbert series of the space of invariants is thus
\begin{align*}
\mathrm{Hilb}_{ \mathcal{D}_{1}^+(GL_2)^{U(\mathfrak{gl}_2)}}(u,v)&=\sum_{\substack{k\ge 0 \\ n\ge 0 \\ 0\le i \le n/2}}\sum_{\substack{l\ge 0 \\ m\ge 0 \\ 0\le j \le m/2}}\delta_{2n-4i=2m-4j}  u^{n+k}v^{m+l}\\
&=\frac{1}{(1-u)(1-v)}\sum_{ i\ge 0}\sum_{j\ge 0}\sum_{ n\ge 2i} \sum_{ m\ge 2j}\delta_{2n-4i=2m-4j}  u^{n}v^{m}\\
&=\frac{1}{(1-u)(1-v)}\sum_{ i\ge 0}\sum_{j\ge 0}\sum_{ n\ge 2i}  u^{n}v^{n-2i+2j}\\
&=\frac{1}{(1-u)(1-v)}\sum_{ i\ge 0}\sum_{j\ge 0}\sum_{n\ge 0}  u^{n+2i}v^{n+2j}\\
&=\frac{1}{(1-u)(1-v)(1-u^2)(1-v^2)(1-uv)}.
\end{align*}
\end{proof}

We are now ready to prove the main statement of this subsection.

 \begin{proof}[Proof of Proposition \ref{PresentationInvariants}] 

Assume that $q,t \neq 0,$ and that $q$ is not a nontrivial root of unity. 

The map $\overline{\Psi}$, defined on generators of $\mathcal{B}$ in \eqref{inv-pres-iso} extends uniquely to a graded homomorphism of algebras $\overline{\Psi}:\mathcal{B}\to (\cD_q(GL_2))^U$ by Lemma \ref{lemma-psi'-is-hom}. By Lemma \ref{PBWauxB} and Lemma \ref{Dq-lin-ind}, the image of a basis in $\mathcal{B}$ is linearly independent in $(\cD_q(GL_2))^U$, so the map $\overline{\Psi}$ is injective. The restriction of $\overline{\Psi}$ to $\mathcal{B}^+$ is thus also a graded injective homomorphism of algebras $\overline{\Psi}|_{\mathcal{B}^+}:{\mathcal{B}^+} \to (\cD_{q}^+(GL_2))^U$. The Hilbert series of ${\mathcal{B}^+}$ and $(\cD_{q}^+(GL_2))^U$ given in Lemmas \ref{PBWauxB} and \ref{lemma-Hilb-DqU} coincide, so $\overline{\Psi}|_{\mathcal{B}^+}\to (\cD_{q}^+(GL_2))^U$ is an isomorphism. The algebra $\mathcal{B}$ is a localization of $\mathcal{B}^+$ by the Ore set generated by $c_2d_2$, the algebra $ (\cD_q(GL_2))^U$ is a localization of $(\cD_{q}^+(GL_2))^U$ by the Ore set generated by ${\det}_q(A){\det}_q(D)=\overline{\Psi}(c_2d_2)$, and the map $\overline{\Psi}$ is a localization of the map $\overline{\Psi}|_{\mathcal{B}^+}$. So, $\overline{\Psi}:\mathcal{B}\to (\cD_q(GL_2))^U$ is an isomorphism of algebras.

\end{proof}

\subsection{A presentation of $\cD_q(GL_2)\dS_{\!\!\! \mathcal{I}_t} U$}

We now combine Lemma \ref{ReduceToInvariants} and Proposition \ref{PresentationInvariants}, to prove Theorem \ref{HamiltonPresentation}.

\begin{proof}[Proof of Theorem \ref{HamiltonPresentation}] 

By Lemma \ref{ReduceToInvariants} and Proposition \ref{PresentationInvariants}, we have 
$$\mathcal{D}_q(GL_2)\dS_{\!\!\!\mathcal{I}_t} U\cong \mathcal{D}_q(GL_2)^U\Big/ \mathcal{D}_q(GL_2)^U \cdot \mu_q(Z_t)  \cong  \mathcal{B}\Big/ \mathcal{B}\cdot\overline{\Psi}^{-1}\left(\mu_q(Z_t) \right)$$

Let us first calculate the generator $\overline{\Psi}^{-1}\left(\mu_q(Z_t) \right)$ of this principal $\mathcal{B}$ ideal.
\begin{align*}
\overline{\Psi}^{-1}\left(\mu_q(Z_t) \right) 
&\stackrel{Def \ref{def-Zt}}{=} \overline{\Psi}^{-1}(\mu_q(\mathrm{tr}_q(L-q^4X_t)))\\
&\stackrel{Def \ref{def-muq}}{=}\overline{\Psi}^{-1}(\mathrm{tr}_q(DA^{-1}D^{-1}A-q^4X_t))\\
&\stackrel{\eqref{def-tilde}}{=}\overline{\Psi}^{-1}(\mathrm{tr}_q(D\, {\det}_q(A)^{-1}\tilde{A}\,  {\det}_q(D)^{-1}\tilde{D}A-q^4X_t))\\
&\stackrel{\phantom{\eqref{det-is-q-central}}}{=}\overline{\Psi}^{-1}( {\det}_q(A)^{-1} {\det}_q(D)^{-1} \mathrm{tr}_q(D\tilde{A}\tilde{D}A)-q^4 \mathrm{tr}_q(X_t))\\
&\stackrel{\eqref{inv-pres-iso}}{=}c_2^{-1}d_2^{-1}w-q^4(t^{-2} + q^{-2}t^2)\\
&\stackrel{\phantom{\eqref{inv-pres-iso}}}{=}c_2^{-1}d_2^{-1}\left (w-q^4(t^{-2} + q^{-2}t^2)d_2c_2\right)\\
&\stackrel{\eqref{d2c2}}{=}c_2^{-1}d_2^{-1}\left (w-(t^{-2} + q^{-2}t^2)c_2d_2\right).
\end{align*}

Thus $\mathcal{B}\cdot\overline{\Psi}^{-1}\left(\mu_q(Z_t) \right)$ is the principal left ideal in $\mathcal{B}$ generated by the element $c_2^{-1}d_2^{-1} (w-(t^{-2} + q^{-2}t^2)c_2d_2),$ or equivalently, by the element 
$w-(t^{-2} + q^{-2}t^2)c_2d_2$. A presentation of the quotient, $$\mathcal{B}\Big/\mathcal{B}\cdot(w-(t^{-2} + q^{-2}t^2)c_2d_2)$$ can be deduced from the presentation of the algebra $\mathcal{B}$ given in Proposition \ref{PresentationInvariants}, as follows: 
\begin{itemize}
\item Generators are: $c_1,c_2^{\pm 1},d_1,d_2^{\pm1},r$.
\item Relations are \eqref{c1c2}-\eqref{rc1}, and an extra relation obtained from \eqref{r2reln} as 
\begin{align}
r^2&=q^{-4}w +(q^{-4}+q^{-6})c_2d_2 -q^{-4}c_2d_1^2- q^{-4}c_1^2d_2+q^{-2}c_1rd_1\notag\\
&=q^{-4}(t^{-2} + q^{-2}t^2)c_2d_2 +(q^{-4}+q^{-6})c_2d_2 -q^{-4}c_2d_1^2- q^{-4}c_1^2d_2+q^{-2}c_1rd_1\notag\\
&=q^{-4}(1+t^2)(q^{-2}+t^{-2})c_2d_2-q^{-4}c_2d_1^2- q^{-4}c_1^2d_2+q^{-2}c_1rd_1 .\label{r2relnInQuot}
\end{align}
\end{itemize}

This proves that $\cD_q(GL_2)\dS_{\!\!\!\mathcal{I}_t}U$ is isomorphic to the algebra $\mathcal{B}\Big/\mathcal{B}\cdot(w-(t^{-2} + q^{-2}t^2)c_2d_2)$, whose presentation is stated in Theorem \ref{HamiltonPresentation}, with the isomorphism $\Psi$ induced by $\overline{\Psi}$.

\end{proof}

\end{document}